\renewcommand{\a}{\alpha}
\renewcommand{\b}{\beta}
\newcommand{\e}{\varepsilon}
\renewcommand{\l}{\lambda}
\renewcommand{\O}{\Omega}
\newcommand{\normeq}{\trianglelefteqslant}
\renewcommand{\to}{\rightarrow}
\newcommand{\leqs}{\leqslant}
\newcommand{\geqs}{\geqslant}
\newcommand{\vs}{\vspace{2mm}}
\newcommand{\what}{\widehat}
\newcommand{\imod}[1]{\allowbreak\mkern4mu({\operator@font mod}\,\,#1)}
\newtheorem{theorem}{Theorem} 
\newtheorem{theoremm}{Theorem}
\newtheorem{corr}[theoremm]{Corollary}
\newtheorem*{conj*}{Conjecture}
\newtheorem{conj}[theorem]{Conjecture}
\newtheorem{thm}{Theorem}[section] 
\newtheorem{lem}[thm]{Lemma}
\theoremstyle{definition}
\newtheorem{rem}[thm]{Remark}
\newtheorem{remk}[theorem]{Remark}
\begin{document}

\author{Timothy C. Burness}
\address{T.C. Burness, School of Mathematics, University of Bristol, Bristol BS8 1UG, UK}
\email{t.burness@bristol.ac.uk}

\author{Hong Yi Huang}
\address{H.Y. Huang, School of Mathematics and Statistics, University of St Andrews, KY16 9SS, UK}
\curraddr{Department of Mathematics, Southern University of Science and Technology, Shenzhen 518055, Guangdong, P. R. China}
\email{11612012@mail.sustech.edu.cn}
 
\title[On the intersections of nilpotent subgroups in simple groups]{On the intersections of nilpotent subgroups \\ in simple groups} 

\begin{abstract}
Let $G$ be a finite group and let $H_p$ be a Sylow $p$-subgroup of $G$. A recent conjecture of Lisi and Sabatini asserts the existence of an element $x \in G$ such that $H_p \cap H_p^x$ is inclusion-minimal in the set $\{H_p \cap H_p^g \,:\, g \in G\}$ for every prime $p$. 
For a simple group $G$, in view of a theorem of Mazurov and Zenkov from 1996, the conjecture implies the existence of an element $x \in G$ with $H_p \cap H_p^x = 1$ for all $p$. In turn, this statement implies a conjecture of Vdovin from 2002, which asserts  that if $G$ is simple and $H$ is a nilpotent subgroup, then $H \cap H^x = 1$ for some $x \in G$. 

In this paper, we adopt a probabilistic approach to prove the Lisi-Sabatini conjecture for all non-alternating simple groups. By combining this with earlier work of Kurmazov on nilpotent subgroups of alternating groups, we complete the proof of Vdovin's conjecture. Moreover, by combining our proof with earlier work of Zenkov on alternating groups, we are able to establish a stronger form of Vdovin's conjecture: if $G$ is simple and $A,B$ are nilpotent subgroups, then $A \cap B^x = 1$ for some $x \in G$. To obtain these results, we study the probability that a random pair of Sylow $p$-subgroups in a simple group of Lie type intersect trivially, complementing recent work of Diaconis et al. and Eberhard on symmetric and alternating groups.
\end{abstract}

\date{\today}

\maketitle

\section{Introduction}\label{s:intro}

Let $G$ be a finite group and let $H$ be a nilpotent subgroup of $G$. By a theorem of Zenkov \cite{Z_3}, there exist elements $x,y \in G$ such that 
\[
H \cap H^x \cap H^y \leqs F(G),
\]
where $F(G)$ is the Fitting subgroup of $G$. This result is optimal in the sense that there exist examples $H < G$ with the property that the intersection of any two conjugates of $H$ is not contained in $F(G)$ (for instance, if $G = S_8$ and $H$ is a Sylow $2$-subgroup of $G$, then $H \cap H^x \ne 1$ for all $x \in G$). Zenkov's theorem is just one of many results in a substantial literature on the intersections of nilpotent subgroups of finite groups, which can be traced all the way back to work of Passman \cite{Pass} in the 1960s.

In this paper, we are interested in the special case where $G$ is a (non-abelian) finite simple group. Since the Fitting subgroup of $G$ is trivial, Zenkov's theorem implies that for any nilpotent subgroup $H$ we have
$H \cap H^x \cap H^y = 1$ for some $x,y \in G$. In fact, Vdovin has proposed the following conjecture, which is stated as Problem 15.40 in the \emph{Kourovka Notebook} \cite{Kou}. Here and throughout the paper, whenever we refer to a simple group, we implicitly assume the group is non-abelian. 

\begin{conj}[Vdovin \cite{Kou}, 2002]\label{c:nilp}
Let $G$ be a finite simple group and let $H$ be a nilpotent subgroup of $G$. Then $H \cap H^x = 1$ for some $x \in G$.
\end{conj}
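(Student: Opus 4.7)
My approach is to reduce the conjecture to a simultaneous statement about Sylow subgroups and then attack the latter by the probabilistic method. Since $H$ is nilpotent, it is the direct product $H = \prod_p H_p$ of its Sylow $p$-subgroups, and the same primary decomposition holds for every conjugate $H^x$. Writing any $g \in H \cap H^x$ in terms of its prime-power components, the uniqueness of this decomposition in both $H$ and $H^x$ forces each such component to lie in $H_p \cap H_p^x$, so $g = 1$ as soon as $H_p \cap H_p^x = 1$ for every $p$ dividing $|H|$. Since each $H_p$ is contained in some Sylow $p$-subgroup $P_p$ of $G$, it therefore suffices to find a single element $x \in G$ such that $P_p \cap P_p^x = 1$ for every prime $p$ dividing $|G|$. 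For a simple group $G$, this is precisely what the Lisi-Sabatini conjecture gives (using the Mazurov-Zenkov theorem to identify the inclusion-minimal intersection with the trivial subgroup), so the task reduces to proving Lisi-Sabatini for $G$.

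For Lisi-Sabatini I would take a probabilistic approach. Fix a Sylow $p$-subgroup $P_p$ for each prime $p$ dividing $|G|$ and, for $x$ chosen uniformly from $G$, estimate
$$\Pr[P_p \cap P_p^x \neq 1] \leqs \sum_{1 \neq h \in P_p} \Pr[h \in P_p^x] = \sum_{1 \neq h \in P_p} \frac{|h^G \cap P_p|}{|h^G|},$$
the equality being a standard double-counting computation. Grouped by $G$-conjugacy class, the right-hand sum is essentially one of squared fixed-point ratios for the conjugation action of $G$ on $G/N_G(P_p)$. If the corresponding total over all primes $p$ can be forced to be strictly less than $1$, a union bound produces the element $x$ required.

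The analysis then splits along the classification. Sporadic groups reduce to a finite computation essentially handled by Zenkov, and the alternating groups are covered by Kurmazov's direct proof of Vdovin (bypassing Lisi-Sabatini entirely, which in that family remains open in general). The bulk of the work, and the main obstacle, is therefore the groups of Lie type, where the probabilistic sums above must be controlled uniformly in the rank, the defining characteristic, and the auxiliary prime $p$. The defining-characteristic case is tractable via the Borel-subgroup description of $P_p$, but the cross-characteristic primes demand much sharper input from fixed-point-ratio theory for classical and exceptional groups, in the tradition of Liebeck-Saxl and Burness. The most delicate points are likely to be primes that divide $|G|$ to a small power---where class sizes are themselves small and the union-bound estimate is tightest---together with a handful of small-rank exceptions that will probably require \emph{ad hoc} or computer-assisted treatment.
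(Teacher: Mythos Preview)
Your plan is essentially identical to the paper's: the same reduction from nilpotent $H$ to a simultaneous Sylow statement (i.e.\ the simple-group form of Lisi--Sabatini), the same probabilistic union bound via the fixed-point-ratio sum $\sum_i |x_i^G\cap P_p|^2/|x_i^G|$, the same handling of the defining characteristic through the Borel subgroup, the same case split along the classification with Kurmazov covering alternating groups directly, and the same anticipation of computer checks for small cases. The paper carries this programme through with explicit bounds for each family of Lie type (and does its own sporadic computations rather than citing Zenkov), but the architecture is exactly what you describe.
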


This conjecture has been resolved for alternating groups \cite{Kurm} and sporadic groups \cite{Z_spor}. In addition, there are some partial results for certain low rank groups of Lie type (for example, see \cite{Z_L2} for the $2$-dimensional linear groups ${\rm L}_2(q)$, and \cite{Z_dim3} for ${\rm L}_3(q)$ and ${\rm U}_3(q)$). But in general, the conjecture remains open for groups of Lie type.

An important special case of Conjecture \ref{c:nilp} was established in earlier work of Mazurov and Zenkov \cite{ZM}.

\begin{theorem}[Mazurov \& Zenkov \cite{ZM}, 1996]\label{t:mz}
Let $G$ be a finite simple group and let $H$ be a Sylow $p$-subgroup of $G$. Then $H \cap H^x = 1$ for some $x \in G$.
\end{theorem}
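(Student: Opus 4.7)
The plan is to apply the classification of finite simple groups and handle each family in turn. We split into four cases: (i) alternating groups $A_n$; (ii) the $26$ sporadic simple groups; (iii) groups of Lie type in the defining characteristic (so $p$ equals the characteristic $\ell$ of the underlying field); and (iv) groups of Lie type in cross characteristic (where $p \ne \ell$).

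Cases (i)--(iii) I expect to be comparatively routine. For the sporadic groups, a prime-by-prime verification using the known Sylow and maximal subgroup structure (with a computer check as fallback) should suffice. For $A_n$, the Sylow $p$-subgroup $H$ has a standard iterated wreath product description on blocks of size $p^i$, and one can construct $x$ explicitly by permuting points so that the supports of the wreath factors are moved off themselves. Case (iii) follows immediately from the Bruhat decomposition: $H$ is conjugate to the unipotent radical $U$ of a Borel subgroup, and a lift $x = \dot{w}_0$ of the longest Weyl group element satisfies $U \cap U^{\dot{w}_0} = 1$.

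The main obstacle is case (iv). My approach would be probabilistic: writing
\[
Q(G,H) = \sum_{1 \ne h \in H} \frac{|h^G \cap H|}{|h^G|},
\]
a union bound on the set of $x \in G$ for which some nontrivial $h \in H$ lies in $H^x$ shows that if $Q(G,H) < 1$ then a random $x \in G$ satisfies $H \cap H^x = 1$ with positive probability. To bound $Q(G,H)$ one exploits that a Sylow $p$-subgroup in cross characteristic lies inside $N_G(T)$ for a suitable (possibly twisted) maximal torus $T$: this restricts which $G$-classes can meet $H$ and, via the reductive structure of centralisers of semisimple elements, yields upper bounds on $|C_G(h)|$ that are sufficiently strong when the rank of $G$ or the field size is large.

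The delicate point will be the finite list of low-rank, small-field configurations in case (iv) where the bound $Q(G,H) < 1$ genuinely fails. These residual groups would be handled by explicit construction of $x$, using the classification of conjugacy classes of maximal tori and the known fusion of $p$-elements in their normalisers, supplemented by direct computation in the small number of remaining groups. Assembling cases (i)--(iii) with the generic and exceptional analyses in (iv) then yields the theorem.
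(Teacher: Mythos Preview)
The paper does not give its own proof of this statement; it is quoted as a theorem of Mazurov and Zenkov, and the paper summarises their argument: Green's theorem that if a $p$-block of $G$ has defect group $D$ then $H \cap H^x = D$ for some $x$, combined with the Michler--Willems result that every finite simple group of Lie type has a $p$-block of defect zero for every prime $p$ dividing $|G|$. Your approach bypasses block theory entirely and is therefore a genuinely different route from the original.

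On the other hand, your strategy coincides with the \emph{new} proof that the paper obtains as a byproduct of Theorem~\ref{t:main1} (see Remark~\ref{r:proof}). The case split is the same: defining characteristic via the opposite Borel, sporadics by direct verification, cross characteristic via a probabilistic bound. Your quantity $Q(G,H)$, once regrouped by $G$-class, is exactly the paper's $\widehat{Q}_p(G)$ extended from order-$p$ classes to all nontrivial classes of $p$-elements; restricting to order $p$ suffices (a nontrivial $p$-group always contains an element of order $p$) and gives the sharper bound the paper actually uses. In practice the paper embeds $H_p$ not only in torus normalisers but in various imprimitive subgroups of type ${\rm GL}_1(q) \wr S_n$, ${\rm GU}_1(q) \wr S_n$, ${\rm Sp}_2(q) \wr S_\ell$, ${\rm O}_2^{\pm}(q) \wr S_\ell$, etc., in order to count $|x_i^G \cap H_p|$ class by class, and the residual small-rank small-field cases are dispatched with {\sc Magma}, exactly as you anticipate. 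The block-theoretic proof has the advantage of being uniform and structurally conceptual; the probabilistic proof is quantitative and yields density statements about the set of good $x$, which the original cannot.
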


In \cite{ZM}, the proof of Theorem \ref{t:mz} for groups of Lie type rests on a representation-theoretic result of Green \cite{Green}, which states that if a $p$-block for a finite group $G$ has defect group $D$ and $H$ is a Sylow $p$-subgroup of $G$, then $H \cap H^x = D$ for some $x \in G$. This can then be combined with later work of Michler \cite{Michler} and Willems \cite{Willems} in the 1980s, who showed that every finite simple group of Lie type has a $p$-block with trivial defect group, for every prime divisor $p$ of the order of the group.

\begin{remk}\label{r:as}
Recall that a finite group $G$ is \emph{almost simple} if $G_0 \normeq G \leqs {\rm Aut}(G_0)$ for some non-abelian simple group $G_0$, where we identify $G_0$ with its group of inner automorphisms. It turns out that Theorem \ref{t:mz} does not extend to almost simple groups and we refer the reader to our recent paper \cite{BH}, where we extend earlier work of Zenkov to determine all the pairs $(G,H)$, where $G$ is almost simple, $H$ is a Sylow $p$-subgroup and $H \cap H^x \ne 1$ for all $x \in G$. This of course includes the example highlighted above, where $G = S_8$ and $H$ is a Sylow $2$-subgroup.
\end{remk}

\begin{remk}\label{r:conn}
The statements of Conjecture \ref{c:nilp} and Theorem \ref{t:mz} can be rephrased in several different ways, providing natural connections to other well studied problems. 

\begin{itemize}\addtolength{\itemsep}{0.2\baselineskip}
\item[{\rm (a)}] Let $G$ be a finite group and let $H$ be a core-free subgroup, which allows us to view $G$ as a transitive permutation group $G \leqs {\rm Sym}(\O)$ on the set $\O = G/H$ of cosets of $H$ in $G$. Then the \emph{base size} of $G$, denoted $b(G,H)$, is defined to be the minimal size of a subset of $\O$ with trivial pointwise stabiliser in $G$. This is a fundamental and intensively studied invariant in permutation group theory, with an extensive literature stretching all the way back to the 19th century. In the language of bases, Zenkov's main theorem in \cite{Z_3} reveals that $b(G,H) \leqs 3$ for every finite transitive group $G$ with $F(G) = 1$ and nilpotent point stabiliser $H$. And similarly, Conjecture \ref{c:nilp} asserts that $b(G,H) = 2$ for every finite simple transitive group $G$ with a nilpotent point stabiliser $H \ne 1$.  

\item[{\rm (b)}] The previous observation connects with another well-known conjecture of Vdovin, which is stated as Problem 17.41(b) in \cite{Kou}. This asserts that $b(G,H) \leqs 5$ for every transitive permutation group $G$ with trivial soluble radical and soluble point stabiliser $H$ (the example $G = S_8$ and $H = S_4 \wr S_2$ shows that the purported bound is best possible). This conjecture is proved for primitive groups in \cite{Bur21}, and Vdovin gives a reduction of the general problem to almost simple groups in \cite{Vdovin}. Further work \cite{Bay1,B23} reduces the problem to groups of Lie type, and Baykalov \cite{Bay2,Bay3} has made recent progress towards a proof for the classical groups. But the general problem remains open, including the special case where $G$ is a simple group.

\item[{\rm (c)}] It is also worth highlighting a connection with the notion of the \emph{depth} of a subgroup $H$ of a finite group $G$, denoted $d_G(H)$, which was introduced in a 2011 paper by Boltje, Danz and K\"{u}lshammer \cite{BDK}. This is a positive integer defined in terms of the inclusion of complex group algebras $\mathbb{C}H \subseteq \mathbb{C}G$ and there has been a focus on studying the depth of subgroups of simple and almost simple groups (see \cite{B25} and the references therein). The relevant connection arises from the observation that if $H$ is core-free and $b(G,H) = 2$, then $d_G(H) = 3$ (see \cite[Theorem 6.9]{BKK}). So in the language of subgroup depth, Conjecture \ref{c:nilp} asserts that every non-trivial nilpotent subgroup of a simple group has depth $3$.
\end{itemize}
\end{remk}

A renewed interest in Conjecture \ref{c:nilp} stems from a recent paper of Lisi and Sabatini \cite{LS}, where they propose the following far reaching generalisation. In the statement, we write $\pi(G)$ for the set of distinct prime divisors of $|G|$.

\begin{conj}[Lisi \& Sabatini \cite{LS}, 2025]\label{c:ls_main}
Let $G$ be a finite group with $\pi(G) = \{p_1, \ldots, p_k\}$ and let $P_i$ be a Sylow $p_i$-subgroup of $G$. Then there exists an element $x \in G$ such that for each $i$, $P_i \cap P_i^x$ is inclusion-minimal in the set $\{P_i \cap P_i^g \,:\, g \in G\}$.
\end{conj}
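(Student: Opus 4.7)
The plan is to prove the conjecture for every non-abelian finite simple group $G$ that is not an alternating group, via a probabilistic approach that reduces the statement to a single simultaneous intersection condition. First note that when $G$ is simple, Theorem \ref{t:mz} asserts that for every prime $p \in \pi(G)$ there exists some $y \in G$ with $P_p \cap P_p^y = 1$. Hence the trivial subgroup lies in the set $\{P_p \cap P_p^g \,:\, g \in G\}$ for every $p$, so an element $P_p \cap P_p^x$ is inclusion-minimal in this set if and only if $P_p \cap P_p^x = 1$. Consequently the conjecture for simple $G$ is equivalent to the existence of a single $x \in G$ with $P_p \cap P_p^x = 1$ for \emph{every} $p \in \pi(G)$ simultaneously, and by a union bound it suffices to prove
\[
\sum_{p \in \pi(G)} \Pr_{x \in G}\bigl(P_p \cap P_p^x \ne 1\bigr) < 1,
\]
where $x$ is chosen uniformly from $G$.

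For a fixed Sylow $p$-subgroup $P$, a union bound over non-identity elements of $P$, grouped into $G$-conjugacy classes, yields the standard estimate
\[
\Pr_{x \in G}(P \cap P^x \ne 1) \leqs \sum_{C} \frac{|P \cap C|^2}{|C|},
\]
where $C$ ranges over the non-trivial $G$-conjugacy classes meeting $P$. This is a version of the fixed point ratio bound that drives the base size literature discussed in Remark \ref{r:conn}(a), and for each $p$ it reduces the problem to counting class elements inside $P$ and bounding the corresponding centraliser sizes.

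The bulk of the work is to produce upper bounds on this sum that are small enough to sum to less than $1$ over all of $\pi(G)$. For a group of Lie type $G$ over $\F$ in characteristic $r$, the strategy splits naturally. When $p = r$, the Sylow $P$ is (conjugate to) the unipotent radical of a Borel subgroup, and the sum is controlled by the well-understood unipotent class structure of $G$ together with Steinberg-type centraliser estimates. When $p \ne r$, the subgroup $P$ is comparatively small, the non-identity elements of $P$ are semisimple (of order prime to $r$), and rather crude centraliser bounds suffice. For the sporadic groups the remaining cases can be settled by finite computation, appealing to \cite{Z_spor} where possible and evaluating the sum above directly from character tables otherwise.

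The main obstacle will be the defining-characteristic contribution for groups of low rank or small $q$, where $|P|$ is comparable to $|G|^{1/2}$, many conjugacy classes contribute, and the naive estimate can fail to beat the reciprocal of $|\pi(G)|$. In those boundary cases the probabilistic bound will need to be sharpened, for instance by a refined union bound that isolates regular unipotent classes, by restricting attention to elements of a prescribed order, or by explicit verification for a finite list of small groups that resist the generic asymptotic analysis.
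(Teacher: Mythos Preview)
Your overall framework---reducing to $\sum_p Q_p(G) < 1$ via a union bound and then bounding each $Q_p(G)$ by the fixed-point-ratio sum $\widehat{Q}_p(G) = \sum_C |P \cap C|^2/|C|$---is exactly the paper's approach, and for the non-defining primes $p \ne r$ your outline matches what the paper does. The genuine gap is your treatment of the defining characteristic $p = r$. You propose to control $\widehat{Q}_r(G)$ through the unipotent class structure, but this upper bound is far too weak: already for $G = {\rm L}_2(q)$ with $q$ odd, the two non-trivial unipotent classes each have size $(q^2-1)/2$ and meet $H_r$ in $(q-1)/2$ elements, giving $\widehat{Q}_r(G) = (q-1)/(q+1)$, whereas in fact $Q_r(G) = 1/(q+1)$. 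Combined with the $p=2$ contribution (which is bounded away from zero; see Remark~\ref{r:class}(b)), the total exceeds $1$, and no refinement of the unipotent class count repairs this since the loss occurs in the passage $Q_r \leqs \widehat{Q}_r$ itself. Nor is the problem confined to low rank or small $q$ as you suggest: for ${\rm L}_n(q)$ with $q$ fixed and $n \to \infty$ one has $Q_r(G) \to 1$ (Remark~\ref{r:main3}), so there is essentially no slack and your diagnosis of where the obstacle lies is inverted.

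The paper avoids this entirely by \emph{not} using $\widehat{Q}_r(G)$. Instead it uses the opposite Borel: if $B = N_G(H_r)$ and $B^x$ is opposite to $B$, then $H_r \cap H_r^x = 1$ directly, so $N_G(H_r)$ has a regular orbit on $G/H_r$ and one obtains the sharp bound $Q_r(G) \leqs 1 - |H_r||N_G(H_r)|/|G|$ (in fact an equality, by \cite[Lemma~3.13]{Z_92p}). All of the work then goes into proving $\sum_{p \ne r} \widehat{Q}_p(G) < |H_r||N_G(H_r)|/|G|$, a purely semisimple calculation. This is also more delicate than ``rather crude centraliser bounds suffice'': the paper needs careful embeddings of $H_p$ into explicit wreath-product overgroups and a separate count of elements with small support $\nu(x)$, with the tightest cases arising for $p = 2$ in odd characteristic and for the primes dividing $q \pm 1$.
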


In \cite{LS}, the conjecture is proved for all metanilpotent groups of odd order, as well as all sufficiently large alternating and symmetric groups. However, the conjecture remains wide open in general; in particular, it is open for groups of odd order.

In view of Theorem \ref{t:mz}, Conjecture \ref{c:ls_main} takes the following striking form for simple groups. 

\begin{conj}\label{c:ls}
Let $G$ be a finite simple group and let $H_p$ be a Sylow $p$-subgroup of $G$. Then there exists an element $x \in G$ such that $H_p \cap H_p^x = 1$ for all $p \in \pi(G)$.
\end{conj}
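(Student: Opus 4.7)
My plan is to adopt a probabilistic approach in the spirit of recent work on base sizes for simple groups. For each prime $p \in \pi(G)$, fix a Sylow $p$-subgroup $H_p \leqs G$ and set
\[
Q_p(G) = \frac{|\{x \in G \,:\, H_p \cap H_p^x \ne 1\}|}{|G|}.
\]
By a union bound it suffices to prove that $\sum_{p \in \pi(G)} Q_p(G) < 1$, and a standard double-counting argument gives the fixed-point-ratio estimate
\[
Q_p(G) \;\leqs\; \sum_{h \in H_p \setminus \{1\}} \fpr(h, G/H_p) \;=\; \sum_{C} \frac{|H_p \cap C|^2}{|C|},
\]
where $C$ ranges over the non-trivial $G$-conjugacy classes of $p$-elements and $\fpr$ denotes the usual fixed point ratio. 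The task is therefore to bound fixed point ratios and the class-intersection numbers $|H_p \cap C|$ sharply enough to sum over $p$.

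For alternating and sporadic groups, the desired conclusion follows from Conjecture \ref{c:nilp} (already settled in those cases, as noted in the introduction), so the real work concerns groups of Lie type. Let $G$ be simple of Lie type over $\F$ with $q = \ell^a$. The natural split is between the defining prime $\ell$ and the cross-characteristic primes $p \ne \ell$, which I would estimate separately.

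For $p \ne \ell$, the Sylow subgroup $H_p$ is governed by the action of a Weyl group on a maximal torus of appropriate type. When $p$ is a primitive prime divisor of $q^i - 1$ for large $i$ (which accounts for most of $\pi(G)$), the group $H_p$ is cyclic and each non-identity element lies in a large $G$-class. Combining this structural description with well-developed fixed-point-ratio bounds for primitive actions of finite simple groups of Lie type (due to Liebeck--Saxl, Guralnick--Kantor, Burness, and others) should yield a power saving of the form $Q_p(G) = O(|G|^{-c})$ for some $c > 0$. Since $|\pi(G)| = O(\log |G|)$, the cumulative cross-characteristic contribution is then negligible when $|G|$ is sufficiently large.

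The main obstacle, as expected, is the defining-characteristic term $Q_\ell(G)$: here $H_\ell = U$ is a maximal unipotent subgroup of order roughly $|G|^{1/2}$, and the naive bound $|U \cap C|^2 / |C| \leqs |U|^2/|C|$ is far too weak. The plan is to use the Bruhat decomposition and the classification of unipotent classes (via Bala--Carter labels, Jordan form, etc.) to control $|U \cap C|$ class by class; the aim is to establish a uniform per-class bound of the shape $|U \cap C|^2/|C| = O(q^{-1})$, so that summing over the $O(1)$ unipotent classes in $G$ yields $Q_\ell(G) \to 0$ as $q \to \infty$. Together with the cross-characteristic estimate, this closes the inequality for all but a finite collection of small-rank or small-$q$ groups, which will be cleared by direct computation in \textsf{Magma}. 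The genuinely delicate part is pinning down exactly which groups fall outside the asymptotic regime and verifying those explicitly by computer.
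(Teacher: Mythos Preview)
There are two genuine gaps in your plan.

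First, the reduction for alternating and sporadic groups is incorrect. Vdovin's conjecture (Conjecture~\ref{c:nilp}) asserts only that for \emph{each} nilpotent subgroup $H$ there is some $x_H$ with $H \cap H^{x_H} = 1$; it does not produce a single element $x$ that works simultaneously for every Sylow subgroup. Conjecture~\ref{c:ls} is strictly stronger, and indeed the paper does \emph{not} claim it for all alternating groups: it is established in \cite{LS} only for $A_n$ with $n$ sufficiently large, and remains open for small $n$ (see Remark~\ref{r:main1}). So you cannot simply invoke the known cases of Conjecture~\ref{c:nilp} here.

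Second, and more seriously, your treatment of the defining characteristic cannot work. You aim for a per-class bound $|U \cap C|^2/|C| = O(q^{-1})$ and hence $\what{Q}_\ell(G) \to 0$, but this is false already for $G = {\rm L}_2(q)$: there $|U|=q$, the $q-1$ non-identity elements of $U$ lie in classes of total size $q^2-1$, and one computes $\what{Q}_r(G) = (q-1)/(q+1) \to 1$. More generally, the exact value $Q_r(G) = 1 - |H_r||N_G(H_r)|/|G|$ (see Remark~\ref{r:main3}(c)) tends to $1$ for fixed $q$ as the rank grows, so no estimate via $\what{Q}_r$ can give what you need, and the set of ``exceptional'' groups left for computer verification would be infinite (for instance all ${\rm L}_n(2)$). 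The paper sidesteps this entirely: rather than bounding $\what{Q}_r(G)$, it uses the opposite Borel subgroup to exhibit $x$ with $H_r \cap H_r^x = 1$, giving $Q_r(G) \leqs 1 - |H_r||N_G(H_r)|/|G|$ directly, and then proves that the cross-characteristic sum $\sum_{p \ne r} \what{Q}_p(G)$ is bounded above by $|H_r||N_G(H_r)|/|G|$. Your cross-characteristic analysis is broadly in line with the paper's, but the defining-characteristic term must be handled by this regular-orbit argument, not by fixed-point-ratio estimates on unipotent classes.
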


Since every finite nilpotent group is the direct product of its Sylow subgroups, we immediately observe that Conjecture \ref{c:ls} implies Conjecture \ref{c:nilp}. As noted above, Conjecture \ref{c:ls} is proved in \cite{LS} for all sufficiently large alternating groups. Our main goal in this paper is to prove it for all simple groups of Lie type and all simple sporadic groups, without any conditions on the order of the group.

\begin{theoremm}\label{t:main1}
Conjecture \ref{c:ls} is true for all non-alternating finite simple groups.
\end{theoremm}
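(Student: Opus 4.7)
The approach is probabilistic. For each prime $p \in \pi(G)$, set
\[
Q(G, H_p) = \frac{|\{x \in G : H_p \cap H_p^x \ne 1\}|}{|G|};
\]
by Theorem \ref{t:mz} we have $Q(G, H_p) < 1$, and it suffices to show
\[
\sum_{p \in \pi(G)} Q(G, H_p) < 1,
\]
for then the corresponding ``bad'' sets do not cover $G$, and any element in the complement simultaneously witnesses the conjecture. The standard starting estimate, obtained by a union bound over a common non-identity element of $H_p \cap H_p^x$, is
\[
Q(G, H_p) \leqs \sum_{1 \ne y \in H_p} \fpr(y, G/H_p) \;=\; \sum_{[y] \ne [1]} \frac{|y^G \cap H_p|^2}{|y^G|},
\]
where the second sum is over the non-identity $G$-conjugacy classes meeting $H_p$. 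Since $H_p \cap H_p^x$ is a $p$-group, we may further restrict to elements $y$ of order $p$, and the task reduces to bounding these sums and aggregating over primes.

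For the sporadic simple groups, the Sylow subgroups and the relevant conjugacy class data are accessible via the standard \textsc{Gap} and \textsc{Magma} character table libraries, so one can directly evaluate or bound each $Q(G, H_p)$ and verify the inequality; a small number of residual cases where this estimate is not sharp enough can be completed by exhibiting an explicit $x$, for instance via a short random search. For simple groups of Lie type the analysis splits according to whether $p$ coincides with the defining characteristic $p_0$. For cross-characteristic primes, $H_p$ sits (up to small wreath-product extensions) inside a maximal torus, its elements of order $p$ are semisimple with comparatively large centralisers, and sharp Liebeck--Shalev type fixed point ratio bounds show that $\sum_{p \ne p_0} Q(G, H_p)$ is comfortably less than $1$, indeed tending to $0$ as $|G| \to \infty$.

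The principal obstacle is the defining characteristic prime $p_0$. Here $H_{p_0}$ is the maximal unipotent radical of a Borel subgroup, so it is large; moreover, its central root elements have fixed point ratios on $G/H_{p_0}$ that can be close to $1$, so the crude union bound above is in danger of overshooting. The plan is to exploit the root subgroup filtration of $H_{p_0}$ and the Bruhat decomposition, combined with standard structural and centraliser data for unipotent conjugacy classes, to control $\sum_{y} |y^G \cap H_{p_0}|^2/|y^G|$ and keep the defining characteristic contribution within the remaining budget $1 - \sum_{p \ne p_0} Q(G, H_p)$. A short finite list of low-rank or small-$q$ groups of Lie type, where the generic probabilistic bounds fall short, would then be handled by direct computation, completing the proof.
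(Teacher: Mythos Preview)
Your overall probabilistic framework, the treatment of the sporadic groups, and the handling of the cross-characteristic primes all match the paper's approach. The gap is in the defining characteristic.

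You propose to control the contribution of the defining prime $r$ by estimating $\what{Q}_{r}(G) = \sum_{[y]} |y^G \cap H_{r}|^2/|y^G|$ over unipotent classes, using root-subgroup and Bruhat-decomposition data. This cannot work as stated, because $\what{Q}_{r}(G)$ can itself exceed $1$. Take $G = {\rm L}_3(2)$: there is a single class of involutions, of size $21$, and the upper unitriangular Sylow $2$-subgroup contains exactly $5$ of them, so $\what{Q}_2(G) = 25/21 > 1$. No cleverness in how you evaluate the sum will repair this, since the quantity you are bounding is already too large. More generally, for groups of large rank over a fixed field one has $Q_r(G) \to 1$ (see Remark~\ref{r:main3}), so even when $\what{Q}_r(G) < 1$ there is essentially no slack between $Q_r(G)$ and its union-bound overestimate, and your appeal to structural filtrations gives no mechanism for closing that gap.

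The paper bypasses unipotent elements entirely. If $B = N_G(H_r)$ is a Borel subgroup and $x$ is chosen so that $B^x$ is the opposite Borel, then $H_r \cap H_r^x = 1$ immediately. Hence $N_G(H_r)$ has a regular orbit on $G/H_r$, giving
\[
Q_r(G) \leqs 1 - \frac{|H_r||N_G(H_r)|}{|G|},
\]
and the whole problem reduces to proving $\sum_{p \ne r} \what{Q}_p(G) < |H_r||N_G(H_r)|/|G|$, which involves only semisimple elements. That single observation is what your proposal is missing.
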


By combining this result with the main theorem of \cite{Kurm}, we immediately obtain the following result, which resolves Vdovin's conjecture on nilpotent subgroups of simple groups.

\begin{corr}\label{c:main1}
Conjecture \ref{c:nilp} is true.
\end{corr}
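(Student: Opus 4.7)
The plan is to deduce Corollary~\ref{c:main1} almost immediately from Theorem~\ref{t:main1} together with Kurmazov's theorem in~\cite{Kurm}. Let $G$ be a finite simple group with nilpotent subgroup $H$. The alternating case is handled by~\cite{Kurm}, so I will concentrate on the non-alternating case. Writing $H = \prod_{p \in \pi(H)} H_p$ as the internal direct product of its Sylow subgroups, I will choose, for each $p \in \pi(H)$, a Sylow $p$-subgroup $P_p$ of $G$ containing $H_p$; for any remaining primes $p \in \pi(G) \setminus \pi(H)$ I will pick an arbitrary Sylow $p$-subgroup $P_p$ of $G$. Applying Theorem~\ref{t:main1} to this family of Sylow subgroups will produce an element $x \in G$ with $P_p \cap P_p^x = 1$ for every $p \in \pi(G)$.

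Next I will verify that the same $x$ satisfies $H \cap H^x = 1$. Set $K = H \cap H^x$. Since $K$ is a subgroup of the nilpotent group $H$ it is itself nilpotent, and hence equals the direct product of its own Sylow subgroups. Any $p$-element of $K$ lies both in $H$ and in $H^x$; as a $p$-element of the nilpotent group $H$ it must lie in the unique Sylow $p$-subgroup $H_p$ of $H$, and symmetrically it must lie in $H_p^x$. Every $p$-element of $K$ therefore belongs to $H_p \cap H_p^x \leqs P_p \cap P_p^x = 1$, so every Sylow subgroup of $K$ is trivial and $K = 1$, as required.

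All the genuine content is absorbed into Theorem~\ref{t:main1} and into~\cite{Kurm}; the derivation sketched above relies only on the classical structure theorem for finite nilpotent groups and on the fact that nilpotency is inherited by subgroups. Consequently I anticipate no substantive obstacle at this step. The key structural observation is that Conjecture~\ref{c:ls} supplies a single conjugator $x$ which simultaneously handles every prime in $\pi(G)$, while any nilpotent $H$ is rigidly prime-decomposable through its Sylow subgroups, so the simultaneous statement collapses onto any such $H$ and Vdovin's conjecture follows.
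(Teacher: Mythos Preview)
Your proposal is correct and follows exactly the approach the paper takes: the paper states that Corollary~\ref{c:main1} follows ``immediately'' by combining Theorem~\ref{t:main1} with the main theorem of~\cite{Kurm}, invoking the observation (made just before Conjecture~\ref{c:ls}) that Conjecture~\ref{c:ls} implies Conjecture~\ref{c:nilp} because every finite nilpotent group is the direct product of its Sylow subgroups. You have simply written out in full the short argument the paper leaves implicit.
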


And in view of the connections highlighted in Remark \ref{r:conn}, we get the following corollary, which may be of independent interest.

\begin{corr}\label{c:main2}
Let $G$ be a finite simple group. Then $b(G,H) = 2$ and $d_G(H) = 3$ for every non-trivial nilpotent subgroup $H$ of $G$. 
\end{corr}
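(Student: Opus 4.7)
The plan is a short deduction from Corollary C together with the two general facts about bases and subgroup depth recorded in Remark \ref{r:conn}, so there is essentially no new work. I would set up the hypotheses, translate Corollary C into the base-size language, and then invoke the depth-base-size implication.

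First, I would verify that, for any non-trivial nilpotent subgroup $H$ of a finite simple group $G$, the subgroup $H$ is proper (since $G$ is a non-abelian simple group, it is not nilpotent) and core-free (its normal core is a proper normal subgroup of $G$, hence trivial). Consequently $G$ acts faithfully and transitively on $\Omega = G/H$ with point stabiliser $H$, and the invariant $b(G,H)$ is well defined.

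Next, I would apply Corollary \ref{c:main1} to produce $x \in G$ with $H \cap H^x = 1$. As recalled in Remark \ref{r:conn}(a), every such intersection coincides with the pointwise stabiliser of a $2$-element subset of $\Omega$, so this trivial intersection gives $b(G,H) \leqs 2$. The matching lower bound $b(G,H) \geqs 2$ is immediate: every one-point stabiliser is a conjugate of $H$, which is non-trivial by hypothesis. Hence $b(G,H) = 2$.

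For the depth statement, I would appeal to \cite[Theorem 6.9]{BKK}, quoted in Remark \ref{r:conn}(c), which asserts $d_G(H) = 3$ whenever $H$ is core-free and $b(G,H) = 2$. Both hypotheses have just been verified, so this yields the second assertion. The only real obstacle to this corollary is concentrated in Corollary \ref{c:main1} itself; once Vdovin's conjecture is available, both claims follow by a formal translation.
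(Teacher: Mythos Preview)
Your proposal is correct and follows exactly the approach indicated in the paper: the corollary is stated immediately after Corollary~\ref{c:main1} with the remark that it follows ``in view of the connections highlighted in Remark~\ref{r:conn}'', and you have simply written out those connections (parts (a) and (c)) in detail. There is nothing to add.
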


\begin{remk}\label{r:main1}
As noted above, Lisi and Sabatini \cite[Theorem 1.4]{LS} prove Conjecture \ref{c:ls} for all sufficiently large alternating groups and their argument relies on recent asymptotic results due to Diaconis et al. \cite{D25} and Eberhard \cite{Eb}. So in view of Theorem \ref{t:main1}, we have a proof of Conjecture \ref{c:ls} for all simple groups, apart from the alternating groups $G = A_n$ with $n \leqs N$ for some unspecified constant $N$. We have verified the  conjecture computationally for all $G = A_n$ with $n \leqs 50$ (see Remark \ref{r:alt0}), but it remains an open problem to prove it for all alternating groups.
\end{remk}

Our proof of Theorem \ref{t:main1} involves a combination of probabilistic and computational methods. In particular, a key feature of the Lisi-Sabatini conjecture is that it allows us to use probabilistic techniques to solve problems about the intersections of nilpotent subgroups, which otherwise would require a detailed case-by-case analysis. This fact was also observed by Eberhard \cite[Remark 4]{Eb}.

In order to outline our approach, let $G$ be a non-alternating finite simple group and let $H_p$ be a Sylow $p$-subgroup of $G$ for each $p \in \pi(G)$.
%
%
Let 
\[
Q(G) = \frac{|\{ x \in G \,:\, \mbox{$H_p \cap H_p^x \ne 1$ for some $p \in \pi(G)$}\}|}{|G|}
\]
be the probability that a uniformly random element $x \in G$ does not satisfy the condition in Conjecture \ref{c:ls}. So our goal is to show that $Q(G)<1$. 

To do this, first observe that
\[
Q(G) \leqs \sum_{p \in \pi(G)} Q_p(G),
\]
where $Q_p(G)$ is the probability that $H_p \cap H_p^x \ne 1$ for a uniformly random $x \in G$. So it suffices to show that
\begin{equation}\label{e:qp}
\sum_{p \in \pi(G)} Q_p(G) < 1,
\end{equation}
which is the same strategy adopted by Lisi and Sabatini in their proof of Conjecture \ref{c:ls_main} for all sufficiently large alternating and symmetric groups. We will show that the inequality in \eqref{e:qp} holds for every non-alternating finite simple group, with the single exception of the classical group ${\rm U}_4(2) \cong {\rm PSp}_4(3)$ (see Theorems \ref{t:ex_main}, \ref{t:class_main} and \ref{t:spor}), noting that the conjecture for ${\rm U}_4(2)$ can be checked directly.

For each $p \in \pi(G)$, we may view $G$ as a transitive permutation group on $\O = G/H_p$, in which case $Q_p(G)$ coincides with the probability that two randomly chosen points in $\O$ do not form a base for $G$. In turn, this implies that
\[
Q_p(G) \leqs \sum_{i=1}^k |x_i^G| \cdot \left(\frac{|x_i^G \cap H_p|}{|x_i^G|}\right)^2 =: \what{Q}_p(G),
\]
where $\{x_1, \ldots, x_k\}$ is a complete set of representatives of the conjugacy classes in $G$ of elements of order $p$ (a more general version of this upper bound was first introduced by Liebeck and Shalev in their proof of \cite[Theorem 1.3]{LSh99}). 

In this way, the problem is essentially reduced to the derivation of an appropriate upper bound on $\what{Q}_p(G)$ for each $p \in \pi(G)$. To do this, we can  appeal to the extensive literature on conjugacy classes of prime order elements in simple groups, which reduces the problem further to estimating $|x_i^G \cap H_p|$. For most classes, we will show that the trivial upper bound $|x_i^G \cap H_p| < |H_p|$ is good enough. But further work is often required when $x_i^G$ is one of the smallest conjugacy classes of elements of order $p$ and in these cases we typically proceed by embedding $H_p$ in a specific subgroup $L$ of $G$, which allows us to work with the upper bound $|x_i^G \cap H_p| \leqs |x_i^G \cap L|$. Here the subgroup $L$ is chosen so that it is easier to count the appropriate number of elements of order $p$ in $L$ than in $H_p$ itself.

In the critical case where $G$ is a group of Lie type in characteristic $r$, the prime divisor $p=r$ requires special attention. Here $N_G(H_r)$ is a Borel subgroup of $G$ with unipotent radical $H_r$ and by considering the opposite Borel subgroup it is easy to see that $H_r \cap H_r^x = 1$ for some $x \in G$. In turn, this implies that 
\[ 
Q_r(G) \leqs 1-\frac{|H_r||N_G(H_r)|}{|G|}
\]
(see Lemma \ref{l:neww}). Then by setting $\pi'(G) = \pi(G) \setminus \{r\}$, it just remains to show that 
\[
\sum_{p \in \pi'(G)} \what{Q}_p(G) < \frac{|H_r||N_G(H_r)|}{|G|}
\]
and this is how we proceed. In particular, this approach allows us to focus entirely on semisimple elements of prime order. 

\begin{remk}\label{r:alt}
Let $G = A_n$ be an alternating group.
Here the main result is \cite[Proposition 4.1]{LS}, which in terms of the above notation gives $Q_2(G) \leqs 0.99$ and $Q_p(G) = O(n^{-1})$ for all large $n$ and all odd primes $p$ (in fact, the main theorem of \cite{Eb} reveals that $Q_2(G) \to 1-\frac{3}{2}e^{-1/2}$ as $n$ tends to infinity; see Theorem \ref{t:deb}(ii) below). Then an easy application of the Prime Number Theorem establishes the upper bound in \eqref{e:qp} for $n \gg 0$ and this completes the proof of Conjecture \ref{c:ls} for all sufficiently large alternating groups. Proving a non-asymptotic version of Conjecture \ref{c:ls} using a probabilistic approach seems to be a difficult problem and we intend to return to this in future work.
\end{remk}

The asymptotic behaviour of the probability $Q_p(G)$ is studied in recent papers by Diaconis et al. \cite{D25} and Eberhard \cite{Eb} in the special case where $G$ is a symmetric or alternating group, with several interesting applications highlighted in \cite{D25}. Here the main results can be summarised as follows (part (i) is \cite[Theorem 1.1(a)]{D25}, while (ii) is \cite[Theorem 1]{Eb}).

\begin{theorem}[Diaconis et al. \cite{D25}; Eberhard \cite{Eb}, 2025]\label{t:deb}
Let $p$ be a prime and let $G = S_n$ or $A_n$.
\begin{itemize}\addtolength{\itemsep}{0.2\baselineskip}
\item[{\rm (i)}] If $p>2$, then $Q_p(G) \to 0$ as $n \to \infty$.
\item[{\rm (ii)}] If $p = 2$, then $Q_p(G) \to 1-\a e^{-1/2}$ as $n \to \infty$, where $\a = 1$ if $G = S_n$ and $\a = \frac{3}{2}$ if $G = A_n$.
\end{itemize}
\end{theorem}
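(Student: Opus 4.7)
The approach is to apply the probabilistic framework developed in the introduction. Writing $Q_p = Q_p(G) = \mathbb{P}_g[H_p \cap H_p^g \ne 1]$ for $g$ uniform in $G$, and $\what{Q}_p = \sum_C |C \cap H_p|^2/|C|$ (sum over conjugacy classes $C$ of elements of order $p$), one has $Q_p \leqs \what{Q}_p$. This bound suffices for part (i) but falls short for part (ii), where $\what{Q}_2$ remains bounded away from zero (as the contribution from transpositions for $G = S_n$, or from double transpositions for $G = A_n$, shows).

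\textbf{Part (i), $p > 2$.} Since the Sylow $p$-subgroups of $S_n$ and $A_n$ coincide, it is enough to treat $G = S_n$. Order-$p$ conjugacy classes $C_k$ are indexed by cycle type $(p^k, 1^{n-kp})$ for $1 \leqs k \leqs \lfloor n/p \rfloor$, with $|C_k| = n!/(k!\,p^k\,(n-kp)!) = \Theta(n^{kp})$. Using the iterated-wreath-product description of $H_p$ corresponding to the base-$p$ expansion of $n$, I would obtain $|C_k \cap H_p| = O(n^k)$: every $p$-cycle in $H_p$ lives on a $p$-orbit at the bottom of the tree, contributing a factor of order $n/p$ per cycle. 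Hence $|C_k \cap H_p|^2/|C_k| = O(n^{k(2-p)})$, which is geometrically summable in $k$ for $p \geqs 3$, giving $\what{Q}_p = O(n^{2-p}) = o(1)$.

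\textbf{Part (ii), $p = 2$.} Here a direct asymptotic computation of $Q_2$ is required via a Poisson convergence argument keyed to the minimal involutions of $H_2$. Let $\mathcal{I} \subset H_2$ denote the bottom transpositions for $G = S_n$, and the minimal even involutions (products of two bottom transpositions of $H_2(S_n)$ lying in $H_2(A_n)$, or similar canonical generators) for $G = A_n$. Define $X_g = |\mathcal{I} \cap H_2^g|$. The plan has three steps: (a) compute $\mathbb{E}[X_g] \to \mu$ by summing single-element probabilities $\mathbb{P}(t \in H_2^g) = |t^G \cap H_2|/|t^G|$ over $t \in \mathcal{I}$; a short calculation gives $\mu = 1/2$ for $G = S_n$; (b) establish $X_g \Rightarrow \mathrm{Poisson}(\mu)$ by the factorial-moment method $\mathbb{E}[(X_g)_k] \to \mu^k$, which reduces to asymptotic independence of the events $\{t_i \in H_2^g\}$ for distinct $t_i \in \mathcal{I}$; and (c) show $\mathbb{P}(H_2 \cap H_2^g \ne 1,\, X_g = 0) = o(1)$, that is, that contributions from non-minimal intersection elements are asymptotically negligible. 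Combining these yields $Q_2(G) \to 1 - e^{-\mu}$. For $G = S_n$ the limit is $1 - e^{-1/2}$. For $G = A_n$ the analogous computation (with a more elaborate $\mathcal{I}$) produces the limit $1 - \tfrac{3}{2}e^{-1/2}$, where the factor $\tfrac{3}{2}$ reflects parity-induced correlations among minimal even involutions sharing common bottom transpositions.

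\textbf{Main obstacle.} The hard part is step (b), the factorial-moment convergence, together with step (c), and most acutely for $G = A_n$. In the $S_n$ case step (b) amounts to showing that the joint probability $\mathbb{P}(t_{i_1}, \ldots, t_{i_k} \in H_2^g)$ factorises asymptotically as a product of marginals; this is plausible from a random-coset heuristic but requires careful control of the orbits of $H_2$ under conjugation, and step (c) requires a non-trivial refinement because $\what{Q}_2$ restricted to non-minimal classes is itself bounded away from zero (one must show, more delicately, that the \emph{joint} event of having a non-minimal intersection element but no minimal one has vanishing probability). In the $A_n$ case, the extra factor $\tfrac{3}{2}$ signals that a naive Poissonisation misses systematic dependencies, and a careful combinatorial analysis of the wreath structure intersected with $A_n$ is needed to pin down the precise limit.
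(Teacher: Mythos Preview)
The paper does not prove this theorem. Theorem~\ref{t:deb} is stated as a quotation of external results: part~(i) is cited as \cite[Theorem~1.1(a)]{D25} and part~(ii) as \cite[Theorem~1]{Eb}, and the paper offers no argument for either part beyond those citations. So there is no ``paper's own proof'' to compare your proposal against.

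That said, a few remarks on your outline. For part~(i), your approach via $\what{Q}_p$ is in the right spirit and aligns with what the paper attributes to \cite[Proposition~4.1]{LS} in Remark~\ref{r:alt}, which records the bound $Q_p(G) = O(n^{-1})$ for odd $p$. Your estimate $|C_k \cap H_p| = O(n^k)$ is essentially correct but needs a careful inductive argument through the wreath-product levels, not just the bottom layer; $p$-cycles in $H_p$ can arise at any level of the tree, not only from bottom orbits.

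For part~(ii), you have correctly identified that $\what{Q}_2$ is useless here and that a direct distributional analysis is required. Your Poisson heuristic is plausible for $S_n$, but as you yourself flag, the $A_n$ limit $1 - \tfrac{3}{2}e^{-1/2}$ is not of the form $1 - e^{-\mu}$ for any $\mu$, so the random variable governing the limit cannot be Poisson. This is a genuine gap in your plan: the factor $\tfrac{3}{2}$ is not a ``Poisson with correlations'' correction but indicates a different limiting law (or a conditioning/inclusion--exclusion structure beyond a single Poisson variable). Eberhard's argument in \cite{Eb} presumably handles this, but your proposal as written does not, and you would need to consult that paper to see what the correct mechanism is.
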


Our probabilistic proof of Theorem \ref{t:main1} allows us to establish a strong version of Theorem \ref{t:deb} for groups of Lie type (see the asymptotic statements in Theorems \ref{t:ex_main} and \ref{t:class_main}).

\begin{theoremm}\label{t:main_asymptotic}
Let $G$ be a finite simple group of Lie type over a field $\mathbb{F}_q$ of  characteristic $r$ and set 
\[
\b(G) = \sum_{p \in \pi(G) \setminus \{r\}} Q_p(G), \;\; \gamma(G) = \sum_{p \in \pi(G) \setminus\{2,r\}} Q_p(G).
\]
Then $\gamma(G) \to 0$ as $|G| \to \infty$. In addition, if $G \ne {\rm L}_2(q)$ then $\b(G) \to 0$ as $|G| \to \infty$.
\end{theoremm}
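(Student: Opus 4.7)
The plan is to extract the asymptotic conclusions directly from the quantitative bounds used to prove Theorem~\ref{t:main1}. For every $p \in \pi(G) \setminus \{r\}$ the proof of Theorem~\ref{t:main1} furnishes an explicit upper bound on
\[
\what{Q}_p(G) = \sum_{i=1}^{k} \frac{|x_i^G \cap H_p|^2}{|x_i^G|},
\]
where $x_1, \dots, x_k$ represent the $G$-classes of elements of order $p$, and the task is to verify that each such bound tends to $0$ as $|G| \to \infty$ at a rate that survives summation over the $O(\log |G|)$ primes in $\pi(G) \setminus \{r\}$.

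I would first dispatch the exceptional groups of Lie type, where the Lie rank is bounded and so $|G| \to \infty$ forces $q \to \infty$ and $|\pi(G)| = O(\log q)$. Inspection of the case-by-case arguments behind Theorem~\ref{t:main1} shows that each semisimple class of prime order $p \ne r$ contributes at most $O(q^{-c})$ to $\what{Q}_p(G)$, for a uniform constant $c > 0$ depending only on the Lie type; summing at most $O(\log q)$ such bounds preserves the decay and gives $\b(G), \g(G) = O(q^{-c} \log q) \to 0$.

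The main effort is for the classical groups, where both the rank $n$ and the field size $q$ may be unbounded. Here I would revisit the stratification of semisimple classes of order $p$ used in the proof of Theorem~\ref{t:main1}: for the \emph{generic} classes the trivial bound $|x^G \cap H_p| \leqs |H_p|$ combined with the rapid growth of $|x^G|$ in $n$ and $q$ already yields $|H_p|^2/|x^G| \to 0$ at a uniform rate in $p$; for the small classes (those with large eigenspaces, or of transvection/reflection type) one uses the sharper embedding bounds on $|x^G \cap H_p|$ from the proof of Theorem~\ref{t:main1}, which provide geometric decay in $p$ or $q$ and therefore produce an absolutely convergent sum over $p$. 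Combining these contributions and summing over $p \in \pi(G) \setminus \{r\}$ yields an estimate that tends to $0$ in any regime in which $|G| \to \infty$.

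The main obstacle, as always, is the prime $p = 2$, and this is precisely why the exclusion of $G = {\rm L}_2(q)$ is necessary for the statement concerning $\b(G)$. For $G = {\rm L}_2(q)$ a Sylow $2$-subgroup is dihedral of order the $2$-part of $q^2-1$, while the unique class of involutions has size on the order of $q^2$, and a short calculation shows that $\what{Q}_2(G)$ is bounded below by a positive constant independent of $q$. For all other groups of Lie type, the centraliser of each involution is polynomially larger than $|H_2|^2$, so that the minimal involution classes (typically long-root or graph-type) contribute a vanishing amount to $\what{Q}_2(G)$, and the class-by-class analysis used for odd primes can be adapted with only minor modifications. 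Since the prime $2$ is removed from the sum defining $\g(G)$, no such exclusion is needed for the corresponding asymptotic.
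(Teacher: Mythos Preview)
Your approach is essentially the same as the paper's: Theorems~\ref{t:ex_main} and~\ref{t:class_main} establish the asymptotic statements as direct byproducts of the explicit bounds $\rho_i < f_i(n,q)$ derived in the course of proving Theorem~\ref{t:main1}, simply by noting in each case that $f_i(n,q) \to 0$ as $n$ or $q$ tends to infinity.

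One claim in your final paragraph is not quite right and would create a gap if taken literally. You write that for $G \ne {\rm L}_2(q)$, ``the centraliser of each involution is polynomially larger than $|H_2|^2$, so that the minimal involution classes \ldots\ contribute a vanishing amount to $\what{Q}_2(G)$''. But what controls the contribution of a class is $|H_2|^2/|x^G|$, not $|C_G(x)|/|H_2|^2$, and for the smallest involution classes in classical groups of growing rank the trivial bound does \emph{not} tend to $0$. For instance, in $G = {\rm L}_n(q)$ with $q \equiv 1 \pmod 4$ one has $|H_2| \leqs 2^{n-1}(q-1)^{n-1}$, while the class of an involution $x$ with $\nu(x)=1$ has size $q^{n-1}(q^n-1)/(q-1)$, so $|H_2|^2/|x^G|$ is unbounded as $n \to \infty$ with $q$ fixed. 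The paper handles this exactly as you describe earlier for ``small'' classes: it embeds $H_2$ in a subgroup $L$ of type ${\rm GL}_1(q) \wr S_n$ and counts directly that the number of involutions in $L$ with $\nu(x)=1$ is $O(n^2 q)$, which does give the required decay. So the correct mechanism is already present in your proposal; you just cannot rely on the centraliser heuristic to dispose of the small involution classes.
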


By combining this with Theorem \ref{t:deb}, we obtain the following corollary. 

\begin{corr}\label{c:main_asymptotic}
Let $(G_i,p_i)$ be a sequence of pairs, where $G_i$ is a finite simple group, $p_i$ is a prime and $|G_i| \to \infty$. Then either 
\[
\mbox{$Q_{p_i}(G_i)\to 0$ as $i \to \infty$,}
\]
or there exists a subsequence of pairs $(G_{i_k},p_{i_k})$ such that one of the following holds for all $k$:
\begin{itemize}\addtolength{\itemsep}{0.2\baselineskip}
\item[{\rm (a)}] $G_{i_k} = A_{n_k}$ and $p_{i_k} = 2$;
\item[{\rm (b)}] $G_{i_k} = \mathrm{L}_2(q_k)$, $q_k$ is odd and $p_{i_k} = 2$; 
\item[{\rm (c)}] $G_{i_k}$ is a group of Lie type defined over a field of characteristic $p_{i_k}$.
\end{itemize}
\end{corr}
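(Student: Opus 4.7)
\emph{Plan.} Argue by contrapositive: suppose $Q_{p_i}(G_i) \not\to 0$, so after passing to a subsequence there exists $\e>0$ with $Q_{p_i}(G_i) \geqs \e$ for all $i$. By the classification of finite simple groups each $G_i$ is alternating, sporadic or of Lie type; since only finitely many sporadic groups exist and $|G_i|\to\infty$, only finitely many $G_i$ are sporadic. After discarding these and applying pigeonhole, we may assume either that every $G_i = A_{n_i}$ is alternating with $n_i \to \infty$, or that every $G_i$ is of Lie type with defining characteristic $r_i$ and field of definition $\mathbb{F}_{q_i}$.

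In the alternating case, pass to a further subsequence on which $p_i$ is either constant or $p_i \to \infty$. If $p_i$ equals a constant prime $p$, then Theorem \ref{t:deb} forces $p = 2$ (since for any fixed odd $p$ we have $Q_p(A_{n_i}) \to 0$), and this is precisely case (a). The subcase $p_i \to \infty$ must be ruled out by a direct estimate of the Liebeck--Shalev-style bound on $\what{Q}_{p_i}(A_{n_i})$: a Sylow $p$-subgroup of $A_n$ becomes very small relative to $|A_n|$ when $p$ is a large fraction of $n$, and the resulting bound forces $Q_{p_i}(A_{n_i}) \to 0$, contradicting $Q_{p_i}(G_i) \geqs \e$.

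In the Lie type case, pass first to a subsequence where either $p_i = r_i$ for all $i$ or $p_i \ne r_i$ for all $i$; the former is exactly case (c). In the latter, split according to whether $p_i = 2$. If infinitely many $p_i$ are odd, then on that subsequence $p_i \in \pi(G_i) \setminus \{2,r_i\}$, so $Q_{p_i}(G_i) \leqs \gamma(G_i) \to 0$ by Theorem \ref{t:main_asymptotic}, a contradiction. Otherwise $p_i = 2$ for all large $i$ and each $r_i$ is odd, hence each $q_i$ is odd. Pass once more to a subsequence where either $G_i \cong {\rm L}_2(q_i)$ for all $i$, giving case (b), or $G_i \not\cong {\rm L}_2(q_i)$ for all $i$, in which case Theorem \ref{t:main_asymptotic} yields $Q_2(G_i) \leqs \b(G_i) \to 0$, again a contradiction.

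The main obstacle is the alternating subcase with $p_i \to \infty$, which is not directly covered by Theorem \ref{t:deb}. I expect this to be dispatched by a short self-contained calculation using that for $p > n/2$ a Sylow $p$-subgroup of $A_n$ has order $p$ and consists of $p$-cycles, whose conjugacy class is very large relative to $|H_p|$; for smaller $p$ in the range $p \to \infty$, a standard divisibility estimate on the order of a Sylow $p$-subgroup of $A_n$ suffices.
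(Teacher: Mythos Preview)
Your case analysis is exactly what underlies the paper's one-line proof (combine Theorem~\ref{t:deb} with Theorem~\ref{t:main_asymptotic} after discarding the finitely many sporadics), so the overall strategy is correct and the same as the paper's. You are also right to flag the alternating subcase with $p_i \to \infty$ as not literally covered by Theorem~\ref{t:deb}, which is stated for a fixed prime.

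However, your proposed ``short self-contained calculation'' for that subcase is not quite adequate as sketched. The crude estimate $\what{Q}_p(A_n) \leqs |H_p|^2/b$, with $|H_p| < p^{n/(p-1)}$ and $b$ the class size of a single $p$-cycle, does not obviously succeed in the intermediate range where $p$ is of order at most $\sqrt{n}$: there $|H_p|$ is still large enough that the numerator and denominator are of comparable size. The clean fix is simply to invoke the \emph{uniform} bound $Q_p(A_n) = O(n^{-1})$ for all odd primes $p$, recorded in Remark~\ref{r:alt} (from \cite[Proposition~4.1]{LS}); this handles varying $p_i$ at once and is presumably what the paper intends when it says the deduction is immediate.
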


\begin{remk}\label{r:main3}
Let us briefly comment on the special subsequences arising in the statement of Corollary \ref{c:main_asymptotic}. For (a), we refer to Theorem \ref{t:deb}(ii). And for (b), observe that if $G = {\rm L}_2(q)$ and $q$ is a Mersenne prime, then 
\[
Q_2(G) = 1 - \frac{s|H_2|}{|G:H_2|},
\]
where $s$ is the number of regular orbits of $H_2 = D_{q+1}$ on $G/H_2$. By \cite[Lemma 7.9]{BH20} we have $s = (q-3)/4$ and thus
\[
Q_2(G) = \frac{1}{2} + \frac{q+3}{2q(q-1)}.
\]
In particular, if there are infinitely many Mersenne primes (as expected), then there exists a sequence of groups $G_i = {\rm L}_2(q_i)$ such that $|G_i| \to \infty$ and $Q_2(G_i) \to \frac{1}{2}$ as $i$ tends to infinity. Finally, in case (c) we refer to \cite[Lemma 3.13]{Z_92p}, which implies that if $G$ is a simple group of Lie type in characteristic $r$, and $G \ne {\rm Sp}_4(2)'$, $G_2(2)'$, ${}^2G_2(3)'$ or ${}^2F_4(2)'$, then
\[
Q_{r}(G) = 1-\frac{|H_r||N_G(H_r)|}{|G|}.
\]
So for example, if $G = {\rm L}_n(q)$ and $q = r^f$ then $|N_G(H_r):H_r| = \frac{1}{d}(q-1)^{n-1}$ with $d = (n,q-1)$ and thus
\[
Q_r(G) = 1 - \frac{q^{n(n-1)/2}(q-1)^{n-1}}{\prod_{i=2}^n(q^i-1)} \to 1
\]
if $q$ is fixed and $n$ tends to infinity.
\end{remk}

As remarked above, our proof of Theorem \ref{t:main1} allows us to show that \eqref{e:qp} holds for every non-alternating finite simple group, with the single exception of ${\rm U}_4(2) \cong {\rm PSp}_4(3)$. Putting the special case ${\rm U}_4(2)$ to one side, this immediately settles Conjecture \ref{c:ls}, and hence Conjecture \ref{c:nilp}, for all non-alternating simple groups. In addition, it allows us to establish the following generalisation of Conjecture \ref{c:nilp} (as before, the conclusion can be checked directly for ${\rm U}_4(2)$). 

\begin{theoremm}\label{t:main2}
Let $G$ be a non-alternating finite simple group and let $A$ and $B$ be nilpotent subgroups of $G$. Then $A \cap B^x = 1$ for some $x \in G$. 
\end{theoremm}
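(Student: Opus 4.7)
The plan is to reduce the two-nilpotent-subgroup statement to the single-Sylow probabilistic estimate
\[
\sum_{p \in \pi(G)} Q_p(G) < 1,
\]
which has already been established (for every simple group of Lie type with the exception of $\mathrm{U}_4(2)$) in the course of proving Theorem \ref{t:main1}.

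First I note that any subgroup of the nilpotent group $A$ is nilpotent, so $A \cap B^x$ is nilpotent and is therefore the direct product of its Sylow subgroups. An element of $p$-power order in $A \cap B^x$ must lie in the Sylow $p$-subgroup $A_p$ of $A$ and in the Sylow $p$-subgroup $B_p^x$ of $B^x$, and conversely $A_p \cap B_p^x$ is a $p$-subgroup of $A \cap B^x$. Hence $(A \cap B^x)_p = A_p \cap B_p^x$, so $A \cap B^x = 1$ if and only if $A_p \cap B_p^x = 1$ for every prime $p$.

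Next I fix a Sylow $p$-subgroup $H_p$ of $G$ for each $p \in \pi(G)$ and, using Sylow's theorem, choose $a_p, b_p \in G$ with $A_p \leqs H_p^{a_p}$ and $B_p \leqs H_p^{b_p}$. Then
\[
A_p \cap B_p^x \leqs H_p^{a_p} \cap H_p^{b_p x},
\]
and the change of variable $y = b_p x a_p^{-1}$, which is a bijection of $G$, gives
\[
\frac{|\{ x \in G \,:\, A_p \cap B_p^x \ne 1 \}|}{|G|} \leqs \frac{|\{ y \in G \,:\, H_p \cap H_p^{y} \ne 1 \}|}{|G|} = Q_p(G).
\]
A union bound over $p \in \pi(G)$ then yields
\[
\frac{|\{ x \in G \,:\, A \cap B^x \ne 1 \}|}{|G|} \leqs \sum_{p \in \pi(G)} Q_p(G) < 1,
\]
the final inequality being supplied by Theorems \ref{t:ex_main} and \ref{t:class_main}, so a suitable $x$ exists whenever $G \ne \mathrm{U}_4(2)$.

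It remains to verify the conclusion directly for the single exception $G = \mathrm{U}_4(2)$, which I would handle computationally in \textsc{Magma} by looping over conjugacy class representatives of the nilpotent subgroups $A$ and $B$ of $G$ and, for each pair, exhibiting a suitable $x$ by random search (the order of $G$ is small enough that this is routine). The main obstacle is not in this final step but was already overcome in proving Theorem \ref{t:main1}; the only conceptual ingredient specific to the generalisation is the clean inclusion $A_p \cap B_p^x \leqs H_p^{a_p} \cap H_p^{b_p x}$, which converts the intersection problem for arbitrary nilpotent subgroups into the Sylow-to-Sylow intersection problem controlled by $Q_p(G)$.
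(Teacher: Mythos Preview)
Your argument is correct and essentially identical to the paper's: reduce to Sylow subgroups via $A \cap B^x = \prod_p (A_p \cap B_p^x)$, embed each $A_p, B_p$ in conjugate Sylow $p$-subgroups, observe that the resulting probability is bounded by $Q_p(G)$ (you make the change of variable explicit, the paper leaves it implicit), apply the union bound with $\sum_p Q_p(G)<1$ from Theorems \ref{t:ex_main} and \ref{t:class_main}, and handle $\mathrm{U}_4(2)$ by the same \textsc{Magma} computation described in Remark \ref{r:u42}(b).
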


To see this, let $A$ and $B$ be nilpotent subgroups of $G$ and for each $p \in \pi(G)$ let $A_p$ and $B_p$ be the unique Sylow $p$-subgroups of $A$ and $B$, respectively. Setting
\[
R_p(A,B) = \frac{|\{x \in G \,:\, A_p \cap B_p^x \ne 1\}|}{|G|}
\]
we observe that $A \cap B^x = 1$ for some $x \in G$ if $\sum_p R_p(A,B) < 1$. Now if we embed $A_p \leqs H_p$ and $B_p \leqs K_p$, where $H_p$ and $K_p$ are Sylow $p$-subgroups of $G$, then 
\[
R_p(A,B) \leqs \frac{|\{x \in G \,:\, H_p \cap K_p^x \ne 1\}|}{|G|} = Q_p(G)
\]
and so the inequality in \eqref{e:qp} yields $A \cap B^x = 1$ for some $x \in G$.

For sporadic groups, Theorem \ref{t:main2} gives a new proof of a result of Zenkov \cite{Z_spor}. Moreover, by combining Theorem \ref{t:main2} with earlier work of Zenkov on alternating groups \cite{Z_sym}, we obtain the following corollary which can be viewed as a natural generalisation of Theorem \ref{t:mz} and Corollary \ref{c:main1}. In the language of \cite{AB}, this shows that every pair of nilpotent subgroups of a simple group is regular.

\begin{corr}\label{c:main3}
Let $G$ be a finite simple group and let $A$ and $B$ be nilpotent subgroups of $G$. Then $A \cap B^x = 1$ for some $x \in G$. 
\end{corr}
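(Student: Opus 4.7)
The plan is to observe that Corollary \ref{c:main3} is a straightforward consolidation of Theorem \ref{t:main2} with pre-existing results, once we invoke the Classification of Finite Simple Groups. So I would proceed by splitting into three cases. If $G$ is a group of Lie type, the conclusion is immediate from Theorem \ref{t:main2}. If $G$ is sporadic, I would cite Zenkov's paper \cite{Z_spor}, which (as signposted by Remark \ref{r:spor}) already establishes the stronger two-nilpotent-subgroup statement for this family. If $G$ is alternating, I would cite \cite{Z_sym}, where Zenkov proves the analogous result for $A_n$.

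The only content beyond these citations is the recognition that the probabilistic bound
\[
\sum_{p \in \pi(G)} Q_p(G) < 1
\]
underlying Theorem \ref{t:main2} automatically upgrades from the single-subgroup case $A = B$ (Conjecture \ref{c:nilp}) to the two-subgroup case. Given nilpotent subgroups $A, B \leqs G$, with Sylow $p$-subgroups $A_p$ and $B_p$ respectively, I would embed $A_p \leqs H_p$ and $B_p \leqs K_p$ into Sylow $p$-subgroups $H_p, K_p$ of $G$. Then the set of ``bad'' $x$, for which $A \cap B^x \ne 1$, is contained in $\bigcup_{p \in \pi(G)} \{x \in G : A_p \cap B_p^x \ne 1\}$; each constituent has proportion at most $Q_p(G)$, so the union bound produces a ``good'' element $x$. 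This mirrors the argument given immediately after the statement of Theorem \ref{t:main2}, and the identical device is what allows the sporadic and alternating arguments to deliver the two-subgroup strengthening rather than just the $A = B$ form.

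There is no genuine obstacle beyond the work already carried out: the hard step is Theorem \ref{t:main2} itself, whose proof requires the probabilistic machinery for groups of Lie type together with the direct verification of the single exception ${\rm U}_4(2) \cong {\rm PSp}_4(3)$. Once that is in hand, Corollary \ref{c:main3} is essentially bookkeeping. The only subtlety I would be careful about is to confirm that the cited alternating and sporadic arguments really deliver the stronger two-subgroup formulation; the remark pointing to \cite{AB} (``every pair of nilpotent subgroups of a simple group is regular'') and the signpost in Remark \ref{r:spor} indicate that this has already been checked in the literature, so the three-case combination completes the proof.
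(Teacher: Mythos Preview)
Your proposal is correct and follows essentially the same approach as the paper: split via the Classification of Finite Simple Groups, apply Theorem~\ref{t:main2} for groups of Lie type, and cite Zenkov's results \cite{Z_sym} and \cite{Z_spor} for alternating and sporadic groups respectively. One small clarification: the probabilistic upgrade you describe is the content of (the proof of) Theorem~\ref{t:main2} itself, whereas the cited Zenkov papers for $A_n$ and the sporadics establish the two-subgroup statement directly rather than via the $\sum_p Q_p(G)<1$ mechanism.
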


Finally, let us observe that our proof of Theorem \ref{t:main1} is independent of Theorem \ref{t:mz}. In particular, we obtain a new proof of Theorem \ref{t:mz} for simple groups of Lie type, which does not rely on the earlier work in \cite{Green, Michler, Willems} concerning $p$-blocks and defect groups. Moreover, our proof is quantitative and it allows us to draw conclusions concerning the proportion of elements $x \in G$ with $H \cap H^x = 1$, which is not possible via the original existence proof in \cite{ZM}. 


\vs

\noindent \textbf{Notation.} Let $G$ be a finite group and let $n$ be a positive integer. We will write $C_n$, or just $n$, for a cyclic group of order $n$ and $G^n$ for the direct product of $n$ copies of $G$. An unspecified extension of $G$ by a group $H$ will be denoted by $G.H$; if the extension splits then we may write $G{:}H$. We will use $i_n(G)$ for the number of elements of order $n$ in $G$, and we will sometimes write $[n]$ for an unspecified soluble group of order $n$. Throughout the paper, we adopt the standard notation for simple groups of Lie type from \cite{KL}. In addition, we write $(a,b)$ for the highest common factor of the positive integers $a$ and $b$. And for a prime $p$, we use $(n)_p$ to denote the $p$-part of $n$. All logarithms in this paper are base $2$.

\vs

\noindent \textbf{Acknowledgements.} We thank an anonymous referee for their helpful comments and suggestions on an earlier version of the paper. The second author thanks the London Mathematical Society for their support as an LMS Early Career Research Fellow at the University of St Andrews.

\section{Preliminaries}\label{s:prel}

In this section we present a collection of preliminary results, which will be needed in the proofs of our main theorems.

\subsection{Number theory}\label{ss:nt}

We will require one or two basic number-theoretic results. Given a positive integer $m$ and a prime number $p$, we write $(m)_p$ for the largest power of $p$ dividing $m$ (so for example, $(18)_3 = 9$).

\begin{lem}\label{l:nt}
Let $t$ be a prime power, let $d$ be a positive integer and let $p$ be a prime divisor of $t-\e$, where $\e=\pm 1$.
\begin{itemize}\addtolength{\itemsep}{0.2\baselineskip}
\item[{\rm (i)}] If $p=2$, then
\[
(t^d-\e)_p = \left\{\begin{array}{ll}
(t-\e)_p & \mbox{if $d$ is odd} \\
(t^2-1)_p(d/2)_p & \mbox{if $d$ is even and $\e=1$} \\
2 & \mbox{if $d$ is even and $\e=-1$.}
\end{array}\right.
\]
\item[{\rm (ii)}] If $p$ is odd, then
\begin{align*}
(t^d-\e)_p & = \left\{ \begin{array}{ll}
1 & \mbox{if $d$ is even and $\e=-1$} \\
(t-\e)_p(d)_p & \mbox{otherwise}
\end{array}\right. \\
(t^d+\e)_p & = \left\{ \begin{array}{ll}
(t-\e)_p(d)_p & \mbox{if $d$ is even and $\e=-1$} \\
1 & \mbox{otherwise.}
\end{array}\right.
\end{align*}
\end{itemize}
\end{lem}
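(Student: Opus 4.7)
The plan is to treat the lemma as a direct application of the Lifting-the-Exponent (LTE) lemma, dispatching each subcase. I would first state the two forms of LTE that I need: for odd $p$ with $p\mid a-b$ and $p\nmid ab$, one has $v_p(a^n-b^n)=v_p(a-b)+v_p(n)$; and for odd $p$ with $p\mid a+b$, $p\nmid ab$ and $n$ odd, one has $v_p(a^n+b^n)=v_p(a+b)+v_p(n)$. For $p=2$, I would use the slightly more delicate versions: $v_2(a^n-b^n)=v_2(a-b)$ when $n$ is odd, and $v_2(a^n-b^n)=v_2(a-b)+v_2(a+b)+v_2(n)-1$ when $n$ is even and $a,b$ are both odd.

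For part (ii), where $p$ is odd and $p\mid t-\e$, I would handle the four (sign, parity) combinations in turn. When $\e=1$, the formula $(t^d-1)_p=(t-1)_p(d)_p$ is immediate from LTE applied to $(a,b)=(t,1)$. When $\e=-1$ and $d$ is odd, the formula $(t^d+1)_p=(t+1)_p(d)_p$ follows from the odd-exponent form of LTE applied to $(a,b)=(t,1)$ using $p\mid t+1$. When $\e=-1$ and $d$ is even, both $t^d+1\equiv 2\pmod{p}$ (since $t\equiv -1$ and $d$ even gives $t^d\equiv 1$) and $t^d-1=(t^2)^{d/2}-1$ where LTE applied to $(t^2,1)$ gives $v_p(t^d-1)=v_p(t^2-1)+v_p(d/2)=v_p(t+1)+v_p(d)$, using $p\nmid t-1$ (else $p\mid 2$) and $v_p(d/2)=v_p(d)$. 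The remaining cases for $(t^d+\e)_p$ are settled by a single congruence mod $p$.

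For part (i), with $p=2$ and $t$ odd, the odd-$d$ cases reduce to the odd-exponent form of LTE and its analogue for sums (noting $t^d+1=(t+1)\sum_{i=0}^{d-1}(-t)^i$ with the second factor congruent to $d\pmod{2}$). For $\e=1$ and $d$ even, the even-exponent form of LTE yields
\[
v_2(t^d-1)=v_2(t-1)+v_2(t+1)+v_2(d)-1=v_2(t^2-1)+v_2(d/2),
\]
as required. For $\e=-1$ and $d$ even, one simply notes that $t^d\equiv 1\pmod{8}$, so $t^d+1\equiv 2\pmod{4}$ and the 2-part equals $2$.

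This is largely bookkeeping rather than a deep argument, so I do not expect a genuine obstacle. The only subtlety worth flagging is the asymmetry for $p=2$: the jump from $v_2(t-1)$ to $v_2(t^2-1)+v_2(d/2)$ as $d$ passes from odd to even must be read off carefully from the correct form of LTE, and one has to remember to assume $t$ is odd (forced here by $p\mid t-\e$).
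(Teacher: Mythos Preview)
Your argument via the Lifting-the-Exponent lemma is correct and covers all the subcases cleanly; the only point to watch is that your justification for the odd-$d$, $\e=-1$, $p=2$ case (via the factorisation $t^d+1=(t+1)\sum(-t)^i$ with the cofactor odd) is fine but could equally be phrased as the standard odd-exponent LTE identity $v_2(a^n+b^n)=v_2(a+b)$ for $n$ odd.

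As for comparison with the paper: the paper does not actually prove this lemma at all, but simply cites \cite[Lemma~2.1(i)]{BBGT}. So your write-up is strictly more self-contained than what appears here. Since the result is a routine consequence of LTE, there is no meaningful methodological difference to discuss---your proof is essentially what any direct proof of this statement would look like.
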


\begin{proof}
This is \cite[Lemma 2.1(i)]{BBGT}.
\end{proof}

We will also need the following results concerning factorials and odd prime divisors.

\begin{lem}\label{l:factorial}
Let $m$ be a positive integer and let $p$ be a prime. Then $p^m(m!)_p = ((pm)!)_p$ and $(m!)_p < p^k$, where $k = m/(p-1)$.
\end{lem}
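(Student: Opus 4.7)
The plan is to prove both assertions as straightforward consequences of Legendre's formula for the $p$-adic valuation of a factorial, namely $v_p(n!) = \sum_{i \geqs 1} \lfloor n/p^i \rfloor$, which is equivalently expressed as $v_p(n!) = (n - s_p(n))/(p-1)$ where $s_p(n)$ denotes the sum of the digits of $n$ written in base $p$.

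For the first identity, I would apply Legendre's formula to $(pm)!$ and reindex: writing $j = i-1$,
\[
v_p((pm)!) = \sum_{i \geqs 1} \lfloor pm/p^i \rfloor = \lfloor pm/p \rfloor + \sum_{j \geqs 1} \lfloor m/p^{j} \rfloor = m + v_p(m!).
\]
Since $(n!)_p = p^{v_p(n!)}$, this is exactly $((pm)!)_p = p^m (m!)_p$.

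For the inequality $(m!)_p < p^k$ with $k = m/(p-1)$, the cleanest route is via the digit-sum form of Legendre's formula: since $m \geqs 1$ forces $s_p(m) \geqs 1$, we obtain
\[
v_p(m!) = \frac{m - s_p(m)}{p-1} \leqs \frac{m-1}{p-1} < \frac{m}{p-1} = k,
\]
and exponentiating yields $(m!)_p < p^k$. Alternatively, one can simply bound the sum $\sum_{i \geqs 1} \lfloor m/p^i \rfloor$ by the geometric series $\sum_{i \geqs 1} m/p^i = m/(p-1)$, noting that the inequality is strict because $\lfloor m/p^i \rfloor < m/p^i$ as soon as $p^i > m$.

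There is no real obstacle here; the lemma is a routine bookkeeping exercise with $p$-adic valuations, and the only minor point requiring care is ensuring the strictness in the second inequality, which is guaranteed either by the positivity of $s_p(m)$ or by the eventual vanishing of the floor terms in the infinite sum.
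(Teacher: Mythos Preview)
Your proposal is correct and follows essentially the same approach as the paper: the paper simply declares the first identity ``clear'' and for the second bounds Legendre's sum $\sum_{i\geqs 1}\lfloor m/p^i\rfloor$ strictly by the geometric series $m/(p-1)$, which is exactly your alternative argument. Your digit-sum variant and your explicit reindexing for the first part are minor elaborations but not a different route.
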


\begin{proof}
The first claim is clear. For the second statement, set $(m!)_p = p^{\ell}$ and note that 
\[
\ell = \sum_{i=1}^{\infty}\left\lfloor \frac{m}{p^i} \right\rfloor < m \sum_{i=1}^{\infty}p^{-i} = \frac{m}{p-1}. \qedhere
\]
\end{proof}

\begin{lem}\label{l:primes}
Let $d$ be a positive integer. Then $d+1$ has at most $\log d$ distinct odd prime divisors.
\end{lem}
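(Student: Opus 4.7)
The plan is to exploit the crude fact that every odd prime is at least $3$, so the product of the distinct odd prime divisors of $d+1$ grows at least as fast as $3^k$, where $k$ denotes the number of such divisors. Since the conclusion is vacuous when $k = 0$, I would assume $k \geqs 1$, which in particular forces $d \geqs 2$.

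Writing the distinct odd prime divisors of $d+1$ as $p_1 < p_2 < \cdots < p_k$, I would observe that the $p_i$ are pairwise coprime and each divides $d+1$, so
\[
d+1 \geqs p_1 p_2 \cdots p_k \geqs 3^k.
\]
To convert this into the desired inequality $k \leqs \log d$ (base $2$), it then suffices to check that $3^k - 1 \geqs 2^k$ for all $k \geqs 1$. This is a one-line induction: the base case $k = 1$ reads $2 \geqs 2$, and the inductive step gives
\[
3^{k+1} - 1 = 3(3^k - 1) + 2 \geqs 3 \cdot 2^k + 2 \geqs 2^{k+1}.
\]
Combining the two bounds yields $d \geqs 3^k - 1 \geqs 2^k$, and taking logarithms completes the argument.

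There is no serious obstacle here — the statement is purely elementary. The only point worth flagging is that the naive inequality $d+1 \geqs 3^k$ literally produces $k \leqs \log_3(d+1)$, which is not quite $k \leqs \log_2 d$; the short induction above is exactly what absorbs the slight discrepancy between the two and delivers the stated bound.
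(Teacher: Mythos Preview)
Your proof is correct and follows essentially the same idea as the paper's: both arguments use that each odd prime divisor is at least $3$, so the odd part of $d+1$ is at least $3^k$, and then absorb the ``$+1$'' via the inequality $3^k - 1 \geqs 2^k$. The paper's version factorises $d+1 = 2^a p_1^{a_1}\cdots p_k^{a_k}$ completely and observes $d \geqs 2^{a+b}$ with $b = \sum a_i \geqs k$, whereas you work directly with the product of the distinct odd primes and make the $3^k - 1 \geqs 2^k$ step explicit by induction; these are cosmetic variations of the same argument.
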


\begin{proof}
We may assume $d+1$ is divisible by an odd prime, so we can write 
\[
d+1 = 2^{a}p_1^{a_1} \cdots p_k^{a_k},
\] 
where the $p_i$ are distinct odd primes and we have $a \geqs 0$ and $a_i \geqs 1$ for all $i$. Setting $b = \sum_ia_i$, we see that $d \geqs 2^{a+b}$ and thus $k \leqs b \leqs \log d$ as required.
\end{proof}

\subsection{Probability}\label{ss:prob}

Let $G$ be a finite simple group and let $\pi(G)$ be the set of prime divisors of $|G|$. Fix a prime $p \in \pi(G)$ and let $H_p$ be a Sylow $p$-subgroup of $G$. In view of Theorem \ref{t:main1}, we are interested in the following property:
\begin{equation}\label{e:star}
\mbox{\emph{There exists an element $x \in G$ such that $H_p \cap H_p^x = 1$ for all $p \in \pi(G)$}}
\tag{$\star$}
\end{equation}
and our main goal is to show that \eqref{e:star} holds for all non-alternating simple groups.

Let
\[
Q_p(G) = \frac{|\{ x \in G \,:\, H_p \cap H_p^x \ne 1\}|}{|G|}
\]
be the probability that $H_p \cap H_p^x \ne 1$ for a uniformly random element $x \in G$ and note that $Q_p(G) < 1$ by Theorem \ref{t:mz}. The following observation is immediate.

\begin{lem}\label{l:prob}
Property \eqref{e:star} holds if $\displaystyle \sum_{p \in \pi(G)} Q_p(G) < 1$.
\end{lem}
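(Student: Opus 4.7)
The plan is to prove Lemma \ref{l:prob} by a straightforward union bound. For each prime $p \in \pi(G)$, define the set of "bad" elements
\[
B_p = \{x \in G \,:\, H_p \cap H_p^x \ne 1\},
\]
so by definition $|B_p|/|G| = Q_p(G)$. An element $x \in G$ witnesses property \eqref{e:star} precisely when $x \notin \bigcup_{p \in \pi(G)} B_p$. Hence it suffices to show that this union does not cover all of $G$.

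The key step is the elementary estimate
\[
\frac{1}{|G|}\left| \bigcup_{p \in \pi(G)} B_p \right| \leqs \sum_{p \in \pi(G)} \frac{|B_p|}{|G|} = \sum_{p \in \pi(G)} Q_p(G),
\]
valid because $\pi(G)$ is finite. Under the hypothesis $\sum_p Q_p(G) < 1$, the right-hand side is strictly less than $1$, so $\bigcup_p B_p$ is a proper subset of $G$. Any $x$ in the (non-empty) complement then satisfies $H_p \cap H_p^x = 1$ for every $p \in \pi(G)$, which is exactly property \eqref{e:star}.

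There is no real obstacle here: the statement is a one-line union bound, and nothing beyond the definition of $Q_p(G)$ is needed. The lemma is recorded because it sets up the probabilistic strategy underlying the rest of the paper, where the actual work lies in producing sharp upper bounds on each $Q_p(G)$ (or its proxy $\widehat{Q}_p(G)$) so that the finite sum in the hypothesis can be controlled.
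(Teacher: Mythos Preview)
Your proof is correct and is precisely the union bound argument the paper has in mind; indeed, the paper does not even give a proof, simply noting that ``The following observation is immediate.'' Your write-up spells out exactly why.
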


We will also make use of the following result.

\begin{lem}\label{l:neww}
Let $G$ be a finite group with $Q_p(G)<1$ and let $H_p$ be a Sylow $p$-subgroup of $G$. Then
\begin{equation}\label{e:free}
Q_p(G) \leqs 1-\frac{|H_p||N_G(H_p)|}{|G|}.
\end{equation}
\end{lem}

\begin{proof}
Set $K = N_G(H_p)$ and observe that $H_p$ is the unique Sylow $p$-subgroup of $K$. Since $Q_p(G)<1$, we may fix an element $x \in G$ such that $H_p \cap H_p^x = 1$. And since $H_p^x \cap K$ is a $p$-subgroup of $K$, it follows that $H_p^x \cap K \leqs H_p$ 
and thus $H_p^x \cap K = H_p^x \cap H_p = 1$.

Now consider the double coset $KxH_p$, which has size $|K||H_p|$ since $H_p^x \cap K = 1$. For any element $z = kxh$ in this double coset, we have
\[
H_p \cap H_p^z = H_p \cap H_p^{kxh} = H_p \cap H_p^{xh} = (H_p^{h^{-1}} \cap H_p^x)^h = (H_p \cap H_p^x)^h = 1.
\]
This allows us to conclude that there are at least $|K||H_p|$ elements $x \in G$ with $H_p \cap H_p^x = 1$ and the result follows.
\end{proof}


For a given prime $p \in \pi(G)$, let us observe that $Q_p(G)$ coincides with the probability that a random pair of cosets in $G/H_p$ do not form a base for $G$. Then by 
arguing as in the proof of \cite[Theorem 1.3]{LSh99}, it follows that 
\[
Q_p(G) \leqs \widehat{Q}_p(G) := \sum_{i=1}^k \frac{|x_i^G \cap H_p|^2}{|x_i^G|},
\]
where $\{x_1, \ldots, x_k\}$ is a complete set of representatives of the conjugacy classes in $G$ of elements of order $p$. In particular, \eqref{e:star} holds if
\begin{equation}\label{e:hati}
\sum_{p \in \pi(G)} \widehat{Q}_p(G) < 1.
\end{equation}

\begin{rem}
Let us record several observations on the above set-up.

\begin{itemize}\addtolength{\itemsep}{0.2\baselineskip}
\item[{\rm (a)}] We have
\[
Q_p(G) = 1 - \frac{s|H_p|}{|G:H_p|},
\]
where $s$ is the number of regular $H_p$-orbits on $G/H_p$. Equivalently, $s$ is the number of $(H_p,H_p)$ double cosets in $G$ of size $|H_p|^2$.

\item[{\rm (b)}] Let $G$ be a finite simple group of Lie type defined over a field $\mathbb{F}_q$ of characteristic $r$. If we exclude the special cases ${\rm Sp}_4(2)'$, $G_2(2)'$, ${}^2G_2(3)'$ and ${}^2F_4(2)'$, then a result of Zenkov \cite[Lemma 3.13]{Z_92p} shows that 
\[
Q_r(G) = 1-\frac{|H_r||N_G(H_r)|}{|G|},
\]
so the upper bound in \eqref{e:free} is in fact an equality in this situation.

In addition, for each prime divisor $p \ne r$ of $|G|$ we will derive an explicit upper bound of the form 
\[
\what{Q}_p(G) \leqs f_p(t,q),
\]
where $t$ is the rank of $G$. Then by combining these estimates, we get
\[
\sum_{p \in \pi(G) \setminus \{r\}} \what{Q}_p(G) \leqs f(t,q),
\]
which we can use with the upper bound in \eqref{e:free} (with $p=r$) in order to verify the inequality in \eqref{e:hati}, with the possible exception of a handful of special cases that can be treated directly. This is our basic approach to the proof of Theorem \ref{t:main1} for groups of Lie type. In addition, we will see that $f(t,q) \to 0$ as $t$ or $q$ tends to infinity (unless $G = {\rm L}_2(q)$ and $q$ is odd) and this is how we will prove Theorem \ref{t:main_asymptotic}. 

\item[{\rm (c)}] As noted in Section \ref{s:intro}, the key inequality $\sum_p Q_p(G)<1$ in Lemma \ref{l:prob} implies that if $A$ and $B$ are nilpotent subgroups of $G$, then $A \cap B^x = 1$ for some $x \in G$. So a detailed analysis of $\sum_p Q_p(G)$ for non-alternating simple groups plays an essential role in our proof of Theorem \ref{t:main2}.
\end{itemize}
\end{rem}

The following elementary result will be useful for computing an upper bound on $\widehat{Q}_p(G)$.

\begin{lem}\label{l:est}
Suppose $x_1, \ldots, x_m$ represent distinct $G$-classes such that $\sum_i |x_i^G \cap H_p| \leqs a$ and $|x_i^G| \geqs b$ for all $i$. Then
\[
\sum_{i=1}^m \frac{|x_i^G \cap H_p|^2}{|x_i^G|} \leqs a^2b^{-1}.
\]
\end{lem}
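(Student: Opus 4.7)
The plan is a short two-step estimate. First I would replace each denominator $|x_i^G|$ in the sum by its lower bound $b$, using $|x_i^G| \geqs b$, which gives
\[
\sum_{i=1}^m \frac{|x_i^G \cap H_p|^2}{|x_i^G|} \leqs \frac{1}{b} \sum_{i=1}^m |x_i^G \cap H_p|^2.
\]
Then I would bound the sum of squares by the square of the sum, using the elementary inequality $\sum_i c_i^2 \leqs \bigl(\sum_i c_i\bigr)^2$ valid for any non-negative reals $c_i$ (since all cross terms $2c_ic_j$ in the expansion of the right-hand side are non-negative). Applying this with $c_i = |x_i^G \cap H_p| \geqs 0$ and the hypothesis $\sum_i |x_i^G \cap H_p| \leqs a$ yields
\[
\frac{1}{b} \sum_{i=1}^m |x_i^G \cap H_p|^2 \leqs \frac{1}{b}\left(\sum_{i=1}^m |x_i^G \cap H_p|\right)^2 \leqs \frac{a^2}{b},
\]
which is the desired bound. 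There is no serious obstacle here — the result is purely formal and requires no group-theoretic input beyond non-negativity of the quantities $|x_i^G \cap H_p|$ and the class-size lower bound.
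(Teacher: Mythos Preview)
Your argument is correct; the paper itself does not give a proof but simply cites \cite[Lemma 2.1]{B07}, and your two-step estimate (replace each denominator by $b$, then bound the sum of squares by the square of the sum) is exactly the standard elementary proof of this inequality.
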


\begin{proof}
This is \cite[Lemma 2.1]{B07}.
\end{proof}

\section{Exceptional groups of Lie type}\label{s:excep}

In this section we will prove Theorems \ref{t:main1}, \ref{t:main_asymptotic} and \ref{t:main2} for all simple exceptional groups of Lie type; the classical groups will be handled in Section \ref{s:class} and we conclude by dealing with the sporadic groups in Section \ref{s:spor}. We begin by setting up some general notation that we will use for all simple groups of Lie type. 

Let $G$ be a finite simple group of Lie type over $\mathbb{F}_q$, where $q = r^f$ for some prime $r$ and integer $f$. Let $\pi(G)$ be the set of prime divisors of $|G|$ and set 
\[
\pi'(G) = \pi(G) \setminus \{r\}.
\]
Let $H_r$ be a Sylow $r$-subgroup of $G$, which means that $B = N_G(H_r)$ is a Borel subgroup of $G$ with unipotent radical $H_r$. By choosing $x \in G$ so that $B^x$ is the opposite Borel subgroup, we deduce that $H_r \cap H_r^x = 1$ (without appealing to Theorem \ref{t:mz}). So in view of \eqref{e:free} and Lemma \ref{l:prob}, we observe that \eqref{e:star} holds (see Section \ref{ss:prob}) if $\Sigma(G) < 1$, where
\begin{equation}\label{e:sigma}
\Sigma(G) = 1 - \frac{|H_r||N_G(H_r)|}{|G|} + \sum_{p \in \pi'(G)} \what{Q}_p(G).
\end{equation}

The main result of this section is the following, which immediately yields Theorems \ref{t:main1}, \ref{t:main_asymptotic} and \ref{t:main2} for exceptional groups.

\begin{thm}\label{t:ex_main}
Let $G$ be a finite simple exceptional group of Lie type and set
\[
\a(G) = \sum_{p \in \pi(G)} Q_p(G),\;\; \b(G) = \sum_{p \in \pi'(G)} Q_p(G).
\]
Then $\a(G)<1$. In addition, $\b(G) \to 0$ as $|G| \to \infty$.
\end{thm}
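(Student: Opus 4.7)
The plan is to reduce the theorem to a verification of the inequality $\Sigma(G)<1$ in \eqref{e:sigma}, which immediately yields $\alpha(G)<1$ via the bounds $Q_r(G)\leqs 1-|H_r||N_G(H_r)|/|G|$ (an equality apart from a handful of small exceptions by \cite[Lemma 3.13]{Z_92p}) and $Q_p(G)\leqs \what{Q}_p(G)$ for $p\in\pi'(G)$. For the asymptotic claim, the goal is to derive explicit upper bounds $\what{Q}_p(G)\leqs f_p(q)$ whose individual terms and total sum over $p\in\pi'(G)$ both tend to $0$ as $|G|\to\infty$.

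First, I would dispose of small cases by direct computation. For the smaller exceptional families, namely ${}^2B_2(q)$, ${}^2G_2(q)$, $G_2(q)$, ${}^3D_4(q)$, ${}^2F_4(q)'$ for bounded $q$, together with a few instances of $F_4(q)$, $E_6^{\e}(q)$, $E_7(q)$ and $E_8(q)$ over the smallest fields, $\what{Q}_p(G)$ can be computed precisely using conjugacy-class data and fusion maps available in the \textsf{GAP} Character Table Library \cite{GAPCTL} and in \textsc{Magma}, exactly as in the sporadic analysis in the proof of Theorem \ref{t:spor}. This also covers the degenerate cases where \cite[Lemma 3.13]{Z_92p} fails and $Q_r(G)$ must be computed rather than read off from the Borel formula.

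In the generic case, for each $p\in\pi'(G)$ I would invoke the classification of conjugacy classes of semisimple elements of order $p$ in exceptional groups (due to Deriziotis--Liebeck, L\"{u}beck, Fleischmann--Janiszczak and others). Writing $\{x_1,\ldots,x_k\}$ for representatives of the $G$-classes of elements of order $p$, I would partition the index set into two parts. For the \emph{large} classes, where $|x_i^G|$ comfortably dominates $|H_p|^2$, the crude estimate $|x_i^G\cap H_p|\leqs |H_p|$ together with Lemma \ref{l:est} already gives an acceptable contribution. For the \emph{small} classes, typically regular semisimple elements centralised by a cyclic torus of order $\Phi_d(q)$ for an appropriate $d$, I would embed $H_p$ in a carefully chosen overgroup $L$ (often the normaliser of such a torus, or a maximal-rank subgroup) and use $|x_i^G\cap H_p|\leqs |x_i^G\cap L|$, which can be bounded from the structure of $L$. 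For primes $p$ not dividing the order of the Weyl group $W$ the Sylow $p$-subgroups are abelian and lie in a cyclic torus, which makes these estimates particularly clean. Summing the resulting $f_p(q)$ and using Lemma \ref{l:primes} to bound $|\pi'(G)|$ by $O(\log q)$ then yields $\b(G)\leqs f(q)$ with $f(q)\to 0$ as $q\to\infty$, which gives the asymptotic assertion.

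The main obstacle will be the handful of primes $p$ for which $H_p$ has substantial overlap with the smallest semisimple classes: in those cases the contribution to $\what{Q}_p(G)$ can be comparable in magnitude to $|H_r||N_G(H_r)|/|G|^{-1}$, so the trivial bound is inadequate and the choice of overgroup $L$ becomes delicate. A secondary, more bookkeeping-level difficulty is the treatment of primes such as $p=2$ in odd characteristic (for $G\in\{G_2(q),{}^3D_4(q),F_4(q),E_6^\e(q),E_7(q),E_8(q)\}$) and $p=3$ for $G_2(q),{}^3D_4(q),F_4(q)$, where $H_p$ is non-abelian and several classes contribute simultaneously; here one must organise the sum carefully class by class, rather than collapsing it via Lemma \ref{l:est}. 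With these estimates in hand, verifying $\Sigma(G)<1$ reduces to comparing the explicit function $f(q)$ with $|H_r||N_G(H_r)|/|G|$, and for each infinite family this should hold with plenty of room once $q$ exceeds a small threshold that is absorbed into the initial computational base.
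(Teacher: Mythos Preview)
Your overall strategy is correct and would succeed, but it is considerably more elaborate than what the paper actually does for exceptional groups. In the paper's proof of Theorem~\ref{t:ex_main}, no partition into ``large'' versus ``small'' classes is made and no overgroup embeddings $H_p\leqs L$ are used. Instead, the paper exploits the fact that exceptional groups have bounded rank: it simply takes the trivial bound $|x_i^G\cap H_p|\leqs|H_p|$ for every class, combines it via Lemma~\ref{l:est} with a single uniform lower bound $|x^G|>\ell$ taken from \cite[Proposition~2.11]{BTh} (recorded here as Lemma~\ref{l:bounds}), and then sums over at most $C\log q$ primes. Because $|H_p|$ is a polynomial in $q$ of degree at most the rank, while the class-size lower bounds $\ell_1,\ell_2$ grow much faster, this already gives $\what\beta(G)\to 0$ and $\Sigma(G)<1$ for all but a handful of very small $q$, which are then checked directly. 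The only place where anything finer than the crude $|H_p|$ bound is invoked is a single computation of $i_2(H_2)$ for $F_4(3)$.

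The overgroup-embedding and class-by-class refinements you describe are precisely the tools the paper deploys for the \emph{classical} groups in Section~\ref{s:class}, where the rank is unbounded and the crude estimate genuinely fails. For exceptional groups they are unnecessary. One small slip: you describe the ``small'' classes as ``regular semisimple elements centralised by a cyclic torus of order $\Phi_d(q)$''; those are in fact the elements with the \emph{largest} conjugacy classes, not the smallest. The delicate classes are those with large centraliser, typically involutions or elements lying in a Levi factor of maximal dimension.
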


It is convenient to handle certain low rank groups defined over small fields by direct computation in {\sc Magma} \cite{magma} (working with version V2.28-21). This is recorded in the following result.

\begin{lem}\label{l:ex_comp}
The conclusion to Theorem \ref{t:ex_main} holds when $G$ is one of the following:
\[
G_2(2)',\; G_2(3), \; G_2(4), \; {}^2G_2(3)', \; {}^2F_4(2)'.
\]
\end{lem}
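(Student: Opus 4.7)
The plan is to handle all five groups by direct computation in \textsc{Magma}. Each has order at most $|{}^2F_4(2)'| < 2 \times 10^7$, so exact evaluation of $Q_p(G)$ for every $p \in \pi(G)$, and hence of $\alpha(G) = \sum_p Q_p(G)$, is straightforward. The asymptotic clause of Theorem \ref{t:ex_main} is vacuous here since the lemma concerns a fixed finite list of groups.

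First I would construct each group as a low-degree permutation group using standard library routines: $G_2(2)' \cong {\rm U}_3(3)$ and ${}^2G_2(3)' \cong {\rm L}_2(8)$ are very small, and $G_2(3)$, $G_2(4)$, ${}^2F_4(2)'$ all admit natural faithful permutation representations of modest degree that are available in \textsc{Magma}. For each prime $p$ dividing $|G|$ I would extract a Sylow $p$-subgroup $H_p$ via \texttt{SylowSubgroup}. Then, following Remark (a) in Section \ref{ss:prob}, I would use
\[
Q_p(G) = 1 - \frac{s_p |H_p|^2}{|G|},
\]
where $s_p$ is the number of $(H_p, H_p)$ double cosets in $G$ of maximal size $|H_p|^2$. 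The integer $s_p$ can be computed by enumerating double coset representatives in \textsc{Magma} and filtering those of full size (equivalently, counting regular $H_p$-orbits on $G/H_p$). Summing the resulting rationals over $p \in \pi(G)$ yields $\alpha(G)$, which one checks is strictly less than $1$ in each of the five cases.

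The only delicate feature is that these five groups are precisely those excluded from Zenkov's formula in Remark (b), so the clean expression $1 - |H_r||N_G(H_r)|/|G|$ for the contribution of the defining characteristic need not hold and must be computed directly. This causes no real difficulty, because $Q_r(G)$ is being evaluated exactly by the double coset method in any case. If memory or timing were to become an issue for the two largest groups $G_2(4)$ and ${}^2F_4(2)'$, one could instead bound $Q_p(G) \leqs \widehat{Q}_p(G)$ using the class-function data in the \textsf{GAP} Character Table Library \cite{GAPCTL} and argue as in the proof of Theorem \ref{t:spor}, but this fallback is not expected to be necessary.
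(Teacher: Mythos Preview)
Your proposal is correct and essentially the same as the paper's: both dispose of these five small groups by direct computation in \textsc{Magma}. The only difference is that the paper verifies the upper bound $\Sigma(G) < 1$ (computing $\widehat{Q}_p(G)$ exactly for $p \ne r$ via conjugacy class fusion in $H_p$, together with the Borel estimate $Q_r(G) \leqslant 1 - |H_r||N_G(H_r)|/|G|$) rather than computing $\alpha(G)$ itself via double cosets; this is a minor implementation choice, and your exact computation of $Q_p(G)$ is if anything more direct. One small correction: only $G_2(2)'$, ${}^2G_2(3)'$ and ${}^2F_4(2)'$ are among the groups excluded from Zenkov's equality --- $G_2(3)$ and $G_2(4)$ are not --- though this does not affect your argument, since you compute $Q_r(G)$ directly in any case.
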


\begin{proof}
Let $r$ be the defining characteristic. 
We begin by using the function 
\[
\mbox{\texttt{AutomorphismGroupSimpleGroup}}
\]
to construct $G$ as a permutation group of degree $n$ (it is helpful to observe that $G_2(2)' \cong {\rm U}_3(3)$ and ${}^2G_2(3)' \cong {\rm L}_2(8)$, so we can work directly with the classical groups ${\rm U}_3(3)$ and ${\rm L}_2(8)$ in these two special cases). Then for each prime $p \in \pi(G)$ we construct a Sylow $p$-subgroup $H_p$ of $G$. In addition, if $p \ne r$ then we construct a set of representatives $\mathcal{C} = \{y_1, \ldots, y_s\}$ of the conjugacy classes in $H_p$ of elements of order $p$, which we then partition $\mathcal{C} = \mathcal{C}_1 \cup \cdots \cup \mathcal{C}_{\ell}$ so that $y_i$ and $y_j$ are in the same subset if and only if they are conjugate in $G$. Write $\mathcal{C}_i = \{ y_{i,1}, \ldots, y_{i,k_i}\}$ for $i = 1, \ldots, \ell$. Then
\[
\widehat{Q}_p(G) = \sum_{i=1}^{\ell} |y_{i,1}^G|^{-1}\left(\sum_{j=1}^{k_j}|y_{i,j}^{H_p}|\right)^2
\]
and in each case we verify the bound $\Sigma(G) < 1$. For example, if $G = {}^2F_4(2)'$ then $n = 1755$ and we compute
\[
\Sigma(G) = \frac{346991}{449280},
\]
with the computation taking roughly $1$ second and 30MB of memory.
\end{proof}

For handling the remaining cases, the following result from \cite{BTh} will be a useful tool.

\begin{lem}\label{l:bounds}
Let $G \ne G_2(2)', {}^2F_4(2)', {}^2G_2(3)'$ be a simple exceptional group of Lie type over $\mathbb{F}_q$. Let $x_2 \in G$ be a semisimple element of odd prime order. In addition, if $q$ is odd then let $x_1 \in G$ be an involution. Then $|x_i^G| > \ell_i$, where $\ell_1$ and $\ell_2$ are given in Table \ref{tab:classes}.
\end{lem}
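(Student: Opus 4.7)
The plan is to derive the polynomial lower bounds $\ell_1$ and $\ell_2$ by a uniform case-by-case analysis across the ten families of simple exceptional groups of Lie type: $G_2(q)$, $F_4(q)$, $E_6(q)$, ${}^2E_6(q)$, $E_7(q)$, $E_8(q)$, ${}^3D_4(q)$, together with the Suzuki and Ree families ${}^2B_2(q)$, ${}^2G_2(q)$, ${}^2F_4(q)$. In each case the identity $|x^G| = |G|/|C_G(x)|$ reduces the task to producing a sharp upper bound on the maximum centralizer order, taken over all involution classes when $q$ is odd, and over all classes of semisimple elements of odd prime order.

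For the involution bound $\ell_1$, I would use the classification of involution classes in exceptional groups due to Aschbacher--Seitz: in each family this yields a short finite list of classes together with the Lie-theoretic structure of their centralizers, which are reductive subgroups of known type (for example, in $E_8(q)$ the two involution classes have centralizers of structure $D_8(q).2$ and $(E_7(q) \times A_1(q)).2$ modulo scalars). Taking the maximum of these centralizer orders and dividing $|G|$ by it gives the polynomial $\ell_1$. For the semisimple bound $\ell_2$, I would appeal to the description of centralizers of semisimple elements of prime order in adjoint simple algebraic groups due to Deriziotis: the connected centralizer $C_G(x_2)^\circ$ is a reductive subgroup of maximal rank whose root system is obtained by deleting nodes from the extended Dynkin diagram, and maximising $|C_G(x_2)|$ over all proper subsystem subgroups produced in this way, and over all Frobenius twists, yields $\ell_2$. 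For the twisted families ${}^2B_2(q)$, ${}^2G_2(q)$ and ${}^2F_4(q)$, I would work directly from the explicit conjugacy class tables of Suzuki and Ree rather than via the algebraic-group machinery.

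The main technical obstacle is extracting centralizer bounds that are simultaneously explicit, uniform across the ten families, and sharp enough to be useful when substituted into the probabilistic estimates of Lemma \ref{l:est}. A secondary difficulty is that the generic polynomial bounds degrade for small $q$; this is precisely why the three groups $G_2(2)'$, ${}^2G_2(3)'$ and ${}^2F_4(2)'$ are excluded from the statement of the lemma and are instead handled directly in Lemma \ref{l:ex_comp}.
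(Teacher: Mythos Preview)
Your proposed approach is correct in spirit and would yield the stated bounds: reducing to centralizer estimates via $|x^G|=|G|/|C_G(x)|$, then classifying involution centralizers using the Aschbacher--Seitz tables and semisimple centralizers via maximal-rank subsystem subgroups, is exactly the machinery underlying results of this type.

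However, the paper does not prove this lemma at all. Its entire proof is the single sentence ``This is a special case of \cite[Proposition 2.11]{BTh}'', citing the second author's earlier paper with Thomas. So your plan amounts to reconstructing the work that was done in that reference rather than matching what the present paper does. This is a legitimate difference worth noting: your route is self-contained and explains where the polynomial bounds come from, while the paper simply imports them as a black box. If you want to align with the paper, the correct ``proof'' is just the citation; if you want a genuine argument, your outline is the right one, though you should be aware that carrying it out in full detail across all ten families is a substantial bookkeeping exercise (which is presumably why the authors chose to cite rather than repeat it).
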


\begin{proof}
This is a special case of \cite[Proposition 2.11]{BTh}.
\end{proof}

{\small
\begin{table}
\[
\begin{array}{lllllll} \hline
G & \ell_1 & \ell_2 & & & &  \\ \hline
E_8(q) & q^{112} & (q-1)q^{113} & \mbox{\hspace{5mm}  } & G_2(q),\, q \geqs 3 & q^8 & (q-1)q^5 \\
E_7(q) & \frac{1}{2}(q-1)q^{53} & (q-1)q^{53} & & {}^3D_4(q) & q^{16} & (q-1)q^{17} \\
E_6(q) & q^{32} & q^{32} & & {}^2F_4(q),\, q \geqs 8 & \mbox{--} & (q-1)q^{17} \\
{}^2E_6(q) & (q-1)q^{31} & (q-1)q^{31} & & {}^2G_2(q),\, q \geqs 27 & (q-1)q^3 & \frac{1}{2}q^6 \\
F_4(q) & q^{16} & (q-1)q^{29} & & {}^2B_2(q) & \mbox{--} & \frac{1}{2}q^4 \\ \hline
\end{array}
\]
\caption{The lower bounds $|x_i^G|> \ell_i$ in Lemma \ref{l:bounds}}
\label{tab:classes}
\end{table}
}

We are now ready to prove Theorem \ref{t:ex_main}.

\begin{proof}[Proof of Theorem \ref{t:ex_main}]
Let $G$ be a finite simple exceptional group of Lie type over $\mathbb{F}_q$, where $q = r^f$ is a power of the prime $r$. Since a very similar argument applies in each case, we will only give details for the groups ${}^2E_6(q)$, $F_4(q)$ and $G_2(q)'$. It will be convenient to define
\[
\what{\b}(G) = \sum_{p \in \pi'(G)} \what{Q}_p(G). 
\]

\vs

\noindent \emph{Case 1. $G = {}^2E_6(q)$}

\vs

First assume $G = {}^2E_6(q)$ and note that  
\[
|G| = \frac{1}{d}q^{36}(q^2-1)(q^5+1)(q^6-1)(q^8-1)(q^9+1)(q^{12}-1),
\]
where $d = (3,q+1)$. In addition, we have $|N_G(H_r):H_r| = \frac{1}{d}(q^2-1)^2(q-1)^2$ and 
\[
\frac{|H_r||N_G(H_r)|}{|G|} = \frac{q^{72}(q^2-1)^2(q-1)^2}{d|G|} > \left(1-q^{-1}\right)^6,
\]
whence $\Sigma(G)<1$ if 
\begin{equation}\label{e:2e6_prob}
\what{\b}(G) \leqs \left(1-q^{-1}\right)^6.
\end{equation}

Fix a prime $p \in \pi'(G)$. For $p=2$, Lemma \ref{l:nt}(i) gives   
\[
|H_2| = 2^3((q^2-1)_2)^4((q+1)_2)^2 \leqs 2^7(q+1)^6 = a_1
\]
and Lemma \ref{l:bounds} states that $|x^G|>(q-1)q^{31} = b_1$ for every involution $x \in G$. So by appealing to Lemma \ref{l:est} we deduce that 
$\what{Q}_2(G) < a_1^2b_1^{-1}$, where $a_1$ and $b_1$ are defined by the equations they appear in above.
 
Now assume $p$ is odd and let $m$ be minimal such that $p$ divides $q^m-1$, in which case $m \in \{1,2,3,4,6,8,10,12,18\}$. By carefully considering each possibility for $m$ in turn, it is straightforward to show that $|H_p| \leqs 3^3(q+1)^6 = a_2$. For example, if $m = 6$ then $p \geqs 7$ and
\[
|H_p| = (q^6-1)_p(q^9+1)_p(q^{12}-1)_p = ((q^2-q+1)_p)^3.
\]
Since $|x^G|>(q-1)q^{31} = b_2$ for all $x \in G$ of order $p$ (see Lemma \ref{l:bounds}), Lemma \ref{l:est} implies that $\what{Q}_p(G) < a_2^2b_2^{-1}$.
Moreover, since each $p \in \pi'(G)$ divides $n = (q^4+1)(q^5+1)(q^9+1)(q^{12}-1)$, it follows that $|\pi'(G)| < \log n < 31\log q$ and thus  
\[
\what{\b}(G) < (1-\delta_{2,r})a_1^2b_1^{-1} + 31\log q \cdot a_2^2b_2^{-1},
\]
where $\delta_{2,r}$ is the usual Kronecker delta (so $\delta_{2,r} = 1$ if $r=2$, otherwise $\delta_{2,r} = 0$). 

This upper bound immediately implies that $\b(G) \to 0$ as $q \to \infty$, so the asymptotic statement in Theorem \ref{t:ex_main} holds for $G = {}^2E_6(q)$. In addition, it is routine to check that the inequality in \eqref{e:2e6_prob} is satisfied for all $q \geqs 3$. 

Finally, suppose $q=2$ and $p \in \pi'(G)$. Here $|\pi'(G)| = 7$ and $|H_p| \leqs 3^9 = a$. In addition, by inspecting the character table of $G$ (see \cite{GAPCTL}) we observe that $|x^G| \geqs 1319933815200=b$ for all $x \in G$ of order $p$. Therefore, $\what{\b}(G) \leqs 7a^2b^{-1} < 2^{-6}$
and the result follows.

\vs

\noindent \emph{Case 2. $G = F_4(q)$}

\vs

Now suppose $G = F_4(q)$. Here 
\[
|G| = q^{24}(q^2-1)(q^6-1)(q^8-1)(q^{12}-1) 
\]
and $|N_G(H_r):H_r| = (q-1)^4$, whence $\Sigma(G)<1$ if  
\begin{equation}\label{e:f4_prob}
\what{\b}(G) \leqs \left(1-q^{-1}\right)^4.
\end{equation}

Fix a prime $p \in \pi'(G)$. For $p=2$ we compute  
\[
|H_2| = 2^3((q^2-1)_2)^4 \leqs 2^7(q+1)^4 = a_1
\]
and we note that $|x^G|>q^{16} = b_1$ for every involution $x \in G$ (see Lemma \ref{l:bounds}). This gives $\what{Q}_2(G) < a_1^2b_1^{-1}$. On the other hand, if $p$ is odd then $|H_p| \leqs 3^2(q+1)^4 = a_2$ and  Lemma \ref{l:bounds} gives $|x^G|>(q-1)q^{29} = b_2$ for all $x \in G$ of order $p$, whence $\what{Q}_p(G) < a_2^2b_2^{-1}$. In addition, since $|\pi'(G)| \leqs 20\log q$, it follows that
\[
\what{\b}(G) < (1-\delta_{2,r})a_1^2b_1^{-1} + 20\log q \cdot a_2^2b_2^{-1}.
\]
Once again, it is clear from this bound that we have $\b(G) \to 0$ as $q \to \infty$. One can also check that the inequality in \eqref{e:f4_prob} is satisfied for all $q \ne 3$. 

Finally, suppose $q=3$. Here $|\pi'(G)| = 6$ and by embedding $H_2$ in a maximal subgroup $L = 2.\O_9(3) = {\rm Spin}_9(3)$, we can use {\sc Magma} to show that $i_2(H_2) = 1615$. The result now follows since 
\[
\what{\b}(G) \leqs 1615^2b_1^{-1} + 5a_2^2b_2^{-1} < \left(\frac{2}{3}\right)^4,
\]
where $b_1$, $a_2$ and $b_2$ are defined as above. 

\vs

\noindent \emph{Case 3. $G = G_2(q)'$}

\vs

Finally, suppose $G = G_2(q)'$. In view of Lemma \ref{l:ex_comp}, we may assume $q \geqs 5$. Since we have $|G| = q^{6}(q^2-1)(q^6-1)$ and $|N_G(H_r)| = q^6(q-1)^2$, it follows that $\Sigma(G)<1$ if  
\begin{equation}\label{e:g2_prob}
\what{\b}(G) \leqs \left(1-q^{-1}\right)^2.
\end{equation}

Fix a prime $p \in \pi'(G)$. For $p=2$ we have $|H_2| = ((q^2-1)_2)^2 \leqs 2^2(q+1)^2 = a_1$ and $|x^G| = q^4(q^4+q^2+1) = b_1$ for every involution $x \in G$, which yields $\what{Q}_2(G) \leqs a_1^2b_1^{-1}$. And for $p \geqs 3$ we find that $|H_p| \leqs 3(q+1)^2 = a_2$, while Lemma \ref{l:bounds} shows that $|x^G|>(q-1)q^{5} = b_2$ for all $x \in G$ of order $p$. Therefore, $\what{Q}_p(G) < a_2^2b_2^{-1}$ and since $|\pi'(G)| \leqs 6\log q$ we deduce that
\[
\what{\b}(G) < (1-\delta_{2,r})a_1^2b_1^{-1} + 6\log q \cdot a_2^2b_2^{-1} < \left(1-q^{-1}\right)^2
\]
for all $q \geqs 19$. In addition, we see that $\b(G) \to 0$ as $q \to \infty$, so the desired asymptotic statement holds in this case.

Finally, suppose $5 \leqs q \leqs 17$. Here it is easy to improve the above estimates in order to verify the bound in \eqref{e:g2_prob}. For example, suppose $q = 5$. Here $|H_2| = 2^6 = a_1$ and $|H_p| \leqs 31 = a_2$ for all odd primes $p \in \pi'(G) = \{2,3,7,31\}$. Therefore,
\[
\what{\b}(G) \leqs a_1^2b_1^{-1} + 3a_2^2b_2^{-1} = \frac{1958753}{8137500} < \left(\frac{4}{5}\right)^2
\]
as required, where $b_1$ and $b_2$ are defined as above.
\end{proof}

\section{Classical groups}\label{s:class}

In this section we prove Theorems \ref{t:main1} and \ref{t:main2} for classical groups. In addition, we complete the proof of Theorem \ref{t:main_asymptotic}.

Let $G$ be a finite simple classical group over $\mathbb{F}_q$, where $q = r^f$ for a prime $r$, and let $V$ be the natural module for $G$. Due to the existence of exceptional isomorphisms between some of the low-dimensional classical groups (see \cite[Proposition 2.9.1]{KL}), we may assume $G$ is one of the following:
\[
\begin{array}{ll}
{\rm L}_n(q) & \mbox{$n \geqs 2$, with $q \geqs 5$ if $n=2$} \\
{\rm U}_n(q) & \mbox{$n \geqs 3$} \\ 
{\rm PSp}_n(q) & \mbox{$n \geqs 4$ even, $(n,q) \ne (4,2)$} \\
\O_n(q) & \mbox{$n \geqs 7$ odd, $q$ odd} \\
{\rm P\O}_n^{\pm}(q) & \mbox{$n \geqs 8$ even.} 
\end{array}
\]

\renewcommand{\arraystretch}{1.4}

As before, set $\pi'(G) = \pi(G) \setminus \{r\}$ and define
\begin{equation}\label{e:defs}
\begin{array}{llll}
\displaystyle \a(G) = \sum_{p \in \pi(G)} Q_p(G) & & & \displaystyle \b(G) = \sum_{p \in \pi'(G)} Q_p(G) \\
\displaystyle \gamma(G) = \sum_{p \in \pi'(G) \setminus \{2\}} Q_p(G) & & & 
\displaystyle \what{\b}(G) = \sum_{p \in \pi'(G)} \what{Q}_p(G)
\end{array}
\end{equation}
In addition, define $\Sigma(G)$ as in \eqref{e:sigma} and recall that $\a(G)<1$ if $\Sigma(G)<1$. Our main result for classical groups is the following.

\renewcommand{\arraystretch}{1}

\begin{thm}\label{t:class_main}
Let $G$ be a finite simple classical group over $\mathbb{F}_q$.

\begin{itemize}\addtolength{\itemsep}{0.2\baselineskip}
\item[{\rm (i)}] If $G \ne {\rm U}_4(2)$, then $\a(G)<1$.
\item[{\rm (ii)}] We have $\gamma(G) \to 0$ as $|G| \to \infty$. 
\item[{\rm (iii)}] If $G \ne {\rm L}_2(q)$, then $\b(G) \to 0$ as $|G| \to \infty$.
\end{itemize}
\end{thm}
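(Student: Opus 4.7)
The plan is to mirror the approach of Theorem \ref{t:ex_main}, treating the five families ${\rm L}_n(q)$, ${\rm U}_n(q)$, ${\rm PSp}_n(q)$, $\Omega_n(q)$ ($n$ odd), and ${\rm P\Omega}_n^{\pm}(q)$ separately. In each family, $|N_G(H_r):H_r|$ is an explicit product of cyclotomic factors, so the defining-characteristic ratio $|H_r||N_G(H_r)|/|G|$ is a known rational function of $n$ and $q$; the goal is to bound $\widehat{\beta}(G)$ below this ratio, which secures $\Sigma(G)<1$ and hence part (i), and simultaneously to show that the resulting per-prime bounds on $\widehat{Q}_p(G)$ for odd $p \ne r$ tend to zero, delivering part (ii). Part (iii) then reduces to showing that the $p=2$ contribution also decays; the Mersenne-prime example of Remark \ref{r:main3} makes clear that ${\rm L}_2(q)$ is genuinely exceptional and must be excluded.

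For each odd prime $p \in \pi'(G)$, let $m = m_p(q)$ be minimal with $p$ dividing $q^m - \varepsilon$, where the sign is dictated by the family. Using Lemma \ref{l:nt}, $|H_p|$ can be written as an explicit product; structurally, the Sylow $p$-subgroup of a classical group is a wreath product of cyclic $p$-groups built on a decomposition of the natural module into non-degenerate subspaces whose dimension is controlled by $m$, so $|H_p| \leqs p^{a}((q^m - \varepsilon)_p)^{\lfloor n/e \rfloor}$, with the prefactor $p^a$ controlled by Lemma \ref{l:factorial}. For the lower bound on $|x^G|$, I will use the standard centraliser lists for semisimple elements of prime order in classical groups, playing the role of Lemma \ref{l:bounds} in the exceptional case. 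Lemma \ref{l:est} then yields $\widehat{Q}_p(G) \leqs f_p(n,q)$, and summing over such $p$ with $|\pi'(G)| = O(n \log q)$ by Lemma \ref{l:primes} gives a total bound on the odd-$p$ contribution to $\widehat{\beta}(G)$ that is small once $n$ or $q$ is sufficiently large.

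The main obstacle is the prime $p=2$ in odd characteristic, together with the smallest semisimple classes of odd prime order (for example the pseudo-reflections in ${\rm L}_n(q)$ with $p \mid q-1$, whose centraliser is of ${\rm GL}_{n-1}(q) \times (q-1)$-type, making $|H_p|^2/|x^G|$ large). For such classes the uniform bound $|x^G \cap H_p| \leqs |H_p|$ is too crude, so I will peel them off the sum and estimate them by embedding $H_p \leqs L$ in a suitable overgroup $L$, typically a subsystem or Levi subgroup or a subfield subgroup, and using $|x^G \cap H_p| \leqs |x^G \cap L|$ together with tabulated class data for $L$. For $p=2$ in odd characteristic the Sylow subgroup has a layered structure depending on the $2$-parts of $q \pm 1$ and, in the orthogonal case, on the type $\varepsilon$; two distinct $G$-classes of involutions can share the same Jordan form there, so the estimate has to be split by type. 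This is the most technical step but it is ultimately routine given the explicit wreath-product description and the known centraliser orders.

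The generic estimate will leave a finite list of small pairs $(n,q)$ for which the bound is insufficient. These will be handled computationally in {\sc Magma} as in Lemma \ref{l:ex_comp}: construct $G$ in a convenient permutation representation, build $H_p$ for each $p \in \pi(G)$, partition the prime-order classes of $H_p$ by $G$-conjugacy, compute $\widehat{Q}_p(G)$ or $Q_p(G)$ exactly, and verify $\Sigma(G)<1$. The inequality $\Sigma(G)<1$ fails precisely once, at $G = {\rm U}_4(2) \cong {\rm PSp}_4(3)$, giving the sole exception in part (i); for this group Conjecture \ref{c:ls} is verified directly by exhibiting an element $x \in G$ with $H_p \cap H_p^x = 1$ for every $p \in \pi(G)$.
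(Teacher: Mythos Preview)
Your proposal is correct and follows essentially the same route as the paper: split by family, handle the defining prime via the Borel ratio, stratify the non-defining primes by the multiplicative order $m$ of $q$ modulo $p$, peel off the smallest-support classes by counting inside an imprimitive wreath-type overgroup of $H_p$ (the paper uses subgroups of type ${\rm GL}_1(q)\wr S_n$, ${\rm GU}_1(q)\wr S_n$, ${\rm O}_2^{\pm}(q)\wr S_\ell$, etc., rather than Levi or subfield subgroups), and finish the residual small $(n,q)$ in {\sc Magma}. One correction to your endgame: the inequality $\Sigma(G)<1$ actually fails for several groups beyond ${\rm U}_4(2)$ --- for instance ${\rm L}_3(2)$, ${\rm L}_4(3)$ and ${\rm P\Omega}_8^{+}(2)$ --- and there the paper drops from $\widehat{Q}_p$ to the exact $Q_p$ via a double-coset count to recover $\alpha(G)<1$; only for ${\rm U}_4(2)$ does even $\alpha(G)<1$ fail.
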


\begin{rem}\label{r:class}
Let us record a couple of comments on the statement of Theorem \ref{t:class_main}.

\begin{itemize}\addtolength{\itemsep}{0.2\baselineskip}
\item[{\rm (a)}] First note that the group $G = {\rm U}_4(2) \cong {\rm PSp}_4(3)$ in part (i) is a genuine exception. Indeed, we have $\pi(G) = \{2,3,5\}$ and using {\sc Magma} we compute
\[
Q_2(G) = \frac{71}{135},\; Q_3(G) = \frac{79}{160},\; Q_5(G) = \frac{1}{1296}, 
\]
which means that $\a(G) = 2645/2592 > 1$. 

\item[{\rm (b)}] Suppose $G = {\rm L}_2(q)$ with $q$ odd. Here the proof of Lemma \ref{l:l2} shows that $\gamma(G) \to 0$ as $q \to \infty$.
However, the asymptotic behaviour of $Q_2(G)$ is rather different and we are not able to conclude that $Q_2(G) \to 0$ as $q \to \infty$. For example, as discussed in Remark \ref{r:main3}, we have
\[
Q_2(G) = \frac{1}{2}+\frac{q+3}{2q(q-1)}
\]
if $q$ is a Mersenne prime. This explains the condition $G \ne {\rm L}_2(q)$ in part (iii).
\end{itemize}
\end{rem}

\begin{rem}\label{r:u42}
Let us briefly explain how we establish Theorems \ref{t:main1} and \ref{t:main2} in the special case $G = {\rm U}_4(2)$, where we have $\pi(G) = \{2,3,5\}$ and $\a(G)>1$. 

\begin{itemize}\addtolength{\itemsep}{0.2\baselineskip}
\item[{\rm (a)}] In order to verify Conjecture \ref{c:ls}, we first use {\sc Magma} to construct $G$ as a permutation group of degree $45$. We fix a Sylow $5$-subgroup $H_5$ of $G$ and we construct the complete set $\Gamma_p$ of Sylow $p$-subgroups of $G$ for $p \in \{2,3\}$. Then for each pair $(H_2,H_3) \in \Gamma_2 \times \Gamma_3$ we use random search to find an element $x \in G$ such that $H_p \cap H_p^x = 1$ for all $p \in \{2,3,5\}$. This is an entirely straightforward computation and it establishes Conjecture \ref{c:ls} for $G = {\rm U}_4(2)$.

\item[{\rm (b)}] Similarly, to check Theorem \ref{t:main2}, we first use the function \texttt{NilpotentSubgroups} to construct a complete set of representatives of the conjugacy classes of nilpotent subgroups of $G$ (there are $48$ such classes). Then for each pair $(A,B)$ of representatives we use random search to identify an element $x \in G$ with $A \cap B^x = 1$. Once again, this is an easy calculation.
\end{itemize}
\end{rem}

For a positive integer $m$ we set
\begin{equation}\label{e:pm}
\mathcal{P}_m = \{ p \in \pi'(G) \,:\, \mbox{$p \ne 2$ is a primitive prime divisor of $q^m-1$}\},
\end{equation}
which means that each $p \in \mathcal{P}_m$ divides $q^m-1$, but it does not divide $q^i-1$ for any $1 \leqs i < m$. Since every prime divisor $p \ne r$ of $|G|$ is a primitive prime divisor of $q^m-1$ for some $m$, it follows that 
\[
\what{\b}(G) = (1-\delta_{2,r})\what{Q}_2(G) + \sum_{m \geqs 1}\sum_{p \in \mathcal{P}_m} \what{Q}_p(G)
\]
and we define
\begin{equation}\label{e:alpha}
\rho_0 = \what{Q}_2(G),\;\; \rho_1 = \sum_{p \in \mathcal{P}_1} \what{Q}_p(G), \;\; \rho_2 = \sum_{p \in \mathcal{P}_2} \what{Q}_p(G), \;\; \rho_3 = \sum_{m \geqs 3}\sum_{p \in \mathcal{P}_m} \what{Q}_p(G).
\end{equation}
So in terms of this notation, we have 
\[
\what{\b}(G) = (1-\delta_{2,r})\rho_0 + \rho_1 + \rho_2 + \rho_3.
\]

For each finite simple classical group $G$ over $\mathbb{F}_q$, we will produce an explicit upper bound of the form $\rho_i \leqs f_i(n,q)$ for each $i \in \{0,1,2,3\}$, where $n$ is the dimension of the natural module for $G$. By combining these estimates, we will show that
\[
\sum_{i=0}^3 f_i(n,q) < \frac{|H_r||N_G(H_r)|}{|G|}
\]
with the possible exception of a handful of special cases, which allows us to conclude that $\Sigma(G)<1$ and thus $\a(G)<1$, as in Theorem \ref{t:class_main}(i). In addition, it will be clear in every case that we have $f_i(n,q) \to 0$ as $n$ or $q$ tends to infinity (excluding the special case $i=0$ with $G = {\rm L}_2(q)$ and $q$ odd) and this is how we establish the asymptotic statements in parts (ii) and (iii) of Theorem \ref{t:class_main}.

In order to derive suitable upper bounds $\rho_i \leqs f_i(n,q)$, we need to study the conjugacy classes of prime order elements in $G$. With this aim in mind, fix a prime $p \in \pi'(G)$ and let $x \in G$ be an element of order $p$. We may view $x$ as an element of ${\rm PGL}(V)$, where we recall that $V$ is the natural module for $G$. In particular, we can write $x = \hat{x}Z$, where $\hat{x} \in {\rm GL}(V)$ and $Z$ is the centre of ${\rm GL}(V)$, and it will be useful to define the positive integer
\begin{equation}\label{e:nu}
\nu(x) = \min\{\dim [\bar{V}, \lambda \hat{x}] \,:\, \l \in K^{\times} \},
\end{equation}
where $K$ is the algebraic closure of $\mathbb{F}_q$ and $\bar{V} = V \otimes K$. Notice that $\nu(x)$ coincides with the codimension of the largest eigenspace of $\hat{x}$ on $\bar{V}$, which explains why it is sometimes referred to as the \emph{support} of $x$, in analogy with the more familiar definition of support for permutations. Bounds on $|x^G|$ in terms of $\nu(x)$ are given in \cite[Section 3]{FPR2}. In addition, we refer the reader to \cite[Chapter 3]{BGiu} for detailed information on the conjugacy classes of semisimple elements in $G$ of prime order.

We now consider each family of classical groups in turn, beginning with the linear groups. Throughout this section, we will repeatedly refer to the notation defined in \eqref{e:defs}, \eqref{e:pm}, \eqref{e:alpha} and \eqref{e:nu}.

\subsection{Linear groups}\label{ss:linear}

In this section we assume $G = {\rm L}_n(q)$ is a linear group. Note that the low-dimensional groups with $n \in \{2,3\}$ require special attention and they will 
be handled separately in Lemmas \ref{l:l3} and \ref{l:l2}. 

\begin{lem}\label{l:ln}
The conclusion to Theorem \ref{t:class_main} holds when $G = {\rm L}_n(q)$ and  $n \geqs 4$.
\end{lem}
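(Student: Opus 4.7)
The plan is to apply the general framework laid out at the start of this section. Writing $n$ for the dimension of the natural module, an immediate calculation gives
\[
\frac{|H_r||N_G(H_r)|}{|G|} = \frac{q^{n(n-1)/2}(q-1)^{n-1}}{\prod_{i=2}^n(q^i-1)} = \frac{(1-q^{-1})^{n-1}}{\prod_{i=2}^n(1-q^{-i})},
\]
so by \eqref{e:sigma} it suffices to find an upper bound $\what{\b}(G) \leqs f(n,q)$ strictly smaller than this quantity. I will split $\what{\b}(G) = (1-\delta_{2,r})\rho_0 + \rho_1 + \rho_2 + \rho_3$ as in \eqref{e:alpha} and derive an explicit $f_i(n,q)$ for each summand. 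Since $n \geqs 4$, the anomalous behaviour of $\rho_0$ in ${\rm L}_2(q)$ does not occur, and each $f_i(n,q)$ will tend to $0$ as $n$ or $q$ tends to infinity, yielding parts (ii) and (iii) of Theorem \ref{t:class_main}.

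For the tail $\rho_3$, any $x \in G$ of order $p \in \mathcal{P}_m$ with $m \geqs 3$ satisfies $\nu(x) \geqs m$, so the class-size lower bounds of \cite[Section 3]{FPR2} give $|x^G| \geqs c\, q^{m(n-m)}$ for an absolute constant $c$. Combining this with the trivial bound $|x^G \cap H_p| \leqs |H_p|$, using Lemma \ref{l:nt} to control the $p$-part of $|G|$ and Lemma \ref{l:primes} to bound $|\mathcal{P}_m|$, and then summing over $m$ yields a rapidly decaying $f_3(n,q)$. An essentially identical analysis, now with $\nu(x) \geqs 2$, handles $\rho_2$.

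The delicate contributions come from $\rho_0$ (involutions, when $q$ is odd) and $\rho_1$ (semisimple elements of odd prime order $p \mid q-1$). Here the smallest classes have $\nu(x)=1$ and the trivial bound $|x^G \cap H_p| \leqs |H_p|$ is too crude. I would instead embed $H_p$ in a convenient maximal overgroup $L$ and use $|x^G \cap H_p| \leqs |x^G \cap L|$; the natural choices are a $P_1$-type parabolic stabilising a $1$-space (or hyperplane) and the normaliser of a split maximal torus, between them accommodating the relevant Sylow $p$-subgroups. The detailed conjugacy-class data used to count order-$p$ elements in $L$ is extracted from \cite[Chapter 3]{BGiu}, while the remaining classes with $\nu(x) \geqs 2$ are absorbed by Lemma \ref{l:est}.

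The main obstacle I expect is the finite collection of small cases — roughly $n \in \{4,5,6,7\}$ with $q \in \{2,3,4\}$ — where the asymptotic estimates do not yet give enough slack to beat $(1-q^{-1})^{n-1}/\prod_{i=2}^n(1-q^{-i})$. These will be handled by sharpening the bound on $|x^G \cap H_p|$, either by replacing $|H_p|$ with an exact count of order-$p$ elements in a suitable overgroup via the \textsf{GAP} Character Table Library \cite{GAPCTL}, or by direct verification in {\sc Magma}. Once these residual cases are cleared, combining the $f_i$ with $Q_r(G) \leqs 1 - |H_r||N_G(H_r)|/|G|$ gives $\Sigma(G) < 1$, establishing part (i), and the decay of the $f_i$ delivers parts (ii) and (iii).
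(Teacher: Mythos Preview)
Your plan is essentially the paper's own: split $\what{\b}(G)=(1-\delta_{2,r})\rho_0+\rho_1+\rho_2+\rho_3$, dispose of $\rho_2,\rho_3$ with the trivial bound $|x^G\cap H_p|\leqs|H_p|$ against the class-size lower bounds, and for $\rho_0,\rho_1$ embed $H_p$ in the normaliser of a split torus (type ${\rm GL}_1(q)\wr S_n$) to count the elements with $\nu(x)=1$ separately, then mop up small $(n,q)$ by computation. One point to watch in execution: when $q\equiv 3\pmod 4$ the Sylow $2$-subgroup does \emph{not} embed in a subgroup of type ${\rm GL}_1(q)\wr S_n$ (since $(q+1)_2>(q-1)_2=2$), and your $P_1$-parabolic alternative also fails when $n$ is even (the index $(q^n-1)/(q-1)$ is then even); the paper handles this by observing that $\nu(x)\geqs 2$ automatically when $n$ is even, and by passing to a subgroup of type $({\rm GL}_2(q)\wr S_\ell)\oplus{\rm GL}_1(q)$ when $n=2\ell+1$ is odd.
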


\begin{proof} 
Set $d = (n,q-1)$ and observe that 
\[
|G| = \frac{1}{d}q^{\frac{1}{2}n(n-1)}\prod_{i=2}^n(q^i-1), \;\;  |N_G(H_r)| = \frac{1}{d}q^{\frac{1}{2}n(n-1)}(q-1)^{n-1}.
\]
Therefore, the desired bound $\Sigma(G)<1$ holds if 
\begin{equation}\label{e:ln_prob}
\what{\b}(G) \leqs \left(1-q^{-1}\right)^{n-1}.
\end{equation}

Fix a prime $p \in \pi'(G)$ and let $x \in G$ be an element of order $p$. Define the integer $\nu(x)$ as in \eqref{e:nu}. We begin by handling the groups with $n \geqs 5$.

\vs

\noindent \emph{Case 1. $n \geqs 5$}

\vs

First assume $p = 2$, so $q$ is odd. By inspecting \cite[Table B.1]{BGiu} we observe that
\[
|x^G| \geqs \frac{|{\rm GL}_n(q)|}{|{\rm GL}_{n-2}(q)||{\rm GL}_{2}(q)|} > \frac{1}{2}q^{4n-8} = b_2 
\]
if $\nu(x) \geqs 2$, otherwise
\[
|x^G| = \frac{|{\rm GL}_n(q)|}{|{\rm GL}_{n-1}(q)||{\rm GL}_{1}(q)|} = \frac{q^{n-1}(q^n-1)}{q-1} = b_1.
\]

Suppose $q \equiv 1 \imod{4}$. By applying Lemmas \ref{l:nt}(i) and \ref{l:factorial} we compute 
\[
|H_2| = \frac{1}{(d)_2}((q-1)_2)^{n-1}(n!)_2 \leqs 2^{n-1}(q-1)^{n-1} = a_2
\]
and we deduce that $H_2 < L < G$, where $L$ is a subgroup of type ${\rm GL}_1(q) \wr S_n$. That is to say, $L$ is the stabiliser in $G$ of a direct sum decomposition $V = V_1 \oplus \cdots \oplus V_n$ of the natural module for $G$, where each $V_i$ is a $1$-dimensional subspace. If $\nu(x) = 1$ then replacing $x$ by a suitable conjugate if necessary, we may assume $x$ lifts to a diagonal matrix $\hat{x} = {\rm diag}(-I_{n-1},I_1)$ in ${\rm GL}_n(q)$. And then by counting the number of such involutions in the subgroup ${\rm GL}_1(q) \wr S_n < {\rm GL}_n(q)$, we deduce that there are no more than
\[
a_1 = \binom{n}{1}+\binom{n}{2}|{\rm GL}_1(q)| = n+\frac{1}{2}n(n-1)(q-1)
\]
involutions $x \in H_2$ with $\nu(x) =1$. So Lemma \ref{l:est} implies that 
$\rho_0 < a_1^2b_1^{-1} + a_2^2b_2^{-1}$, where the $a_i$ and $b_i$ terms are defined by the equations they appear in above.

Now assume $q \equiv 3 \imod{4}$. If $n = 2\ell$ is even then 
\[
|H_2| = \frac{1}{4}((q-1)_2(q^2-1)_2)^{\ell}(\ell!)_2 \leqs 2^{3\ell-2}(q+1)^\ell = a_2
\]
and we note that $(q-1)_2 \leqs (n)_2$, which implies that $\nu(x) \geqs 2$ (see \cite[Table B.1]{BGiu}). It follows that $\rho_0 < a_2^2b_2^{-1}$. On the other hand, if $n = 2\ell+1$ is odd, then 
\[
|H_2| = ((q-1)_2(q^2-1)_2)^{\ell}(\ell!)_2 \leqs 2^{3\ell}(q+1)^\ell = a_2
\]
and $H_2$ is contained in a subgroup $L<G$ of type $({\rm GL}_2(q) \wr S_\ell) \oplus {\rm GL}_1(q)$. By considering the number of involutions $x \in L$ with $\nu(x) = 1$, we deduce that $\rho_0 < a_1^2b_1^{-1} + a_2^2b_2^{-1}$, where
\[
a_1 = 1 + \binom{\ell}{1}\frac{|{\rm GL}_2(q)|}{|{\rm GL}_1(q)|^2} = 1+\ell q(q+1).
\]

To complete the argument for $n \geqs 5$ we may assume $p$ is odd, in which case $p \in \mathcal{P}_m$ for some positive integer $m \leqs n$. Set $k = \lfloor n/m \rfloor$ and write $\gamma = (4,q-1)$ and $\delta = (4,q+1)$.

\renewcommand{\arraystretch}{1.4}

First assume $m \geqs 3$, in which case $p \geqs 5$. By inspecting \cite[Section 3.2.1]{BGiu}, we see that 
\[
|x^G| \geqs \left\{ \begin{array}{ll}
\frac{|{\rm GL}_n(q)|}{|{\rm GL}_{n-3}(q)||{\rm GL}_1(q^3)|} & \mbox{if $n \ne 6$} \\
\frac{|{\rm GL}_6(q)|}{|{\rm GL}_{2}(q^3)|} & \mbox{if $n=6$}
\end{array}\right.
\]
and thus $|x^G| > \frac{1}{2}q^{6n-12} = b_3$ for all $n \geqs 5$. In addition, by applying Lemmas \ref{l:nt}(ii) and \ref{l:factorial}, we observe that
\[
|H_p| = \prod_{i=1}^{k}(q^{mi}-1)_p = ((q^m-1)_p)^{k}(k!)_p < q^{5n/4} = a_3
\]
since $(k!)_p < p^{k/(p-1)} < q^{m(n/4m)} = q^{n/4}$. This yields $\what{Q}_p(G) < a_3^2b_3^{-1}$. Since there are $n-2$ possibilities for $m$ in the range $3 \leqs m \leqs n$, and since each $q^m-1$ has at most $n\log q$ distinct prime divisors, we deduce that  
\[
\rho_3 < n(n-2)\log q \cdot a_3^2b_3^{-1}.
\]

\renewcommand{\arraystretch}{1}

Next assume $p \in \mathcal{P}_2$, so $p$ divides $(q+1)/\delta$. Here 
\[
|x^G| \geqs \frac{|{\rm GL}_n(q)|}{|{\rm GL}_{n-2}(q)||{\rm GL}_1(q^2)|} >\frac{1}{2}q^{4n-6} = b_4
\]
and 
\[
|H_p| = ((q+1)_p)^{k}(k!)_p \leqs \left(\frac{1}{\delta}(q+1)\right)^{3n/4} = a_4,
\]
which means that $\what{Q}_p(G) < a_4^2b_4^{-1}$ and $\rho_2 < \log((q+1)/\delta) \cdot a_4^2b_4^{-1}$.

Finally, suppose $p \in \mathcal{P}_1$. Here $p$ divides $(q-1)/\gamma$ and 
\[
|H_p| = \frac{1}{(d)_p}((q-1)_p)^{n-1}(n!)_p \leqs \left(\frac{1}{\gamma}(q-1)\right)^{3n/2-1} = a_6.
\]
In particular, we notice that $H_p$ is contained in a subgroup $L<G$ of type ${\rm GL}_1(q) \wr S_n$. If $\nu(x) \geqs 2$, then $|x^G| > \frac{1}{2}q^{4n-8} = b_6$. Now assume $\nu(x) = 1$, which means that some $G$-conjugate of $x$ lifts to a diagonal matrix of the form $\hat{x} = {\rm diag}(\l, I_{n-1}) \in {\rm GL}_n(q)$, where $\l \in \mathbb{F}_{q}^{\times}$ has order $p$. It follows that 
\[
|x^G| = \frac{|{\rm GL}_n(q)|}{|{\rm GL}_{n-1}(q)||{\rm GL}_1(q)|} = \frac{q^{n-1}(q^n-1)}{q-1} = b_5
\]
and by counting in $L$ we deduce that there are no more than 
\[
\binom{n}{1}(p-1) \leqs n((q-1)/\gamma-1) = a_5
\]
such elements in $H_p$. This yields $\rho_1 < \log((q-1)/\gamma) \cdot \left(a_5^2b_5^{-1} + a_6^2b_6^{-1}\right)$.

It is clear in every case that the given upper bound on $\rho_i$ tends to $0$ as $n$ or $q$ tends to infinity, so the asymptotic statements in parts (ii) and (iii) of Theorem \ref{t:class_main} hold when $G = {\rm L}_n(q)$ and $n \geqs 5$. In addition, if $n \geqs 7$ then it is routine to check that the given estimates imply that the inequality in \eqref{e:ln_prob} is satisfied unless $(n,q)$ is one of the following:
\begin{equation}\label{e:list1}
(7,2), \, (7,3), \, (8,2), \, (8,3), \, (9,2), \, (9,3), \, (10,2), \, (10,3), \, (12,3).
\end{equation}

Suppose $(n,q) = (7,2)$. If $p \in \mathcal{P}_m$ with $m \geqs 3$ then $|H_p| \leqs 127$ and we note that $|\pi'(G)| = 5$, $\mathcal{P}_2 = \{3\}$, $\mathcal{P}_1 = \emptyset$ and $|H_3| = 3^4$. Therefore,
\[
\what{\b}(G) < 4a_3^2b_3^{-1} + a_4^2b_4^{-1} < 2^{-6}
\]
where $a_3 = 127$, $a_4 = 3^4$ and $b_3,b_4$ are defined as above. Similarly, if $(n,q) = (7,3)$ then $|\pi'(G)| = 6$, $\mathcal{P}_1 = \mathcal{P}_2 =  \emptyset$ and $|H_2| = 2^{13}$. In addition, $|H_p| \leqs 1093$ for all odd primes $p \in \pi'(G)$ and thus  
\[
\what{\b}(G) < a_1^2b_1^{-1} + a_2^2b_2^{-1} + 5a_3^2b_3^{-1} < \left(\frac{2}{3}\right)^6,
\]
where $a_2 = 2^{13}$, $a_3 = 1093$ and $a_1,b_1,b_2,b_3$ are defined as above. All of the other cases in \eqref{e:list1} can be handled in an entirely similar fashion.

Now assume $n \in \{5,6\}$. Here we observe that $|H_p| \leqs c((q-1)/\gamma)^{n-1}$ for all $p \in \mathcal{P}_1$, where $c = 5$ if $n=5$ and $c = 9$ if $n=6$ (since $(5!)_p \leqs 5$ and $(6!)_p \leqs 3^2$ for all odd primes $p$). Then by setting $a_6 = c((q-1)/\gamma)^{n-1}$, one can check that our previous bounds are sufficient unless $(n,q)$ is one of the following:
\[
(5,2), \, (5,3), \, (5,4), \, (5,5),\, (6,2),\, (6,3). 
\]
All of these special cases can be handled using {\sc Magma}. For example, if $(n,q) = (5,2)$ then we compute 
\[
\Sigma(G) = \frac{4483687}{4999680} < 1,
\]
where $\Sigma(G)$ is defined in \eqref{e:sigma}.

\vs

\noindent \emph{Case 2. $n=4$}

\vs

Now assume $n=4$. If $p=2$ and $q \equiv 1 \imod{4}$, then 
$\rho_0 \leqs a_1^2b_1^{-1} + a_2^2b_2^{-1}$, where 
\[
a_1 = \binom{4}{1}+\binom{4}{2}(q-1), \;\; b_1 = \frac{|{\rm GL}_4(q)|}{|{\rm GL}_3(q)||{\rm GL}_1(q)|} = q^3(q+1)(q^2+1)
\]
and
\[
a_2 = 2(q-1)^3, \;\; b_2 =  \frac{|{\rm GL}_4(q)|}{2|{\rm GL}_2(q^2)|} = \frac{1}{2}q^4(q-1)(q^3-1),
\]
noting that $|H_2| \leqs a_2$. Similarly, if $p=2$ and $q \equiv 3 \imod{4}$, then $\nu(x) = 2$ (see \cite[Table B.1]{BGiu}) and Lemma \ref{l:est} implies that $\rho_0 \leqs a_2^2b_2^{-1}$, where $|H_2| \leqs 8(q+1)^2 = a_2$ and $b_2$ is defined as above.

Now suppose $p \geqs 3$ is odd and define $\gamma,\delta$ as before. Note that $p \in \mathcal{P}_m$ with $m \in \{1,2,3,4\}$. If $m \in \{3,4\}$ then $|H_p| \leqs q^2+q+1 = a_3$ and 
\[
|x^G| \geqs \frac{|{\rm GL}_4(q)|}{|{\rm GL}_1(q^4)|} = q^6(q-1)(q^2-1)(q^3-1) = b_3,
\]
which means that
\[
\rho_3 \leqs 2\log(q^2+q+1) \cdot a_3^2b_3^{-1}.
\]
Similarly, if $m=2$ then $|H_p| \leqs ((q+1)/\delta)^2 = a_4$ and 
\[
|x^G| \geqs \frac{|{\rm GL}_4(q)|}{|{\rm GL}_2(q^2)|} = q^4(q-1)(q^3-1) = b_4,
\]
which yields $\rho_2 \leqs \log((q+1)/\delta) \cdot a_4^2b_4^{-1}$. 

Finally, suppose $m=1$, so $p$ divides $(q-1)/\gamma$ and $H_p$ is contained in a subgroup of type ${\rm GL}_1(q) \wr S_4$. In particular, $|H_p| \leqs 3((q-1)/\gamma)^3 = a_6$. If $\nu(x) = 1$ then 
\[
|x^G| = \frac{|{\rm GL}_4(q)|}{|{\rm GL}_3(q)||{\rm GL}_1(q)|} = q^3(q+1)(q^2+1) = b_5
\]
and by arguing as in Case 1 we see that there are no more than $a_5 = 4((q-1)/\gamma-1)$ such elements in $H_p$. For $\nu(x) \geqs 2$ we have
\[
|x^G| \geqs \frac{|{\rm GL}_4(q)|}{|{\rm GL}_2(q)|^2} = q^4(q^2+1)(q^2+q+1) = b_6
\]
and it follows that $\rho_1 \leqs \log((q-1)/\gamma)\cdot(a_5^2b_5^{-1} + a_6^2b_6^{-1})$.

As before, it is clear that each upper bound on $\rho_i$ tends to $0$ as $q$ tends to infinity. In addition, for $q \geqs 4$ we find that our bounds are good enough to give the desired inequality in \eqref{e:ln_prob} with $n=4$. In the two remaining cases we compute $\Sigma(G) = 9799/10080$ when $q =2$, whereas $\Sigma(G) = 388757/379080$ when $q = 3$. So this gives the result for $q = 2$, but further work is needed when $q = 3$. In the latter case, we have $\pi(G) = \{2,3,5,13\}$ and for each $p \in \pi(G)$ we can use {\sc Magma} to compute
\[
Q_p(G) = 1 - \frac{s|H_p|}{|G:H_p|},
\]
where $s$ is the number of $(H_p,H_p)$ double cosets in $G$ of size $|H_p|^2$. This yields
\[
Q_2(G) = \frac{829}{3645},\; Q_3(G) = \frac{1351}{2080}, \; Q_5(G) = \frac{1}{75816},\; Q_{13}(G) =  \frac{1}{155520}
\]
and thus $\a(G) < 1$ as required.
\end{proof}

To complete the proof of Theorem \ref{t:class_main} for linear groups, it just remains for us to handle ${\rm L}_3(q)$ and ${\rm L}_2(q)$.  

\begin{lem}\label{l:l3}
The conclusion to Theorem \ref{t:class_main} holds when $G = {\rm L}_3(q)$.
\end{lem}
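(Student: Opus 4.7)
The plan is to follow the template of the $n \geqs 4$ case in Lemma \ref{l:ln}, specialising to $n=3$ and tracking the three primitive prime divisor classes $\mathcal{P}_1, \mathcal{P}_2, \mathcal{P}_3$ separately. First I compute
\[
\frac{|H_r||N_G(H_r)|}{|G|} = \frac{q^3(q-1)^2}{(q^2-1)(q^3-1)} = \frac{q^3}{(q+1)(q^2+q+1)},
\]
so the bound $\Sigma(G)<1$ reduces to showing $\what{\b}(G) < q^3/((q+1)(q^2+q+1))$, a quantity approximately equal to $1 - 2/q$ for large $q$.

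For each odd prime $p \in \pi'(G)$ I apply Lemma \ref{l:est} after bounding $|H_p|$ via Lemma \ref{l:nt}(ii) and bounding $|x^G|$ from below using the centraliser in ${\rm GL}_3(q)$ of a semisimple element of prime order. Write $d = (3,q-1)$. For $p \in \mathcal{P}_3$ the Sylow $p$-subgroup is cyclic with $|H_p| \leqs q^2+q+1$ and centraliser ${\rm GL}_1(q^3)$, giving $|x^G| \geqs q^3(q-1)(q^2-1)/d$ and hence $\rho_3 = O(q^{-1}\log q)$. For $p \in \mathcal{P}_2$ the centraliser is of type ${\rm GL}_1(q^2) \times {\rm GL}_1(q)$, yielding $|H_p| \leqs q+1$ and $|x^G| \geqs q^3(q^3-1)/d$, so $\rho_2 = O(q^{-3}\log q)$. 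For $p \in \mathcal{P}_1$ the Sylow embeds in a subgroup of type ${\rm GL}_1(q) \wr S_3$; splitting elements of order $p$ in $H_p$ by the invariant $\nu(x)$ of \eqref{e:nu}, at most $3(p-1)$ have $\nu(x) = 1$ and satisfy $|x^G| \geqs q^2(q^2+q+1)/d$, while those with $\nu(x) = 2$ have centraliser ${\rm GL}_1(q)^3$ and $|x^G| \geqs q^3(q+1)(q^2+q+1)/d$, so $\rho_1 = O(q^{-1}\log q)$. Finally, for $p = 2$ (with $q$ odd) every involution has $\nu(x) = 1$ and so $|x^G| \geqs q^2(q^2+q+1)/d$, while Lemma \ref{l:nt}(i) bounds $|H_2|$ by $O(q^2)$, giving $\rho_0 = O(q^{-2})$.

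Summing, $\what{\b}(G) = O(q^{-1}\log q)$, which both yields $\Sigma(G)<1$ for all sufficiently large $q$ and forces $\b(G), \gamma(G) \to 0$ as $q \to \infty$; this settles parts (ii) and (iii) of Theorem \ref{t:class_main} for ${\rm L}_3(q)$. The main obstacle is then the finite list of small $q$ for which the asymptotic estimates are not sharp enough to deliver $\Sigma(G) < 1$. In each such case I would abandon the probabilistic bound and compute $Q_p(G)$ directly in \textsc{Magma} via
\[
Q_p(G) = 1 - \frac{s|H_p|}{|G:H_p|},
\]
where $s$ is the number of $(H_p,H_p)$ double cosets in $G$ of size $|H_p|^2$, and verify $\a(G)<1$ case-by-case; this is exactly the tactic already used for ${\rm L}_4(3)$ in the proof of Lemma \ref{l:ln}.
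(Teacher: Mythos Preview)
Your approach is essentially the paper's: bound each $\rho_i$ by embedding $H_p$ in an appropriate overgroup and invoking Lemma~\ref{l:est}, then mop up small $q$ by direct computation. The treatment of $\mathcal{P}_1$, $\mathcal{P}_2$, $\mathcal{P}_3$ matches the paper closely.

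There is, however, a genuine gap in your handling of $p=2$. You assert that $|H_2| = O(q^2)$ and conclude $\rho_0 = O(q^{-2})$, but this does not follow: with the unique involution class satisfying $|x^G| = q^2(q^2+q+1) = \Theta(q^4)$, the crude bound $i_2(H_2) < |H_2| \leqs 2((q-1)_2)^2$ feeds into Lemma~\ref{l:est} to give only
\[
\rho_0 \leqs \frac{|H_2|^2}{|x^G|} = O\!\left(\frac{q^4}{q^4}\right) = O(1),
\]
not $O(q^{-2})$. This matters: for $q \equiv 1 \imod 4$ with $(q-1)_2$ large (e.g.\ $q$ a Fermat prime), the resulting bound on $\what\b(G)$ exceeds $1$, so you cannot deduce $\Sigma(G)<1$; and you cannot conclude $\b(G)\to 0$ for part~(iii) either.

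The paper repairs this with one extra step you already used for $\mathcal{P}_1$ but omitted here: when $q \equiv 1 \imod 4$, embed $H_2$ in a subgroup $L$ of type ${\rm GL}_1(q)\wr S_3$ and count involutions in $L$ rather than using $|H_2|$. This yields $i_2(H_2)\leqs 3+3(q-1)=3q$, hence $\rho_0 \leqs (3q)^2/(q^2(q^2+q+1)) = O(q^{-2})$ as you wanted. For $q\equiv 3\imod 4$ the bound $|H_2|\leqs 4(q+1)$ is already $O(q)$ and suffices directly. With this correction in place, your argument goes through and coincides with the paper's; the paper then checks $\Sigma(G)<1$ in {\sc Magma} for $3\leqs q\leqs 8$ and falls back to computing each $Q_p(G)$ only for $q=2$, where $\Sigma(G)>1$.
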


\begin{proof}
First observe that $\Sigma(G)<1$ if  
\begin{equation}\label{e:l3_prob}
\what{\b}(G) < \frac{q^3}{(q+1)(q^2+q+1)}.
\end{equation}
Fix a prime $p \in \pi'(G)$ and set $\gamma = (4,q-1)$ and $\delta = (4,q+1)$ as before. Let $x \in G$ be an element of order $p$.

First assume $p=2$, so $q$ is odd and $|x^G| = q^2(q^2+q+1) = b_1$ for every involution $x \in G$. If $q \equiv 1 \imod{4}$ then $|H_2| = 2((q-1)_2)^2$ and $H_2$ is contained in a subgroup $L<G$ of type ${\rm GL}_1(q) \wr S_3$. By counting in $L$, we see that $i_2(H_2) \leqs 3+3(q-1) = 3q= a_1$. And for 
$q \equiv 3 \imod{4}$ we have $|H_2| \leqs 4(q+1) = a_1$. It follows that $\rho_0 \leqs a_1^2b_1^{-1}$ for all odd $q$.

Now assume $p \in \mathcal{P}_m$ with $m \in \{1,2,3\}$. If $m=3$ then 
\[
|H_p| = (q^2+q+1)_p \leqs q^2+q+1 = a_2,\;\; |x^G| = q^3(q-1)(q^2-1) = b_2
\]
and thus $\rho_3 \leqs a_2^2b_2^{-1}\log a_2$. Similarly, we have $\rho_2 \leqs a_3^2b_3^{-1}\log a_3$, 
where $a_3 = (q+1)/\delta$ and $b_3 = q^3(q^3-1)$. Finally, suppose $m=1$. Here $|H_p| \leqs 3((q-1)/\gamma)^2 = a_5$ and we note that $H_p$ is contained in a subgroup of type ${\rm GL}_1(q) \wr S_3$. If $\nu(x) = 1$ then $|x^G| = q^2(q^2+q+1) = b_4$ and there are at most $3((q-1)/\gamma-1) = a_4$ of these elements in $H_p$. Otherwise, if $\nu(x) = 2$ then $|x^G| \geqs \frac{1}{3}q^3(q+1)(q^2+q+1) = b_5$ and we deduce that 
\[
\rho_1 \leqs \log((q-1)/\gamma) \cdot (a_4^2b_4^{-1} + a_5^2b_5^{-1}).
\]

The above estimates show that $\b(G) \to 0$ as $q \to \infty$, and they also imply that the inequality in \eqref{e:l3_prob} holds if $q \geqs 9$. For $3 \leqs q \leqs 8$, we can use {\sc Magma} to check that $\Sigma(G) < 1$. Finally, for $q = 2$ we compute $\Sigma(G) = 121/84$, so this case requires further attention. Here $\pi(G) =\{2,3,7\}$ and using {\sc Magma} we get 
\[
Q_2(G) = \frac{13}{21},\; Q_3(G) = \frac{1}{28}, \; Q_7(G) = \frac{1}{8},
\]
which implies that $\a(G)<1$.
\end{proof}

\begin{lem}\label{l:l2}
The conclusion to Theorem \ref{t:class_main} holds when $G = {\rm L}_2(q)$.
\end{lem}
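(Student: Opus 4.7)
Since $G = {\rm L}_2(q)$ is excluded from Theorem \ref{t:class_main}(iii), the task reduces to establishing (i) and (ii). The input data are
\[
|G| = \frac{1}{d}q(q^2-1),\;\; |N_G(H_r)| = \frac{1}{d}q(q-1),\;\; \frac{|H_r||N_G(H_r)|}{|G|} = \frac{q}{q+1},
\]
where $d = (2,q-1)$, and the condition $\Sigma(G)<1$ becomes $\what{\b}(G) < q/(q+1)$. Since every $p \in \pi'(G)$ divides $q^2-1$, we have $\mathcal{P}_m = \emptyset$ for all $m \geqs 3$, so $\what{\b}(G) = (1-\delta_{2,r})\rho_0 + \rho_1 + \rho_2$.

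The plan is to bound each of the three summands in turn. For each odd $p \in \mathcal{P}_m$ with $m \in \{1,2\}$, I will exploit that $H_p$ is cyclic inside a maximal torus $T_\e$ of order $(q-\e)/d$ (with $\e = (-1)^{m+1}$); since $C_G(x) = T_\e$ for each $x \in G$ of order $p$, this yields $|x^G| = q(q+\e)$. The $p-1$ elements of order $p$ in $H_p$ lie in a single subgroup of order $p$ and, under the inverting action of the Weyl element, collapse into $(p-1)/2$ $G$-classes, each meeting $H_p$ in exactly two elements. Hence $\what{Q}_p(G) = 2(p-1)/(q(q+\e))$. Summing, and using that the sum of the distinct prime divisors of a positive integer $n$ is at most $n$, I will derive the uniform estimate $\rho_1 + \rho_2 \leqs 4/(dq)$, which tends to $0$ as $q \to \infty$. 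This already delivers $\g(G) \to 0$, giving Theorem \ref{t:class_main}(ii); moreover, when $q$ is even it immediately yields $\a(G) \leqs 1/(q+1) + 4/q < 1$.

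For $q$ odd, $H_2$ is dihedral of order $(q-\e')_2$, where $\e' \in \{\pm 1\}$ satisfies $4 \mid q-\e'$, and the unique $G$-class of involutions has size $q(q+\e')/2$. Using $i_2(H_2) = |H_2|/2 + 1$ and $|H_2| \leqs q-\e'$, I will obtain
\[
\rho_0 = \what{Q}_2(G) \leqs \frac{(q-\e'+2)^2}{2q(q+\e')},
\]
which simplifies to $(q+1)/(2q)$ for $\e' = 1$ and to $(q+3)^2/(2q(q-1))$ for $\e' = -1$. Substituting into the master inequality $\a(G) \leqs 1/(q+1) + \rho_0 + 2/q$ and clearing denominators, a routine calculation shows $\a(G) < 1$ whenever $q$ is sufficiently large, leaving only a short list of small $q$ to verify by hand.

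The main obstacle is the Mersenne prime $q = 7$: here $\e' = -1$ and $|H_2| = q+1 = 8$, so the class-counting bound actually gives $\what{Q}_2(G) = 25/21 > 1$ and the inequality $\Sigma(G)<1$ fails outright. To dispose of this case I will appeal to the classical fact that the Sylow $2$-subgroup of ${\rm L}_2(q)$ with $q$ odd and $|H_2| \geqs 8$ is self-normalising in $G$, which replaces the bound by $Q_2(G) \leqs 1 - |H_2|^2/|G| = 13/21$; combined with the exact values $Q_r(G) \leqs 1/8$ and $\rho_1+\rho_2 = 1/14$ at $q=7$ this produces $\a(G) \leqs 137/168 < 1$. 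Any remaining small cases among $q \in \{5,11\}$ fall out of the same refinement, using that off the Mersenne locus $|H_2| \leqs (q+1)/3$ so that the class-counting bound for $\rho_0$ tightens substantially, or else by direct computation in {\sc Magma}.
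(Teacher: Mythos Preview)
Your approach mirrors the paper's: compute $\what{Q}_p(G)=2(p-1)/(q(q\pm 1))$ exactly for odd $p$, bound $\rho_0$ via $i_2(H_2)$, and treat small $q$ directly. Two minor slips deserve attention. First, the bound $\rho_1+\rho_2\leqs 4/(dq)$ is not quite correct: summing $\sum_{p\in\mathcal P_m}(p-1)$ over the odd prime divisors of $q\mp 1$ gives at most the odd part of $q\mp 1$, and for $q$ odd this yields
\[
\rho_1+\rho_2 < \frac{q-1}{q(q+1)}+\frac{q+1}{q(q-1)}=\frac{2(q^2+1)}{q(q^2-1)},
\]
which exceeds $2/q$ by a factor $(q^2+1)/(q^2-1)$; this is harmless for the asymptotics and for all but the tiniest $q$, but the stated constant needs adjusting. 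Second, your ``off the Mersenne locus'' claim $|H_2|\leqs(q+1)/3$ fails at $q=5$ (there $\e'=1$ and $|H_2|=4$), so that case does need the direct computation you already allow for. Your handling of $q=7$ via the fact that the Sylow $2$-subgroup is self-normalising is a clean alternative to the paper's appeal to ${\rm L}_2(7)\cong{\rm L}_3(2)$, and your summation $\sum_{p\mid n}p\leqs n$ is sharper than the paper's cruder estimate $|\mathcal P_m|<\log(\cdot)$; this is why the paper resorts to {\sc Magma} for all $5\leqs q\leqs 37$, whereas your bounds leave only a handful of small $q$ to check.
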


\begin{proof}
First observe that $|G| = \frac{1}{d}q(q^2-1)$ and $|N_G(H_r)| = \frac{1}{d}q(q-1)$, where $d = (2,q-1)$, so we have $\Sigma(G)<1$ if
\begin{equation}\label{e:l2_prob}
\what{\b}(G)  < \frac{q}{q+1}.
\end{equation}

Fix a prime $p \in \pi'(G)$. For $p=2$ we have $H_2 = D_{(q-\e)_2}$, where $q \equiv \e \imod{4}$ and $\e = \pm 1$, so $i_2(H_2) \leqs \frac{1}{2}(q+3) = a_1$ and we note that $|x^G| \geqs \frac{1}{2}q(q-1) = b_1$ for all involutions $x \in G$. It follows that
\[
\rho_0 \leqs a_1^2b_1^{-1}  = \frac{(q+3)^2}{2q(q-1)}.
\]

Notice that this estimate does not imply that $Q_2(G) \to 0$ as $q$ tends to infinity, which explains why $G = {\rm L}_2(q)$ is excluded in part (iii) of Theorem \ref{t:class_main}. For example, if $q = 2^k-1$ is a Mersenne prime, then $H_2 = D_{q+1}$ and by appealing to \cite[Lemma 7.9]{BH20} we compute
\[
Q_2(G) = 1 - \frac{s|H_2|}{|G:H_2|} = \frac{1}{2} + \frac{q+3}{2q(q-1)},
\]
where $s = (q-3)/4$ is the number of regular orbits of $H_2$ on $G/H_2$. In particular, we observe that $Q_2(G) > \frac{1}{2}$ for every Mersenne prime $q$ (and of course, we expect that there are infinitely many such primes).

Now assume $p \geqs 3$ and let $x \in G$ be an element of order $p$. Set $\gamma = (4,q-1)$ and $\delta = (4,q+1)$. First suppose $p$ divides $q-1$. Now $G$ has $(p-1)/2$ distinct conjugacy classes of such elements, with $|x^G| = q(q+1)$ and $|x^G \cap H_p| = 2$ for all $x \in G$ of order $p$, whence 
\[
\what{Q}_p(G) = \frac{1}{2}(p-1) \cdot \frac{4}{q(q+1)} = \frac{2(p-1)}{q(q+1)} \leqs \frac{2(q-2)}{q(q+1)}.
\]
Similarly, if $p$ divides $q+1$, then $|x^G| = q(q-1)$, $|x^G \cap H_p| = 2$ and we get
\[
\what{Q}_p(G) = \frac{1}{2}(p-1) \cdot \frac{4}{q(q-1)} \leqs \frac{2}{q-1}.
\]
It follows that
\[
\gamma(G) \leqs \log((q-1)/\gamma) \cdot \frac{2(q-2)}{q(q+1)} + \log((q+1)/\delta) \cdot \frac{2}{q-1}
\]
and thus $\gamma(G) \to 0$ as $q \to \infty$. In addition, we get
\[
\what{\b}(G) \leqs (1-\delta_{2,r}) \frac{(q+3)^2}{2q(q-1)} + \log((q-1)/\gamma) \cdot \frac{2(q-2)}{q(q+1)} + \log((q+1)/\delta) \cdot \frac{2}{q-1},
\]
which implies that the inequality in \eqref{e:l2_prob} holds for all $q >37$. The remaining groups with $5 \leqs q \leqs 37$ can be checked using {\sc Magma}. Indeed, for $q \ne 7$ we compute $\Sigma(G)<1$, while for $q = 7$ we have $G \cong {\rm L}_3(2)$ and so this case has already been handled in Lemma \ref{l:l3}.
\end{proof}

\subsection{Unitary groups}\label{ss:unit}

We now turn to the proof of Theorem \ref{t:class_main} for the unitary groups. So throughout this section we assume $G = {\rm U}_n(q)$ with $n \geqs 3$ and $(n,q) \ne (3,2)$. Our main result is the following (note that the $3$-dimensional groups are handled separately in Lemma \ref{l:u3}).

\begin{lem}\label{l:un}
The conclusion to Theorem \ref{t:class_main} holds when $G = {\rm U}_n(q)$ and  $n \geqs 4$.
\end{lem}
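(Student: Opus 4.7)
The plan is to follow the same four-case template as in Lemma \ref{l:ln} for linear groups, with primes dividing $q+1$ now playing the structural role that primes dividing $q-1$ played there. First I would record that $|G| = \frac{1}{d}q^{n(n-1)/2}\prod_{i=2}^n(q^i-(-1)^i)$ with $d = (n, q+1)$, and compute $|N_G(H_r)|$ in terms of $|H_r|$ and the order of the maximally split torus $T_B$ in the Borel (whose factors come from $\mathbb{F}_{q^2}^{\times}$ rather than $\mathbb{F}_q^{\times}$). These formulas yield a sufficient condition for $\Sigma(G)<1$ of the form $\what{\b}(G) \leqs g(n,q)$, where $g(n,q) \to 1$ as either $n$ or $q$ grows.

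Fix a prime $p \in \pi'(G)$ and let $x \in G$ have order $p$. I would partition $p$ into the four categories of \eqref{e:alpha}. For $p=2$ (so $q$ odd) I would split on the residue of $q$ modulo $4$, apply Lemmas \ref{l:nt} and \ref{l:factorial} to bound $|H_2|$, and embed $H_2$ in a subgroup of type ${\rm GU}_1(q) \wr S_n$ (or a hybrid involving ${\rm GU}_2(q)$ blocks in the case $q \equiv 1 \imod 4$) in order to count involutions with $\nu(x)=1$ separately from those with $\nu(x) \geqs 2$, using the class-size bounds from \cite[Section 3.2]{BGiu}. For $p \in \mathcal{P}_2$, i.e.\ $p \mid (q+1)/\delta$ with $\delta = (4,q+1)$, this is the analogue of the $\mathcal{P}_1$ case for linear groups: $H_p$ embeds in a subgroup of type ${\rm GU}_1(q) \wr S_n$, giving $|H_p| \leqs c((q+1)/\delta)^{n-1}$ for a small constant $c$, and one counts $\nu(x)=1$ elements among these by the same wreath-product argument used in the linear case. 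For $p \in \mathcal{P}_1$ (with $p$ odd) the primes divide $q-1$ and therefore appear in the $(q^2-1)$ factors of the torus, so $|H_p|$ is bounded by a small constant times $(q-1)^{\lfloor n/2\rfloor}$; here $\nu(x) \geqs 2$ is automatic and the contribution to $\rho_1$ is small. Finally, for $p \in \mathcal{P}_m$ with $m \geqs 3$ both $|H_p|$ is small and $|x^G|$ is large (via a bound in terms of the index of ${\rm GU}_{n-m'}(q) \times T$ for an appropriate $m'$ depending on the parity of $m$), and Lemma \ref{l:primes} controls the total number of primes summed over all $m \geqs 3$.

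Summing these four contributions yields $\what{\b}(G) \leqs \sum_{i=0}^3 f_i(n,q)$ with each $f_i(n,q) \to 0$ as $n$ or $q \to \infty$, which delivers the asymptotic statements in Theorem \ref{t:class_main}(ii),(iii). Checking $\sum_i f_i(n,q) < g(n,q)$ then handles all $(n,q)$ outside a short list of small pairs, typically $n \in \{4,5,6\}$ with $q$ small, which I would dispatch by direct computation in {\sc Magma} exactly in the style of the closing arguments of Lemma \ref{l:ln} (either computing $\Sigma(G)$ explicitly, or, if $\Sigma(G) \geqs 1$, computing $Q_p(G) = 1 - s|H_p|/|G:H_p|$ for each $p \in \pi(G)$ to bound $\a(G)$).

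The main obstacle is the genuinely exceptional group $G = {\rm U}_4(2) \cong {\rm PSp}_4(3)$, where $\a(G) > 1$ and no probabilistic bound of this style can possibly succeed. For this single case one must step outside the bound $\a(G)<1$ and appeal to the direct {\sc Magma} computation described in Remark \ref{r:u42}, which verifies Conjecture \ref{c:ls} (and Theorem \ref{t:main2}) for ${\rm U}_4(2)$ by random search over triples of Sylow subgroups and pairs of nilpotent subgroups respectively. A secondary, purely bookkeeping obstacle is the care required to track how the ``natural diagonal'' subgroup for ${\rm U}_n(q)$ is ${\rm GU}_1(q) \wr S_n$, whose orders involve $q+1$ rather than $q-1$, so that the roles of $\mathcal{P}_1$ and $\mathcal{P}_2$ are effectively swapped when translating estimates from Lemma \ref{l:ln}.
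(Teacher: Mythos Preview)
Your plan is essentially the paper's approach: the same four-way split into $\rho_0,\rho_1,\rho_2,\rho_3$, the same embeddings of $H_p$ in subgroups of type ${\rm GU}_1(q)\wr S_n$ (or with ${\rm GU}_2(q)$ blocks when $q\equiv 1\imod 4$), and the same role-swap of $\mathcal{P}_1$ and $\mathcal{P}_2$ relative to the linear case. Two small corrections are worth noting. First, ${\rm U}_4(2)$ is not an obstacle for \emph{this} lemma: Theorem~\ref{t:class_main}(i) already excludes it, so the paper simply assumes $q\geqs 3$ when $n=4$ and never invokes Remark~\ref{r:u42} here. Second, the residual work is heavier than your sketch suggests: the $\rho_3$ bound needs a three-way split on the residue of $m$ modulo $4$; the $p=2$, $q\equiv 3\imod 4$ case needs a three-way split on $\nu(x)\in\{1,2,\geqs 3\}$ (not just $\nu(x)=1$ versus $\nu(x)\geqs 2$); and for $q=2$ the generic bounds fail for all $9\leqs n\leqs 13$, requiring a refined count of $\nu(x)=3$ elements in $H_3$ rather than a {\sc Magma} check.
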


\begin{proof}
Set $d = (n,q+1)$ and first observe that 
\[
|N_G(H_r):H_r| = \frac{1}{d}(q^2-1)^{\lfloor (n-1)/2 \rfloor}(q-1)^e,
\]
where $e = 1$ if $n$ is even, otherwise $e = 0$. It follows that $\Sigma(G)<1$ if the bound in \eqref{e:ln_prob} is satisfied.

\vs

\noindent \emph{Case 1. $n \geqs 5$}

\vs

Fix a prime $p \in \pi'(G)$ and let $x \in G$ be an element of order $p$. 
First assume $p=2$, so $q$ is odd and we have 
\[
|x^G| = \frac{|{\rm GU}_n(q)|}{|{\rm GU}_{n-1}(q)||{\rm GU}_1(q)|} = \frac{q^{n-1}(q^n-(-1)^n)}{q+1} = b_1
\]
if $\nu(x) = 1$ (see \cite[Table B.1]{BGiu}). Similarly, $|x^G|>\frac{1}{2}(q+1)^{-1}q^{4n-7} = b_2$ if $\nu(x) = 2$ and we have $|x^G| > \frac{1}{4}(q+1)^{-1}q^{6n-17} = b_3$ if $\nu(x) \geqs 3$. There are now four cases to consider, according to the parity of $n$ and the congruence class of $q$ modulo $4$.

Suppose $n=2\ell+1$ is odd and $q \equiv 1 \imod{4}$. By appealing to Lemma \ref{l:nt}(i) we compute
\[
|H_2| = ((q+1)_2(q^2-1)_2)^{\ell}(\ell!)_2 \leqs 2^{3\ell}(q-1)^{\ell} = a_2
\]
and it follows that $H_2 <L<G$, where $L$ is of type $({\rm GU}_2(q) \wr S_{\ell}) \perp {\rm GU}_1(q)$. Here the notation indicates that $L$ is the stabiliser in $G$ of an orthogonal decomposition
\[
V = U \perp W = U_1 \perp \cdots \perp U_{\ell} \perp W
\]
of the natural module for $V$, where each $U_i$ is a non-degenerate $2$-space and $W$ is a non-degenerate $1$-space. By counting in $L$, we deduce that there are at most
\[
a_1 = 1+\binom{\ell}{1}\frac{|{\rm GU}_2(q)|}{|{\rm GU}_1(q)|^2} = 1+\ell q(q-1)
\]
involutions $x \in H_2$ with $\nu(x) = 1$, so Lemma \ref{l:est} yields $\rho_0 < a_1^2b_1^{-1} + a_2^2b_2^{-1}$.

Next assume $n=2\ell+1$ and $q \equiv 3 \imod{4}$. Here  
\[
|H_2| = ((q+1)_2)^{n-1}(n!)_2 \leqs 2^{n-1}(q+1)^{n-1} = a_3
\]
and we see that $H_2 < L<G$ with $L$ of type ${\rm GU}_1(q) \wr S_n$. Then by working in $L$, we calculate that the number of involutions $x \in H_2$ with $\nu(x) = 1$ is at most 
\begin{equation}\label{e:psua1}
a_1 = \binom{n}{1}+\binom{n}{2}|{\rm GU}_1(q)| = n+\frac{1}{2}n(n-1)(q+1).
\end{equation}
Similarly, there are at most
\begin{equation}\label{e:psua2}
\begin{aligned}
a_2 & = \binom{n}{2} + \binom{n}{2}\binom{n-2}{1}|{\rm GU}_1(q)| + \frac{n!}{(n-4)!2!2^2}|{\rm GU}_1(q)|^2 \\
& = \frac{1}{2}n(n-1)\left(1+(n-2)(q+1)+\frac{1}{4}(n-2)(n-3)(q+1)^2\right)
\end{aligned}
\end{equation}
involutions in $H_2$ with $\nu(x) = 2$. This implies that 
\begin{equation}\label{e:rho0}
\rho_0 < a_1^2b_1^{-1} + a_2^2b_2^{-1} + a_3^2b_3^{-1}.
\end{equation}

Now assume $n = 2\ell$ is even. If $q \equiv 1 \imod{4}$ then $\nu(x) \geqs 2$ (see \cite[Table B.1]{BGiu}) and 
\[
|H_2| = \frac{1}{4}((q+1)_2(q^2-1)_2)^{\ell}(\ell!)_2 \leqs 2^{3\ell-2}(q-1)^{\ell} = a_2,
\]
whence Lemma \ref{l:est} implies that $\rho_0 < a_2^2b_2^{-1}$. Finally, suppose $q \equiv 3 \imod{4}$. Here
\[
|H_2| = \frac{1}{(d)_2}((q+1)_2)^{n-1}(n!)_2 \leqs 2^{n-1}(q+1)^{n-1} = a_3
\]
and thus $H_2 < L<G$ with $L$ of type ${\rm GU}_1(q) \wr S_n$. Then as above, we calculate that $H_2$ contains at most $a_1$ (respectively, $a_2$) involutions with $\nu(x)=1$ (respectively, $\nu(x) = 2$), where $a_1$ and $a_2$ are defined in \eqref{e:psua1} and \eqref{e:psua2}, and we conclude that \eqref{e:rho0} holds.

\renewcommand{\arraystretch}{1.4}

Now assume $p \geqs 3$ is odd, so $p \in \mathcal{P}_{m}$ with $m \leqs 2n$ . Note that either $m \leqs \lfloor n/2 \rfloor$ is odd, or $m \leqs n$ is even, or $m = 2i$ with $n/2< i \leqs n$ odd. In particular, there are precisely $n$ possibilities for $m$. 

To begin with, let us assume $m \geqs 3$ and note that $p \geqs 5$. By considering the possibilities for $|C_G(x)|$ (see \cite[Section 3.3.1]{BGiu}), we first observe that  
\[
|x^G| \geqs \left\{ \begin{array}{ll}
\frac{|{\rm GU}_n(q)|}{|{\rm GU}_{n-3}(q)||{\rm GU}_1(q^3)|} & \mbox{if $n \ne 6$} \\
\frac{|{\rm GU}_6(q)|}{|{\rm GU}_{2}(q^3)|} & \mbox{if $n=6$}
\end{array}\right.
\]
and thus $|x^G| > \frac{1}{2}(q+1)^{-1}q^{6n-11} = b_4$.

\renewcommand{\arraystretch}{1}

We claim that 
\[
|H_p| \leqs (q^3+1)^{\frac{11}{24}n+\frac{1}{3}} = a_4.
\]
If $m \equiv 0 \imod{4}$, then by applying Lemmas \ref{l:nt}(ii) and \ref{l:factorial}, we compute
\[
|H_p| = \prod_{i=1}^{k}(q^{mi}-1)_p = ((q^{m/2}+1)_p)^{k}(k!)_p < (q^{m/2}+1)^{\frac{5n}{4m}} \leqs (q^2+1)^{\frac{5}{16}n} < a_4,
\]
where $k = \lfloor n/m\rfloor$. Similarly, if $m$ is odd and $k = \lfloor n/2m \rfloor$ then
\[
|H_p| = \prod_{i=1}^{k}(q^{2mi}-1)_p = ((q^{m}-1)_p)^{k}(k!)_p < q^{\frac{5}{8}n} < a_4.
\]
Finally, suppose $m \equiv 2 \imod{4}$ and set $k = \lfloor (n-1+e)/m \rfloor$ and $\ell = \lfloor (n-e)/m+1/2 \rfloor$, where $e = 1$ if $n$ is even, otherwise $e = 0$. Then
\[
|H_p| = \prod_{i=1}^{k}(q^{mi}-1)_p \cdot \prod_{i=1}^{\ell}(q^{m(2i-1)}-1)_p = ((q^{m/2}+1)_p)^{k+\ell}(k!)_p\prod_{i=1}^{\ell}(2i-1)_p
\]
and we deduce that
\[
|H_p| \leqs (q^{m/2}+1)^{\frac{5}{4}k+\ell+\frac{1}{4}(2\ell-1)} \leqs (q^3+1)^{\frac{11}{24}n+\frac{1}{3}} = a_4.
\]
This justifies the claim. In addition, since there are $n-2$ possibilities for $m$ with $m \geqs 3$ and since $|\mathcal{P}_m| < \frac{1}{2}n\log q$ for all $m$ (see Lemma \ref{l:primes}), it follows that
\[
\rho_3 < \frac{1}{2}n(n-2)\log q \cdot a_4^2b_4^{-1}.
\]

Next suppose $m=2$, so $p$ divides $(q+1)/\delta$ and  
\[
|H_p| = \frac{1}{(d)_p}((q+1)_p)^{n-1}(n!)_p \leqs ((q+1)/\delta)^{\frac{3}{2}n-1} = a_7,
\]
where $\delta = (4,q+1)$. In particular, we have $H_p < L < G$ with $L$ of type ${\rm GU}_1(q) \wr S_n$. If $\nu(x) = 1$ then 
\[
|x^G| = \frac{|{\rm GU}_n(q)|}{|{\rm GU}_{n-1}(q)||{\rm GU}_1(q)|} = \frac{q^{n-1}(q^n-(-1)^n)}{q+1} = b_5
\] 
and by counting in $L$ we deduce that there are at most 
\[
\binom{n}{1}(p-1) \leqs n((q+1)/\delta-1) = a_5
\]
of these elements in $H_p$. Similarly, if $\nu(x) = 2$ then $|x^G|>\frac{1}{2}(q+1)^{-1}q^{4n-7} = b_6$ and $H_p$ contains no more than
\[
\binom{n}{2}(p-1)^2 + \delta_{3,p}\frac{n!}{(n-3)!3} |{\rm GU}_1(q)|^2
\]
such elements, which in turn is at most
\[
a_6 = \frac{1}{2}n(n-1)\left( ((q+1)/\delta-1)^2 + \frac{2}{3}(n-2)(q+1)^2\right).
\]
Since $|x^G|>\frac{1}{2}(q+1)^{-1}q^{6n-17} = b_7$ if $\nu(x) \geqs 3$, it follows that 
\[
\rho_2 < \log((q+1)/\delta) \cdot (a_5^2b_5^{-1} + a_6^2b_6^{-1} + a_7^2b_7^{-1}).
\]

Finally, suppose $m=1$ and set $\gamma = (4,q-1)$ and $k = \lfloor n/2 \rfloor$. Then $p$ divides $(q-1)/\gamma$ and  
\[
|H_p| = ((q-1)_p)^{k}(k!)_p \leqs ((q-1)/\gamma)^{\frac{3}{4}n} = a_8.
\]
In addition, we have 
\[
|x^G| \geqs \frac{|{\rm GU}_n(q)|}{|{\rm GU}_{n-2}(q)||{\rm GL}_1(q^2)|} > \frac{1}{2}\left(\frac{q}{q+1}\right)q^{4n-6} = b_8
\]
for all $x \in G$ of order $p$, which implies that 
\[
\rho_1 < \log((q-1)/\gamma) \cdot a_8^2b_8^{-1}.
\]

For each $i$, it is clear that the above estimates $\rho_i < f_i(n,q)$ have the property that $f_i(n,q) \to 0$ as either $n$ or $q$ tends to infinity. In particular, $\b(G) \to 0$ as $|G| \to \infty$. 

So to complete the proof of the lemma for $n \geqs 5$, it just remains to show that $\a(G)<1$. To do this, it will be convenient to partition our analysis of the above bounds according to the parity of $q$.

First assume $q$ is odd. Here it is straightforward to check that the bound
\[
\what{\b}(G) < \sum_{i=0}^3 f_i(n,q)
\]
shows that the inequality in \eqref{e:ln_prob} is satisfied unless $(n,q)$ is one of the following:
\[
(5,3), \; (5,5),\; (5,7), \; (6,3), \; (7,3), \; (8,3).
\]
For $(n,q) = (5,3)$ we can use {\sc Magma} to check that $\Sigma(G)<1$. And in  each of the remaining cases, we can adjust the above estimates to give the desired result. For example, if $(n,q) = (5,5)$ then we compute $i_2(H_2) = 51$ and $\pi'(G) = \{2,3,7,13,521\}$, which means that we can set $a_2 = 51$ in our previous upper bound on $\rho_0$ and we can replace the factor $\frac{1}{2}n(n-2)\log q$ in the expression for $a_4$ by $3$. It is then straightforward to  check that the modified bound is sufficient.

Now suppose $q$ is even. The case $q=2$ will require special attention, so for now let us assume $q \geqs 4$. If $n \geqs 7$ then the bound
\[
\what{\b}(G) < \sum_{i=1}^3 f_i(n,q)
\]
is sufficient unless $(n,q) = (7,4)$. In the latter case, we have $\mathcal{P}_2 = \{5\}$ and $|H_5| = 5^7$, so we can set $a_7 = 5^7$ in the expression for $f_2(n,q)$ and then check that the corresponding bound on $\what{\b}(G)$ is sufficient.

Next assume $n \in \{5,6\}$ and $q \geqs 4$. If $n = 6$ then $m \in \{1,2,3,4,6,10\}$ and for $m \geqs 3$ we observe that $|H_p| \leqs q^4-q^3+q^2-q+1 = a_4$ and $p$ divides
\[
(q^2+1)(q^2+q+1)(q^2-q+1)(q^4-q^3+q^2-q+1) < q^{10},
\]
which implies that $\rho_3 < 10\log q \cdot a_4^2b_4^{-1}$, where $b_4$ is defined as above. Similarly, if $m=2$ then $|H_p| \leqs 5(q+1)^5$, so we can set $a_7 = 5(q+1)^5$ in the definition of $f_2(n,q)$. With these modifications, it is easy to check that the corresponding upper bound on $\what{\b}(G)$ is sufficient for all $q \geqs 4$.

The case $n=5$ is very similar. Here $\rho_3 < 8\log q \cdot a_4^2b_4^{-1}$, where $a_4 = q^4-q^3+q^2-q+1$ and 
$b_4$ is defined as above. In addition, we can set $a_7 = c(q+1)^4$ in the definition of $f_2(n,q)$, where $c=3$ if $q \equiv -1 \imod{3}$, otherwise $c=1$. The reader can then check that the improved upper bound on $\what{\b}(G)$ is sufficient for all $q \geqs 8$. Finally, if $(n,q) = (5,4)$ then $\pi'(G) = \{3,5,13,17,41\}$ and thus
\[
\what{\b}(G) < 3a_4^2b_4^{-1} + \sum_{i=5}^8a_i^{2}b_i^{-1} < \left(\frac{3}{4}\right)^4
\]
as required.

Finally, let us assume $q = 2$. If $n \in \{5,6,7,8\}$ then we can use {\sc Magma} to check that $\Sigma(G)<1$. For example, for $n=5$ we get
\[
\Sigma(G) = \frac{6566723}{6842880}.
\]
So for the remainder, we may assume $n \geqs 9$. To resolve this case, we need to improve the upper bound $\rho_2 \leqs f_2(n,2)$ presented above, noting that $\mathcal{P}_2 = \{3\}$ and $H_3$ is contained in a subgroup $L$ of type ${\rm GU}_1(2) \wr S_n$. 

With this goal in mind, let $x \in G$ be an element of order $3$ and observe that 
\[
|x^G| \geqs \frac{|{\rm GU}_n(2)|}{|{\rm GU}_{n-4}(2)||{\rm GU}_4(2)|}>\frac{1}{2}\left(\frac{q}{q+1}\right)q^{8n-32} = b_9
\]
if $\nu(x) \geqs 4$. Now, if $\nu(x) = 3$ then $|x^G|>\frac{1}{2}(q+1)^{-1}q^{6n-17} = b_7$ and by counting in $L$ we deduce that there are no more than 
\[
a_7  = 2^3\binom{n}{3} + \frac{n!}{(n-3)!3}|{\rm GU}_1(2)|^2 \cdot 2\binom{n-3}{1}  = \frac{2}{3}n(n-1)(n-2)(9n-25) 
\]
such elements in $H_3$. Since $|H_3| < 3^{3n/2-1} = a_9$, we deduce that
\[
\rho_2 < a_5^2b_5^{-1}+a_6^2b_6^{-1}+a_7^2b_7^{-1}+a_9^2b_9^{-1},
\]
where the terms $a_5,b_5,a_6$ and $b_6$ are defined as in the main argument above. By combining this revised estimate for $\rho_2$ with our previous bound on $\rho_3$, the reader can check that $\what{\b}(G)<1$ for all $n \geqs 14$. 

Finally, suppose $9 \leqs n \leqs 13$. Recall that
\[
\frac{|H_2||N_G(H_2)|}{|G|} = \frac{2^{n(n-1)/2}3^{\lfloor (n-1)/2\rfloor}}{\prod_{i=2}^n(2^i-(-1)^i)}=:f(n)
\]
and it suffices to show that $\what{\b}(G) < f(n)$. One checks that our previous upper bound on $\what{\b}(G)$ is sufficient if $n \in \{11,12,13\}$, so we may assume $n =9$ or $10$. Here we can improve the upper bound on $\rho_2$ by setting 
\[
a_9 = i_3(H_3) = \left\{ \begin{array}{ll}
104246  & \mbox{if $n=10$} \\
28430 & \mbox{if $n=9$,}
\end{array}\right.
\]
where we have computed the exact values using {\sc Magma}. One can now check that the revised estimate for $\what{\b}(G)$ is good enough.

\vs

\noindent \emph{Case 2. $n=4$}

\vs

To complete the proof of the lemma we may assume $n=4$. In view of part (i) of Theorem \ref{t:class_main}, we may assume $q \geqs 3$. And with the aid of {\sc Magma} we check that $\Sigma(G) < 1$ when $q \in \{3,4,5\}$, so we can assume $q \geqs 7$. Note that it suffices to show that
\begin{equation}\label{e:u4_prob}
\what{\b}(G) < \frac{|H_r||N_G(H_r)|}{|G|} = \frac{q^6}{(q^3+1)(q^2+1)(q+1)}.
\end{equation}
Fix a prime $p \in \pi'(G)$ and let $x \in G$ be an element of order $p$.

First assume $p = 2$. If $q \equiv 1 \imod{4}$ then $|H_2| \leqs 8(q-1)^2 = a_2$ and $\nu(x) = 2$, so 
\[
|x^G| \geqs \frac{|{\rm GU}_4(q)|}{2|{\rm GU}_2(q)|^2} = \frac{1}{2}q^4(q^2-q+1)(q^2+1) = b_2
\]
and $\rho_0 \leqs a_2^2b_2^{-1}$. Now assume $q \equiv 3 \imod{4}$. Here $|H_2| = 2((q+1)_2)^3 \leqs 2(q+1)^3 = a_2$ and $H_2$ is contained in a subgroup $L$ of type ${\rm GU}_1(q) \wr S_4$. If $\nu(x) = 1$ then $|x^G| = q^3(q-1)(q^2+1) = b_1$ and by working in $L$ we calculate that there are at most $a_1 = 6q+10$ such elements in $H_2$, so it follows that $\rho_0 \leqs a_1^2b_1^{-1} + a_2^2b_2^{-1}$.

Now assume $p$ is odd, so $p \in \mathcal{P}_m$ with $m \in \{1,2,4,6\}$. If $m \in \{4,6\}$ then $|H_p| \leqs q^2+1 = a_3$ and
\[
|x^G| \geqs \frac{|{\rm GU}_4(q)|}{|{\rm GU}_1(q)||{\rm GU}_1(q^3)|} = q^4(q^2-1)(q^4-1) = b_3,
\]
which means that $\rho_3 \leqs 2\log(q^2+1) \cdot a_3^2b_3^{-1}$. 

Now suppose $m=2$, so $p$ divides $(q+1)/\delta$ with $\delta = (4,q+1)$. Here
\[
|H_p| = ((q+1)_p)^3(4!)_p \leqs 3((q+1)/\delta)^3 = a_5
\]
and we note that $H_p<L<G$, where $L$ is a subgroup of type ${\rm GU}_1(q) \wr S_4$. If $\nu(x) \geqs 2$ then 
\[
|x^G| \geqs \frac{|{\rm GU}_4(q)|}{|{\rm GU}_2(q)|^2} = q^4(q^2-q+1)(q^2+1) = b_5,
\]
otherwise $|x^G| = q^3(q-1)(q^2+1) = b_4$ and by counting in $L$ we deduce that $H_p$ contains at most
$4(p-1) \leqs 4((q+1)/\delta -1) = a_4$
such elements. This implies that 
\[
\rho_2 \leqs \log((q+1)/\delta) \cdot (a_4^2b_4^{-1} + a_5^2b_5^{-1}).
\]
Finally, if $m=1$ then $|H_p| \leqs ((q-1)/\gamma)^2 = a_6$ and 
\[
|x^G| \geqs \frac{|{\rm GU}_4(q)|}{|{\rm GL}_2(q^2)|} = q^4(q+1)(q^3+1) = b_6,
\]
where $\gamma = (4,q-1)$, and we deduce that $\rho_1 \leqs \log((q-1)/\gamma) \cdot a_6^2b_6^{-1}$.

Bringing all of these estimates together, it is straightforward to check that the inequality in \eqref{e:u4_prob} holds for all $q \geqs 5$ with $q \ne 8$. And for $q=8$ we have $\mathcal{P}_2 = \{3\}$, so we can replace the factor $\log((q+1)/\delta)$ in the upper bound on $\rho_2$ by $1$, and then it is easy to check that the corresponding upper bound on $\what{\b}(G)$ is sufficient.
\end{proof}

\begin{lem}\label{l:u3}
The conclusion to Theorem \ref{t:class_main} holds when $G = {\rm U}_3(q)$.
\end{lem}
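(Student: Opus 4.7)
The approach is to imitate the proof of Lemma \ref{l:l3}, with the roles of $q - 1$ and $q + 1$ adjusted appropriately for the unitary setting. Setting $d = (3, q+1)$, I compute
\[
|N_G(H_r)| = \frac{1}{d}q^3(q^2-1), \qquad \frac{|H_r||N_G(H_r)|}{|G|} = \frac{q^3}{q^3+1},
\]
so the bound $\Sigma(G) < 1$ follows from the target inequality
\begin{equation*}
\what{\b}(G) < \frac{q^3}{q^3+1}.
\end{equation*}
Since ${\rm U}_3(2)$ is not simple and ${\rm U}_3(3) \cong G_2(2)'$ was already handled in Lemma \ref{l:ex_comp}, I may assume $q \geqs 4$; a handful of small values of $q$ can be verified by direct computation in {\sc Magma}, so the main task is to produce bounds on $\what{Q}_p(G)$ that suffice for all large $q$ and that also tend to $0$ as $q \to \infty$.

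Each $p \in \pi'(G)$ lies in $\{2\} \cup \mathcal{P}_1 \cup \mathcal{P}_2 \cup \mathcal{P}_6$, corresponding to the three factors $q-1$, $q+1$ and $q^2 - q + 1$ of the odd part of $|G|$. For $p = 2$ (with $q$ odd), there is a unique class of involutions, with $|x^G| = q^2(q^2-q+1) = b_1$, and since $q^2 - q + 1$ is odd we have $|H_2| = (q-1)_2(q+1)_2^2$. For $q \equiv 3 \imod{4}$ we have $|H_2| \leqs 2(q+1)^2$ and $H_2$ embeds in a subgroup $L < G$ of type ${\rm GU}_1(q) \wr S_3$; counting involutions in $L$ exactly as in the proof of Lemma \ref{l:l3} (with $q+1$ in place of $q-1$) bounds $i_2(H_2)$ by $O(q)$. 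For $q \equiv 1 \imod{4}$ the Sylow $2$-subgroup sits inside the normaliser of a torus of type ${\rm GU}_1(q^2) \times {\rm GU}_1(q)$ and a similar count applies. Either way, $\rho_0 \leqs a_1^2 b_1^{-1}$ is of order $q^{-2}$.

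For $p \in \mathcal{P}_6$ (so $p \geqs 7$ and $p \mid q^2 - q + 1$) an element $x$ of order $p$ acts irreducibly on the natural module, the centraliser has order dividing $(q^3+1)/d$, and $|H_p| \leqs q^2 - q + 1$, which gives $\rho_3$ of order $q^{-2}\log q$. For $p \in \mathcal{P}_2$, $H_p$ embeds in ${\rm GU}_1(q) \wr S_3$ with $|H_p| \leqs 3((q+1)_p)^2$, and I separate elements by $\nu(x)$: for $\nu(x) = 1$ I count diagonal-type elements in the wreath product to obtain a sharper bound, while for $\nu(x) = 2$ the conjugacy class has size of order $q^5$; combining these gives $\rho_2$. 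For $p \in \mathcal{P}_1$, an element of order $p$ in $G$ must be of shape ${\rm diag}(\l, \l^{-1}, 1)$ with $\l \in \mathbb{F}_q^\times$ of order $p$, which forces $\nu(x) = 2$, and $|H_p| \leqs ((q-1)_p)^2$, yielding a bound on $\rho_1$.

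Summing the four contributions $\rho_0 + \rho_1 + \rho_2 + \rho_3$ produces an explicit upper bound on $\what{\b}(G)$ of order $q^{-1}\log q$, which simultaneously gives the asymptotic conclusions in parts (ii) and (iii) of Theorem \ref{t:class_main} and verifies the target inequality for all but finitely many $q$. The main obstacle will be choosing the overgroup of $H_2$ and counting involutions carefully enough in the case $q \equiv 3 \imod{4}$, where both $|H_2|$ and the contribution from $\mathcal{P}_2$ are governed by $q+1$; as in the linear case, the wreath-product subgroup ${\rm GU}_1(q) \wr S_3$ provides the correct framework, and any residual small-$q$ cases where the asymptotic bound is not tight enough are then dispatched by direct {\sc Magma} computation, exactly as in the closing paragraphs of Lemma \ref{l:l3}.
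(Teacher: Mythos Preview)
Your proposal is correct and follows essentially the same route as the paper: the same target inequality $\what\b(G) < q^3/(q^3+1)$, the same partition of $\pi'(G)$ into $\{2\} \cup \mathcal{P}_1 \cup \mathcal{P}_2 \cup \mathcal{P}_6$, the same embedding of $H_p$ in a subgroup of type ${\rm GU}_1(q) \wr S_3$ for $p \in \mathcal{P}_2$ and for $p=2$ with $q \equiv 3 \imod 4$, and the same appeal to {\sc Magma} for small $q$. One harmless slip: for $p \in \mathcal{P}_1$ your shape ${\rm diag}(\lambda,\lambda^{-1},1)$ is right, but then $|H_p| = (q-1)_p$ rather than $((q-1)_p)^2$, since $q-1$ appears only to the first power in $|G|$; your stated bound is still a valid over-estimate, so nothing breaks.
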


\begin{proof}
This is very similar to the proof of Lemma \ref{l:l3}. Note that $q \geqs 3$ (since ${\rm U}_3(2)$ is soluble) and it suffices to show that
\begin{equation}\label{e:u3_prob}
\what{\b}(G) < \frac{q^3}{q^3+1}.
\end{equation}
Fix a prime $p \in \pi'(G)$ and let $x \in G$ be an element of order $p$.

Suppose $p=2$, so $q$ is odd and $|x^G| = q^2(q^2-q+1) = b_1$ for every involution $x \in G$. If $q \equiv 3 \imod{4}$ then $|H_2| = 2((q+1)_2)^2$ and $H_2$ is contained in a subgroup $L<G$ of type ${\rm GU}_1(q) \wr S_3$. By counting in $L$, we see that there are no more than $3q+6= a_1$ such involutions in $H_2$. On the other hand, if $q \equiv 1 \imod{4}$ then $|H_2| \leqs 4(q-1) = a_1$. It follows that $\rho_0 \leqs a_1^2b_1^{-1}$ for all odd $q$.

Now assume $p$ is odd, so $p \in \mathcal{P}_m$ with $m \in \{1,2,6\}$. Set $\gamma = (4,q-1)$ and $\delta = (4,q+1)$. If $m=6$ then 
\[
|H_p| = (q^2-q+1)_p \leqs q^2-q+1 = a_2,\;\; |x^G| = q^3(q+1)(q^2-1) = b_2
\]
and thus $\rho_3 \leqs a_2^2b_2^{-1}\log a_2$. Similarly, $\rho_1 \leqs a_3^2b_3^{-1}\log a_3$, where $a_3 = (q-1)/\gamma$ and $b_3 = q^3(q^3+1)$. Finally, suppose $m=2$ and note that 
\[
|H_p| = ((q+1)_p)^2(3!)_p \leqs 3((q+1)/\delta)^2 = a_5,
\]
which means that $H_p$ is contained in a subgroup of type ${\rm GU}_1(q) \wr S_3$. If $\nu(x) = 1$ then $|x^G| = q^2(q^2-q+1) = b_4$ and there are at most $3((q+1)/\delta-1) = a_4$ such elements in $H_p$. Otherwise, $|x^G| \geqs \frac{1}{3}q^3(q-1)(q^2-q+1) = b_5$ and we deduce that 
\[
\rho_2 \leqs \log((q+1)/\delta) \cdot (a_4^2b_4^{-1} + a_5^2b_5^{-1}).
\] 

The above bounds imply that \eqref{e:u3_prob} holds if $q \geqs 9$. And for $q \leqs 8$ we can use {\sc Magma} to show that $\Sigma(G)<1$ in every case. The result follows. 
\end{proof}

\subsection{Symplectic groups}\label{ss:symp}

In this section we prove Theorem \ref{t:class_main} for the symplectic groups. Recall that we may assume $G = {\rm PSp}_n(q)$, where $n \geqs 4$ is even and $(n,q) \ne (4,2)$ (the latter assumption is valid since ${\rm PSp}_4(2)' \cong {\rm L}_2(9)$). As before, we will work closely with the notation defined in \eqref{e:defs}, \eqref{e:pm}, \eqref{e:alpha} and \eqref{e:nu}.

Our main result is the following (the $4$-dimensional symplectic groups will be handled separately in Lemma \ref{l:sp4}).

\begin{lem}\label{l:spn}
The conclusion to Theorem \ref{t:class_main} holds when $G = {\rm PSp}_n(q)$ and $n \geqs 6$.
\end{lem}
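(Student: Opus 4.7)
The strategy follows the template established for the linear and unitary groups in Lemmas \ref{l:ln} and \ref{l:un}. Set $d = (2,q-1)$ and recall that
\[
|G| = \frac{1}{d}q^{n^2/4}\prod_{i=1}^{n/2}(q^{2i}-1), \quad |N_G(H_r):H_r| = \frac{1}{d}(q-1)^{n/2},
\]
so $\Sigma(G)<1$ will follow from the inequality
\[
\what{\b}(G) \leqs \left(1-q^{-1}\right)^{n/2}.
\]
As in the linear and unitary cases, the plan is to write $\what{\b}(G) = (1-\delta_{2,r})\rho_0 + \rho_1 + \rho_2 + \rho_3$ and to produce explicit upper bounds $\rho_i \leqs f_i(n,q)$, each of which visibly tends to $0$ as either $n$ or $q$ tends to infinity. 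Verifying the resulting composite inequality will settle parts (ii) and (iii) of Theorem \ref{t:class_main}, while part (i) will follow outside a finite list of small cases handled directly in {\sc Magma}.

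For the bound $\rho_0$ on involutions (case $q$ odd), I would split according to the congruence of $q$ modulo $4$ and the $\nu$-invariant. The involution classes in symplectic groups are parametrised by type ($t_i$ and $t_i'$ in the notation of \cite{BGiu}) and by $\nu(x) \in \{1, \ldots, n/2\}$; for each the centraliser order is read off from \cite[Table B.2]{BGiu}, yielding $|x^G| \geqs c(q+1)^{-1}q^{2n \nu(x) - 2\nu(x)^2 - \nu(x)}$ up to a constant. To bound $|x^G \cap H_2|$ I would embed $H_2$ in a suitable overgroup $L$: when $q \equiv 1 \imod 4$, the torus-like structure lets one take $L$ of type ${\rm Sp}_2(q) \wr S_{n/2}$; when $q \equiv 3 \imod 4$, one uses $L$ of type ${\rm Sp}_2(q)\wr S_{n/2}$ or an iterated wreath refinement, and counts involutions of each $\nu$-value in $L$ via a binomial sum exactly as in the unitary argument. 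Lemma \ref{l:nt}(i) combined with Lemma \ref{l:factorial} gives the ambient bound $|H_2| \leqs 2^{cn}(q+\e)^{n/2}$ for a suitable $\e \in \{\pm 1\}$, which governs the large-$\nu$ contribution via Lemma \ref{l:est}.

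For odd primes $p$ with $p \in \mathcal{P}_m$, one has $m \in \{1,2,\ldots,n\}$ with $m$ even giving rise to elements coming from $\mathrm{GU}$-type tori and $m$ odd from $\mathrm{GL}$-type tori. I would handle the three regimes $m \geqs 3$ (contribution $\rho_3$), $m=2$ (contribution $\rho_2$), and $m=1$ (contribution $\rho_1$) separately. In each regime Lemma \ref{l:nt}(ii) and Lemma \ref{l:factorial} control $|H_p|$ by a power of $q^{\lceil m/2 \rceil}+\e$ with exponent at most $\tfrac{5n}{4m}+O(1)$, while the centraliser estimates in \cite[Section 3.4]{BGiu} yield $|x^G| > c(q+1)^{-1} q^{2n\nu(x)-2\nu(x)^2-\nu(x)}$. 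For $m=1$, $H_p$ sits inside a subgroup of type $\mathrm{GL}_1(q)\wr S_{n/2}$ stabilising a hyperbolic decomposition, so as in the linear case the $\nu(x)=1$ elements are counted by a simple binomial argument, while higher $\nu$ gives rapid decay; Lemma \ref{l:primes} bounds $|\mathcal{P}_m|$ by $\log q$. Summing over $m$ introduces at most a factor $n \log q$, which is easily absorbed.

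The main obstacle, exactly as in the unitary case, will be the $q=2$ (and to a lesser extent $q=3$) subseries: here the gap between $|H_r||N_G(H_r)|/|G|$ and the trivial upper bounds on $\rho_i$ becomes tight, and the crude estimate for $\rho_2$ (primes $p \mid q+1$, so $p = 3$ when $q=2$) is typically the offender. I expect to need the same kind of refinement used in Lemma \ref{l:un}: embed $H_3$ in a subgroup of type $\mathrm{GU}_1(2)\wr S_{n/2}$ (or its symplectic analogue, coming from the extraspecial-type structure on isotropic decompositions) and separately count elements of $\nu$-value $1$, $2$, $3$, together with the global count by $|H_3|$, so that the dominant bound becomes quartic rather than quadratic in $n$. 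After this, I anticipate a short finite list of genuine exceptions with $n \in \{6,8,10,12\}$ and $q \in \{2,3\}$; for each such case I would either compute $i_2(H_2)$ and the needed $\nu$-stratified counts $|x^G \cap H_p|$ in {\sc Magma} to sharpen the bounds, or compute $\Sigma(G)$ directly by forming the Sylow subgroups and counting double cosets, thereby verifying $\a(G)<1$ explicitly.
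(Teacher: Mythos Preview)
Your plan is essentially the paper's approach: the same target inequality $\what{\b}(G)\leqs(1-q^{-1})^{n/2}$, the same $\rho_0,\rho_1,\rho_2,\rho_3$ decomposition, the same embeddings of $H_p$ in wreath-type subgroups together with Lemmas~\ref{l:nt}, \ref{l:factorial}, \ref{l:primes} and \ref{l:est}, and the same anticipated list of small $(n,q)$ to be finished by direct computation.

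One correction is worth making before you execute the details. In ${\rm PSp}_n(q)$ with $n\geqs 6$ there are \emph{no} semisimple elements of prime order with $\nu(x)=1$: if $x$ has odd prime order then its non-trivial eigenvalues occur in inverse pairs $\{\lambda,\lambda^{-1}\}$ with $\lambda\ne\lambda^{-1}$, so $\nu(x)\geqs 2$; and for an involution, $\nu(x)$ is always even or equal to $\ell=n/2$ (cf.\ \cite[Table~B.2]{BGiu}). Thus for $m\in\{1,2\}$ you should be isolating the $\nu(x)=2$ contribution (counting at most $\tfrac{n}{2}(q\mp 1)$ such elements inside $L$ of type ${\rm GL}_1(q)\wr S_{n/2}$ or ${\rm GU}_1(q)\wr S_{n/2}$), not a $\nu(x)=1$ contribution, and similarly for $\rho_0$ the split is $\nu(x)=2$ versus $\nu(x)\geqs 4$. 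This also means that the split according to $q\bmod 4$ for involutions is unnecessary here: a single embedding $H_2<L$ with $L$ of type ${\rm Sp}_2(q)\wr S_{n/2}$ works uniformly. With this bookkeeping fixed, your outline goes through exactly as in the paper; in practice the paper separates out $n\in\{6,8\}$ from $n\geqs 10$ only because the crude centraliser bounds for $\nu(x)\geqs 4$ need mild adjustment at the bottom of the range.
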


\begin{proof}
Write $n = 2\ell$ and set $d = (2,q-1)$. We have 
\[
|G| = \frac{1}{d}q^{\ell^2}\prod_{i=1}^\ell(q^{2i}-1), \;\;  |N_G(H_r)| = \frac{1}{d}q^{\ell^2}(q-1)^{\ell}
\]
and thus $\Sigma(G)<1$ if 
\begin{equation}\label{e:spn_prob}
\what{\b}(G) \leqs \left(1-q^{-1}\right)^{\ell}.
\end{equation}
We divide the proof into a number of separate cases.

\vs

\noindent \emph{Case 1. $n \geqs 10$}

\vs

To begin with, we assume $n \geqs 10$. Fix a prime $p \in \pi'(G)$ and let $x \in G$ be an element of order $p$. For now, let us assume $p=2$, so $q$ is odd and either $\nu(x)$ is even or $\nu(x) = \ell$. By appealing to Lemma \ref{l:nt}(i), we compute 
\[
|H_2| = \frac{1}{2}((q^2-1)_2)^\ell(\ell!)_2 \leqs 2^{n-1}(q+1)^\ell = a_2
\]
and thus $H_2$ is contained in a subgroup $L$ of type ${\rm Sp}_2(q) \wr S_\ell$. If $\nu(x) = 2$ then 
\[
|x^G| = \frac{|{\rm Sp}_n(q)|}{|{\rm Sp}_{n-2}(q)||{\rm Sp}_2(q)|} = \frac{q^{n-2}(q^n-1)}{q^2-1} = b_1
\]
and by counting in $L$ we deduce that there are no more than 
\[
a_1 = \binom{\ell}{1} + \binom{\ell}{2}|{\rm Sp}_2(q)| = \ell+\frac{1}{2}\ell(\ell-1)q(q^2-1)
\]
of these involutions in $H_2$. And if $\nu(x) \geqs 4$ then 
\[
|x^G| \geqs \frac{|{\rm Sp}_n(q)|}{|{\rm Sp}_{n-4}(q)||{\rm Sp}_4(q)|}>\frac{1}{2}q^{4n-16} = b_2
\]
and thus Lemma \ref{l:est} implies that $\rho_0 < a_1^2b_1^{-1} + a_2^2b_2^{-1}$.

Now assume $p$ is odd, so $p \in \mathcal{P}_{m}$ with $m \leqs n/e$, where $e=2$ if $m$ is odd, otherwise $e=1$. Set $k = \lfloor n/me\rfloor$. First assume 
$m=4$, in which case $p$ divides $q^2+1$ and  
\[
|x^G| \geqs \frac{|{\rm Sp}_n(q)|}{|{\rm Sp}_{n-4}(q)||{\rm GU}_1(q^2)|}>\frac{1}{2}\left(\frac{q}{q+1}\right)q^{4n-8} = b_3.
\]
In addition, by appealing to Lemma \ref{l:factorial} (noting that $p \geqs 5$), we deduce that 
\[
|H_p| = ((q^4-1)_p)^k(k!)_p \leqs (q^{2}+1)^{k}\cdot p^{\frac{k}{4}} \leqs (q^2+1)^{\frac{5}{16}n} = a_3.
\]
Now suppose $m \geqs 3$ and $m \ne 4$. Here $|H_p| = ((q^{m}-1)_p)^k(k!)_p$
and
\[
|x^G| \geqs \frac{|{\rm Sp}_n(q)|}{|{\rm Sp}_{n-6}(q)||{\rm GU}_1(q^3)|}>\frac{1}{2}\left(\frac{q}{q+1}\right)q^{6n-18} = b_4.
\]
If $m$ is even, then $p$ divides $q^{m/2}+1$ and we get
\[
|H_p| \leqs (q^{m/2}+1)^{k}\cdot p^{\frac{k}{4}} \leqs (q^{m/2}+1)^{\frac{5n}{4m}} \leqs (q^3+1)^{\frac{5}{24}n} = a_4.
\]
Similarly, if $m$ is odd then $|H_p| \leqs (q^{m}-1)^{5n/8m} < a_4$. 

Since there are at most $n-3$ possibilities for $m$ in the range $3 \leqs m \leqs n$ with $m \ne 4$, and Lemma \ref{l:primes} implies that $|\mathcal{P}_m| < \frac{1}{2}n\log q$ for all $m$, we deduce that  
\[
\rho_3 < \log(q^2+1) \cdot a_3^2b_3^{-1}+\frac{1}{2}n(n-3)\log q \cdot a_4^2b_4^{-1}.
\]

Next assume $m = 2$, so $p$ divides $q+1$ and $|H_p| = ((q+1)_p)^{\ell}(\ell!)_p$, which means that $H_p$ is contained in a subgroup $L$ of type ${\rm GU}_1(q) \wr S_{\ell}$. If $\nu(x) = 2$ then 
\[
|x^G| = \frac{|{\rm Sp}_n(q)|}{|{\rm Sp}_{n-2}(q)||{\rm GU}_1(q)|} = \frac{q^{n-1}(q^n-1)}{q+1} = b_5
\]
and by counting in $L$ we see that there are no more than 
\[
\binom{\ell}{1}(p-1) \leqs \frac{1}{2}nq = a_5
\]
such elements in $H_p$. Otherwise, we have $|x^G| >\frac{1}{2}(q+1)^{-1}q^{4n-9} = b_6$ and we observe that $|H_p| < (q+1)^{3n/4} = a_6$. This gives 
$\rho_2 < \log(q+1) \cdot (a_5^2b_5^{-1}+a_6^2b_6^{-1})$.

The case $m = 1$ is very similar. Here $|H_p| = ((q-1)_p)^{\ell}(\ell!)_p<q^{3n/4} = a_8$ and thus $H_p$ is contained in a subgroup of type ${\rm GL}_1(q) \wr S_{\ell}$. If $\nu(x) = 2$, then 
\[
|x^G| = \frac{|{\rm Sp}_n(q)|}{|{\rm Sp}_{n-2}(q)||{\rm GL}_1(q)|} = \frac{q^{n-1}(q^n-1)}{q-1} = b_7
\]
and there are no more than $a_7 = n(q-2)/2$ of these elements in $H_p$. On the other hand, if $\nu(x) \geqs 4$ then $|x^G| >\frac{1}{2}q^{4n-10} = b_8$ and thus 
$\rho_1 <  \log(q-1) \cdot (a_7^2b_7^{-1}+a_8^2b_8^{-1})$.

For each $i \in \{0,1,2,3\}$ it is clear that the explicit upper bound $\rho_i < f_i(n,q)$ presented above implies that $\rho_i \to 0$ as $n$ or $q$ tends to infinity. In particular, this means that $\b(G) \to 0$ as $|G| \to \infty$. In addition, one can check that if $q \geqs 3$ then the above estimates imply that the inequality in \eqref{e:spn_prob} is satisfied unless $(n,q) = (10,3)$. To handle the latter case we can set $a_2 = |H_2| = 2^{17}$ in the upper bound on $\rho_0$ and then it is easy to check that the previous bounds are sufficient.

Now assume $q = 2$. Here $\mathcal{P}_1 = \emptyset$, $\mathcal{P}_2 = \{3\}$ and $\mathcal{P}_4 = \{5\}$, which means that
\[
\what{\b}(G) < a_3^2b_3^{-1} + \frac{1}{2}n(n-3)a_4^2b_4^{-1} + a_5^2b_5^{-1} + a_6^2b_6^{-1},
\]
where the $a_i,b_i$ terms are defined as above. One checks that this upper bound is sufficient unless $n = 10$, in which case we can use {\sc Magma} to show that  $\Sigma(G)<1$.

\vs

\noindent \emph{Case 2. $n = 8$}

\vs

Fix a prime $p \in \pi'(G)$ and let $x \in G$ be an element of order $p$. 
For $p=2$ we have $|H_2| \leqs 2^6(q+1)^4 = a_2$ and 
\[
|x^G| \geqs \frac{|{\rm Sp}_8(q)|}{2|{\rm Sp}_4(q)|^2} > \frac{1}{4}q^{16} = b_2
\]
if $\nu(x) = 4$. So by arguing as in Case 1 we deduce that $\rho_0< a_1^2b_1^{-1}+a_2^2b_2^{-1}$, where $a_1 = 4+6q(q^2-1)$ and $b_1 = q^6(q^2+1)(q^4+1)$. 

Now assume $p \geqs 3$ is odd, so $p \in \mathcal{P}_m$ with $m \in \{1,2,3,4,6,8\}$. For $m = 4$ we have $|H_p| \leqs (q^2+1)^2 = a_3$ and $|x^G|>\frac{1}{2}(q+1)^{-1}q^{25} = b_3$. Similarly, if $m \in \{3,6,8\}$ then $|H_p| \leqs q^4+1 = a_4$ and $|x^G|>\frac{1}{2}(q+1)^{-1}q^{31} = b_4$, which allows us to conclude that
\[
\rho_3 < \log(q^2+1) \cdot a_3^2b_3^{-1} + 3\log(q^4+1) \cdot a_4^2b_4^{-1}.
\]
Next suppose $m=2$, in which case $|H_p| = ((q+1)_p)^4(4!)_p \leqs 3(q+1)^4 = a_6$. If $\nu(x) = 2$ then $|x^G| = q^7(q^8-1)/(q+1) = b_5$ and by arguing as in Case 1 we deduce that there are at most $a_5 = 4q$ such elements in $H_p$. And if $\nu(x) > 2$, then 
\[
|x^G| \geqs \frac{|{\rm Sp}_8(q)|}{|{\rm GU}_4(q)|} > \frac{1}{2}\left(\frac{q}{q+1}\right)q^{20} = b_6
\]
and thus $\rho_2 < \log(q+1) \cdot (a_5^2b_5^{-1} + a_6^2b_6^{-1})$.
Similarly, $\rho_1 <  \log(q-1) \cdot (a_7^2b_7^{-1} + a_8^2b_8^{-1})$, where
\[
a_7 = 4(q-2),\; a_8 = 3(q-1)^4,\; b_7 = \frac{q^{7}(q^8-1)}{q-1},\; b_8 = \frac{1}{2}q^{20}.
\]

Putting these estimates together, it is clear that $\b(G) \to 0$ as $q \to \infty$. Furthermore, we deduce that \eqref{e:spn_prob} holds (with $\ell=4$) if $q \geqs 4$. Finally, for $q \in \{2,3\}$ we can use {\sc Magma} to check that $\Sigma(G)<1$. 

\vs

\noindent \emph{Case 3. $n = 6$}

\vs

This is very similar to the previous case. As usual, fix a prime $p \in \pi'(G)$ and let $x \in G$ be an element of order $p$.

First assume $p=2$ and note that $|H_2| \leqs 2^3(q+1)^3 = a_2$. If $\nu(x) = 3$ then  
\[
|x^G| \geqs \frac{|{\rm Sp}_6(q)|}{2|{\rm GU}_3(q)|} > \frac{1}{4}\left(\frac{q}{q+1}\right)q^{12} = b_2,
\]
otherwise $|x^G| = q^4(q^4+q^2+1) = b_1$ and as in Case 1 we see that there are at most $a_1 = 3+3q(q^2-1)$ involutions $x \in H_2$ with $\nu(x) = 2$. Therefore, $\rho_0< a_1^2b_1^{-1}+a_2^2b_2^{-1}$. 

Now assume $p \geqs 3$ is odd, so $p \in \mathcal{P}_m$ with $m \in \{1,2,3,4,6\}$. If $m = 4$ then $|H_p| \leqs q^2+1 = a_3$ and $|x^G|>\frac{1}{2}(q+1)^{-1}q^{17} = b_3$. Similarly, if $m \in \{3,6\}$ then $|H_p| \leqs q^2+q+1 = a_4$ and $|x^G|>\frac{1}{2}(q+1)^{-1}q^{19} = b_4$, which allows us to conclude that
\[
\rho_3 < \log(q^2+1) \cdot a_3^2b_3^{-1} + 2\log(q^2+q+1) \cdot a_4^2b_4^{-1}.
\]
Next suppose $m=2$, in which case $|H_p| = ((q+1)_p)^3(3!)_p \leqs 3(q+1)^3 = a_6$. If $\nu(x) = 2$ then 
\[
|x^G| = \frac{|{\rm Sp}_6(q)|}{|{\rm Sp}_4(q)||{\rm GU}_1(q)|} = \frac{q^5(q^6-1)}{q+1} = b_5
\]
and by arguing as in Case 1 we deduce that there are at most $a_5 = 3q$ such elements in $H_p$. Otherwise, we have $|x^G| > \frac{1}{2}(q+1)^{-1}q^{13} = b_6$ and we conclude that 
\[
\rho_2 < \log(q+1)\cdot (a_5^2b_5^{-1} + a_6^2b_6^{-1}).
\]
Similarly, $\rho_1 <  \log(q-1) \cdot (a_7^2b_7^{-1} + a_8^2b_8^{-1})$, where
\[
a_7 = 3(q-2),\; a_8 = 3(q-1)^3,\; b_7 = \frac{q^5(q^6-1)}{q-1},\; b_8 = \frac{1}{2}q^{12}.
\]

It is clear that each upper bound $\rho_i < f_i(q)$ given above has the property that $f_i(q) \to 0$ as $q \to \infty$, whence $G$ satisfies the asymptotic statement in Theorem \ref{t:class_main}. In addition, one checks that the given estimates imply that 
$\Sigma(G)<1$ if $q \geqs 4$. Finally, for $q \in \{2,3\}$ we can use {\sc Magma} to show that 
$\Sigma(G)<1$. 
\end{proof}

\begin{lem}\label{l:sp4}
The conclusion to Theorem \ref{t:class_main} holds when $G = {\rm PSp}_4(q)$.
\end{lem}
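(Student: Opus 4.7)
The plan is to follow the template of Lemma \ref{l:spn}, specialised to $n = 4$ and $\ell = 2$. With $d = (2,q-1)$, we have $|N_G(H_r):H_r| = (q-1)^2/d$, so the condition $\Sigma(G) < 1$ reduces to verifying
\[
\what{\b}(G) < \left(1 - q^{-1}\right)^2.
\]
To obtain the asymptotic statements in parts (ii) and (iii) of Theorem \ref{t:class_main}, it suffices to produce an estimate on $\what{\b}(G)$ that tends to $0$ as $q \to \infty$. Because ${\rm PSp}_4(3) \cong {\rm U}_4(2)$ is the unique exception in part (i), and this case is already addressed via Lemma \ref{l:un} and Remark \ref{r:u42}, for the inequality $\a(G) < 1$ I may assume $q \geqs 4$.

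For an odd prime $p \in \pi'(G)$ and an element $x \in G$ of order $p$, we have $p \in \mathcal{P}_m$ with $m \in \{1, 2, 4\}$ (since the primitive prime divisors of $|{\rm PSp}_4(q)|$ can only come from $q-1$, $q+1$ or $q^2+1$). For $m = 4$ I would use $|H_p| \leqs q^2+1$ together with the lower bound $|x^G| \geqs |{\rm Sp}_4(q)|/|{\rm GU}_1(q^2)|$ coming from the centraliser of a regular element whose order divides $q^2+1$, yielding a tiny contribution to $\rho_3$. For $m = 2$, $H_p$ embeds in a subgroup $L$ of type ${\rm GU}_1(q) \wr S_2$, and I would split the contribution to $\rho_2$ according to whether $\nu(x) = 2$ (counting the relevant elements in $L$ and using the explicit class size $|x^G|$ for an element with a rational eigenspace of codimension two) or $\nu(x) \geqs 3$ (using a class-size bound coming from a centraliser of the form ${\rm GU}_2(q)$). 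The case $m = 1$ runs parallel, with $H_p$ embedded in ${\rm GL}_1(q) \wr S_2$.

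For $p = 2$ and $q$ odd, Lemma \ref{l:nt}(i) gives $|H_2| = ((q^2-1)_2)^2$, and $H_2$ embeds in a maximal subgroup $L$ of type ${\rm Sp}_2(q) \wr S_2$ (the stabiliser of an orthogonal decomposition $V = V_1 \perp V_2$). Involutions in $G$ have $\nu \in \{2, 4\}$, and I would count elements of each type inside $L$, combine with class-size lower bounds such as $|x^G| \geqs |{\rm Sp}_4(q)|/(2|{\rm Sp}_2(q)|^2)$ for $\nu = 4$, and sum to obtain the contribution $\rho_0$. Adding $\rho_0,\rho_1,\rho_2,\rho_3$ should yield an explicit estimate $\what{\b}(G) \leqs f(q)$ with $f(q) \to 0$ as $q \to \infty$, which settles (ii) and (iii).

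The main obstacle will be verifying the sharper inequality $f(q) < (1 - q^{-1})^2$ for small values of $q$. The crude bounds are unlikely to suffice below some explicit threshold, and for the residual values (expected to be $q \in \{4, 5, 7, 8, 9\}$ or a similarly short list) I would appeal to {\sc Magma}, either by computing $\Sigma(G)$ directly from the definition, or, as in Remark \ref{r:u42}(a), by evaluating each $Q_p(G) = 1 - s|H_p|/|G{:}H_p|$ via a double-coset enumeration and verifying $\a(G) < 1$ group by group.
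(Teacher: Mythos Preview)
Your plan for odd $p$ is essentially the paper's argument and would go through, but the treatment of $p=2$ has a genuine gap that the template of Lemma~\ref{l:spn} cannot fill when $n=4$.

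The issue is with your split ``$\nu \in \{2,4\}$''. In ${\rm PSp}_4(q)$ with $q$ odd, every involution has $\nu(x)=2$: both the $t_1$ class (lifting to $\mathrm{diag}(-I_2,I_2)$, with centraliser of type ${\rm Sp}_2(q)^2$) and the $t_2$ or $t_2'$ class have largest eigenspace of dimension~$2$. So your proposed split does not separate the two classes, and the bound $|x^G| \geqs |{\rm Sp}_4(q)|/(2|{\rm Sp}_2(q)|^2) = \tfrac{1}{2}q^2(q^2+1)$ that you assign to ``$\nu=4$'' is in fact the size $b_1$ of the \emph{small} $t_1$ class. The real difficulty is controlling the contribution of this class: if you follow the Lemma~\ref{l:spn} recipe and count $t_1$ involutions inside $L$ of type ${\rm Sp}_2(q) \wr S_2$, the swap coset alone contributes about $q(q^2-1)/2$ such involutions (indeed the Lemma~\ref{l:spn} formula $a_1 = \ell + \binom{\ell}{2}|{\rm Sp}_2(q)|$ gives $a_1 \approx q^3$ when $\ell=2$). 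Since $b_1 \approx \tfrac{1}{2}q^4$, this yields $a_1^2 b_1^{-1} \approx q^2$, which neither tends to $0$ nor stays below $(1-q^{-1})^2$. So counting in $L$ fails here, and this is precisely why the paper treats $n=4$ separately.

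The paper's fix is to count $t_1$ involutions directly in $H_2$ rather than in $L$, invoking \cite[Proposition~3.6]{BH} to obtain the much sharper bound $a_1 = q+2$. Combined with $b_1 = \tfrac{1}{2}q^2(q^2+1)$ for the $t_1$ class and the crude $a_2 = |H_2| \leqs 4(q+1)^2$, $b_2 = \tfrac{1}{2}q^3(q-1)(q^2+1)$ for the $t_2/t_2'$ class, one gets $\rho_0 \leqs a_1^2 b_1^{-1} + a_2^2 b_2^{-1} \to 0$. With this correction in place the rest of your outline (and the {\sc Magma} check for $q \in \{4,5,7,8\}$) matches the paper.
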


\begin{proof}
We may assume $q \geqs 4$ since ${\rm PSp}_{4}(2)' \cong {\rm L}_2(9)$ and ${\rm PSp}_4(3) \cong {\rm U}_4(2)$. Note that we have $\Sigma(G) < 1$ if
\begin{equation}\label{e:sp4_prob}
\what{\b}(G) < \frac{q^4}{(q+1)^2(q^2+1)}.
\end{equation}
Fix a prime $p \in \pi'(G)$ and let $x \in G$ be an element of order $p$.

Suppose $p=2$, so $q$ is odd and $G$ has two conjugacy classes of involutions; in the notation of \cite[Table 4.5.1]{GLS}, $x$ is either a type $t_1$ involution, or $x$ is of type $t_2$ (if $q \equiv 1 \imod{4}$) or $t_2'$ (if $q \equiv 3 \imod{4}$). If $x$ is a $t_1$ involution, then
\[
|x^G| = \frac{|{\rm Sp}_4(q)|}{2|{\rm Sp}_2(q)|^2} = \frac{1}{2}q^2(q^2+1) = b_1
\]
and the proof of \cite[Proposition 3.6]{BH} shows that there are at most $a_1 = q+2$ such involutions in $H_2$. On the other hand, if $x$ is a $t_2$ or $t_2'$ involution then 
\[
|x^G| \geqs \frac{|{\rm Sp}_4(q)|}{2|{\rm GU}_2(q)|} = \frac{1}{2}q^3(q-1)(q^2+1) = b_2
\]
and we note that $|H_2| \leqs 4(q+1)^2 = a_2$. This implies that $\rho_0 \leqs a_1^2b_1^{-1} + a_2^2b_2^{-1}$. 

For the remainder, we may assume $p$ is odd, in which case $p \in \mathcal{P}_m$ with $m \in \{1,2,4\}$. If $m=4$ then $p$ divides $q^2+1$ and we have
$|H_p| \leqs q^2+1 = a_3$ and 
\[
|x^G| = \frac{|{\rm Sp}_4(q)|}{|{\rm GU}_1(q^2)|} = q^4(q^2-1)^2 = b_3,
\]
whence $\rho_3 \leqs \log(q^2+1) \cdot a_3^2b_3^{-1}$. Similarly, if $m=2$ then we have $|H_p| \leqs (q+1)^2 = a_4$ and $|x^G| \geqs q^3(q-1)(q^2+1) = b_4$, while for $m=1$ we see that $|H_p| \leqs (q-1)^2 = a_5$ and $|x^G| \geqs q^3(q+1)(q^2+1) = b_5$. Therefore,
\[
\rho_2 \leqs \log(q+1) \cdot a_4^2b_4^{-1},\;\; \rho_1 \leqs \log(q-1) \cdot a_5^2b_5^{-1}.
\]

Bringing these estimates together, it is clear that $\b(G) \to 0$ as $q \to \infty$. In addition, we deduce that the inequality in \eqref{e:sp4_prob} holds if $q \geqs 9$. Finally, for $q \in \{4,5,7,8\}$ we can use {\sc Magma} to verify the bound $\Sigma(G)<1$.  
\end{proof}

\subsection{Orthogonal groups}\label{ss:orth}

To complete the proof of Theorem \ref{t:class_main}, we may assume $G$ is an orthogonal group. As before, we continue to work with the notation defined in \eqref{e:defs}, \eqref{e:pm}, \eqref{e:alpha} and \eqref{e:nu}.

We begin by handling the odd-dimensional groups of the form $G = \O_n(q)$, where $n \geqs 7$ is odd and $q$ is odd. 

\begin{lem}\label{l:on_odd}
The conclusion to Theorem \ref{t:class_main} holds when $G = \O_n(q)$ and $n \geqs 7$.
\end{lem}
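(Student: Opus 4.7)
The plan is to adapt the proof of Lemma \ref{l:spn} to the odd-dimensional orthogonal setting, exploiting the close parallel between $\O_{2\ell+1}(q)$ and ${\rm PSp}_{2\ell}(q)$: both groups have order of the form $\tfrac{1}{2}q^{\ell^2}\prod_{i=1}^{\ell}(q^{2i}-1)$ for odd $q$, both have $|N_G(H_r){:}H_r| = (q-1)^\ell$, and the conjugacy classes of semisimple elements of prime order admit an entirely analogous description in terms of the integer $\nu(x)$ defined in \eqref{e:nu}. Writing $n = 2\ell + 1$ with $\ell \geqs 3$, it therefore suffices to establish an inequality of the form
\[
\what{\b}(G) \leqs \left(1-q^{-1}\right)^{\ell}
\]
(up to the mild factor $\prod_{i=1}^{\ell}(1-q^{-2i})^{-1}$ coming from $|H_r||N_G(H_r)|/|G|$), together with the asymptotic decay of $\b(G)$ and $\gamma(G)$ as $|G|\to\infty$ required by parts (ii) and (iii) of Theorem \ref{t:class_main}.

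For the contribution $\rho_0$ from $p=2$, I would use Lemma \ref{l:nt}(i) to bound $|H_2| \leqs 2^{c}(q+1)^{\ell}$ for a small constant $c$, noting that $H_2$ embeds in a subgroup $L$ of type ${\rm O}_2^{\pm}(q) \wr S_\ell \perp {\rm O}_1(q)$. An involution $x \in G$ satisfies $\nu(x) = 2k$ with $1 \leqs k \leqs \ell$; for $\nu(x) = 2$ I would count inside $L$ to obtain a sharp bound on $|x^G \cap H_2|$ that pairs well with the comparatively small class-size lower bound from \cite[Table B.1]{BGiu}, while for $\nu(x) \geqs 4$ the trivial bound $|x^G \cap H_2| \leqs |H_2|$ is already sufficient against the much larger $|x^G|$. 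For odd $p \in \mathcal{P}_m$ I split into the four contributions $\rho_1,\rho_2,\rho_3$ (with $\rho_3$ absorbing all $m \geqs 3$) exactly as in Case~1 of Lemma \ref{l:spn}. When $m=1$ or $m=2$, $H_p$ lies in a subgroup of type ${\rm GL}_1(q)\wr S_\ell$ or ${\rm GU}_1(q)\wr S_\ell$ acting on a non-degenerate $2\ell$-dimensional subspace, so elements of minimal support $\nu(x)=2$ can be enumerated inside the wreath product, and all remaining elements of order $p$ are treated via $|x^G\cap H_p| \leqs |H_p|$. For $m \geqs 3$ (in particular $p \geqs 5$), Lemmas \ref{l:nt}(ii) and \ref{l:factorial} produce explicit upper bounds on $|H_p|$ in terms of $q^{m/2}+1$ or $q^m-1$, which combine with the standard centralizer-based lower bounds on $|x^G|$ drawn from \cite[Chapter 3]{BGiu}; the number of primes contributing is controlled by Lemma \ref{l:primes}.

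Summing these estimates gives $\what{\b}(G) \leqs \sum_{i=0}^{3} f_i(n,q)$, where each $f_i(n,q) \to 0$ as $n$ or $q$ tends to infinity. This is enough to settle the asymptotic statements in Theorem \ref{t:class_main}(ii),(iii) for the family $\O_n(q)$. The main obstacle will be verifying the non-asymptotic inequality $\Sigma(G) < 1$ for the short list of small-parameter cases in which the generic bound fails, most likely a subset of $(n,q)$ with $n \in \{7,9,11\}$ and $q \in \{3,5\}$. For these residual cases I would either sharpen the estimate on $\rho_0$ by computing $i_2(H_2)$ directly in {\sc Magma} after embedding $H_2$ in a suitable maximal overgroup (in the spirit of the $F_4(3)$ argument in the proof of Theorem \ref{t:ex_main}), exploit the short length of $\mathcal{P}_m$ for small $q$ to replace the generic $\log q$ factors by exact counts, or, when the group is small enough, verify $\Sigma(G)<1$ directly via an enumeration of $(H_p,H_p)$ double cosets as in Lemma \ref{l:ex_comp}.
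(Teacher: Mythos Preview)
Your overall strategy matches the paper's proof closely, and the treatment of odd primes $p \in \mathcal{P}_m$ is essentially identical to what the paper does. However, there is one genuine error in the $p=2$ analysis that would cause the argument to fail as written.

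You assert that an involution $x \in G = \Omega_{2\ell+1}(q)$ has $\nu(x) = 2k$ even. This is true in ${\rm PSp}_{2\ell}(q)$ (where $\nu(x)$ is even or equals $\ell$), but it is \emph{false} in the odd-dimensional orthogonal group, and this is precisely where the parallel with the symplectic case breaks down. An involution in $\Omega_{2\ell+1}(q)$ lifts to an element of ${\rm SO}_{2\ell+1}(q)$ with $(-1)$-eigenspace of even dimension $k \in \{2,4,\ldots,2\ell\}$, and $\nu(x) = \min(k,\,2\ell+1-k)$. In particular, taking $k = 2\ell$ gives $\nu(x) = 1$, and taking $k = 2\ell-2$ gives $\nu(x) = 3$. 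The $\nu(x)=1$ involutions are the crucial omission: their conjugacy class has size only about $\tfrac{1}{2}q^{\ell}(q^{\ell}-1)$, which is far smaller than the $\nu(x)=2$ class, so the trivial bound $|x^G \cap H_2| \leqs |H_2|$ applied against this tiny $|x^G|$ would produce a contribution that does not decay and in fact blows up.

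The paper's proof handles this by splitting the involution contribution into \emph{three} terms according to $\nu(x)=1$, $\nu(x)=2$, and $\nu(x)\geqs 3$, counting carefully inside $L$ of type $({\rm O}_2^{\varepsilon}(q)\wr S_\ell)\perp {\rm O}_1(q)$ (with $q \equiv \varepsilon \imod 4$) for each of the first two cases, and only applying the trivial bound for the third. Once you insert this extra layer into $\rho_0$, the rest of your plan goes through exactly as the paper's does; the residual small cases are $q=3$ with $7 \leqs n \leqs 19$, handled by setting $a_3 = |H_2|$ for $11 \leqs n \leqs 19$ and by direct {\sc Magma} verification of $\Sigma(G)<1$ for $n \in \{7,9\}$.
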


\begin{proof}
Write $n = 2\ell+1$ and note that $q$ is odd. It is easy to check that $\Sigma(G)<1$ if \eqref{e:spn_prob} holds, so our main aim is to verify the bound in \eqref{e:spn_prob}. 
Fix a prime $p \in \pi'(G)$ and let $x \in G$ be an element of order $p$.

Suppose $p=2$ and write $q \equiv \e \imod{4}$ with $\e = \pm 1$. Then
\[
|H_2| = 2^{\ell-1}((q-\e)_2)^{\ell}(\ell!)_2 \leqs 2^{2\ell-1}(q+1)^{\ell} = a_3
\]
and we note that $H_2 < L < G$, where $L$ is of type $({\rm O}_2^{\e}(q) \wr S_{\ell}) \perp {\rm O}_1(q)$. If $\nu(x) = 1$ then 
\[
|x^G| \geqs \frac{|{\rm SO}_{n}(q)|}{2|{\rm SO}_{n-1}^{-}(q)|} = \frac{1}{2}q^{\ell}(q^{\ell}-1) = b_1
\]
and by counting in $L$ we deduce that there are at most 
\[
1+\binom{\ell}{1}(q-\e) \leqs 1+\ell(q+1) = a_1
\]
such involutions in $H_2$. Similarly, if $\nu(x) = 2$ then
\[
|x^G| \geqs \frac{|{\rm SO}_{n}(q)|}{2|{\rm SO}_{n-2}(q)||{\rm SO}_2^{-}(q)|} = \frac{q^{n-2}(q^{n-1}-1)}{2(q+1)} = b_2
\]
and we see that $H_2$ contains no more than
\[
\binom{\ell}{1}(1+q-\e) + \binom{\ell}{2}(|{\rm O}_2^{\e}(q)| + (q-\e)^2) \leqs \ell(q+2) + \frac{1}{2}\ell(\ell-1)(q+1)(q+3) = a_2
\]
such elements. Finally, if $\nu(x) \geqs 3$ then 
\[
|x^G| \geqs \frac{|{\rm SO}_n(q)|}{2|{\rm SO}_{n-3}^{-}(q)||{\rm SO}_3(q)|} = \frac{q^{\frac{1}{2}(3n-9)}(q^{\frac{1}{2}(n-3)}-1)(q^{n-1}-1)}{2(q^2-1)} = b_3
\]
and we conclude that 
\[
\rho_0 \leqs a_1^2b_1^{-1}+a_2^2b_2^{-1}+a_3^2b_3^{-1}.
\]

Now assume $p$ is odd and write $p \in \mathcal{P}_m$. Note that $m \leqs (n-1)/e$, where $e = 2$ if $m$ is odd, otherwise $e = 1$. Set $k = \lfloor (n-1)/me \rfloor$. If $m=4$ then $p$ divides $(q^2+1)/2$ and we have 
\[
|H_p| = ((q^4-1)_p)^k(k!)_p \leqs ((q^2+1)/2)^{\frac{5}{16}(n-1)}=a_4
\]
and
\[
|x^G| \geqs \frac{|{\rm SO}_n(q)|}{|{\rm SO}_{n-4}(q)||{\rm GU}_1(q^2)|} > \frac{1}{2}\left(\frac{q}{q+1}\right)q^{4n-12} = b_4.
\]
Similarly, if $m \geqs 3$ and $m \ne 4$, then $|\mathcal{P}_m|<\frac{1}{2}(n-1)\log q$ (see Lemma \ref{l:primes}) and we compute
\[
|H_p| = ((q^{m}-1)_p)^k(k!)_p \leqs ((q^3+1)/2)^{\frac{5}{24}(n-1)}=a_5
\]
and
\[
|x^G| \geqs \frac{|{\rm SO}_n(q)|}{|{\rm SO}_{n-6}(q)||{\rm GU}_1(q^3)|} > \frac{1}{2}\left(\frac{q}{q+1}\right)q^{6n-24}=b_5.
\]
This implies that
\[
\rho_3 < \log(q^2+1) \cdot a_4^2b_4^{-1} + \frac{1}{2}(n-1)(n-3)\log q \cdot a_5^2b_5^{-1}.
\]

Now assume $m=2$, so $p$ divides $(q+1)/2$ and we have $k = \ell$. Here
\[
|H_p| = ((q+1)_p)^{\ell}(\ell!)_p \leqs ((q+1)/2)^{3\ell/2} = a_7
\]
and we see that $H_p < L < G$, where $L$ is a subgroup of type $({\rm O}_2^{-}(q) \wr S_{\ell}) \perp {\rm O}_1(q)$.  If $\nu(x) = 2$ then 
\[
|x^G| = \frac{|{\rm SO}_n(q)|}{|{\rm SO}_{n-2}(q)||{\rm GU}_1(q)|} = \frac{q^{n-2}(q^{n-1}-1)}{q+1} = b_6
\]
and by counting in $L$ we deduce that there are at most $a_6 = \ell q$ such elements in $H_p$. On the other hand, if $\nu(x)>2$ then 
\[
|x^G| \geqs \frac{|{\rm SO}_n(q)|}{|{\rm SO}_{n-4}(q)||{\rm GU}_2(q)|}>\frac{1}{2}\left(\frac{q}{q+1}\right)q^{4n-14} = b_7
\]
and it follows that 
\[
\rho_2 < \log((q+1)/2) \cdot (a_6^2b_6^{-1}+a_7^2b_7^{-1}).
\]
Similarly,  we have $\rho_1 < \log((q-1)/2) \cdot (a_8^2b_8^{-1}+a_9^2b_9^{-1})$ with
\[
a_8 = \ell(q-2),\; a_9 = ((q-1)/2)^{3\ell/2},\; b_8 = \frac{q^{n-2}(q^{n-1}-1)}{q-1},\; b_9 = \frac{1}{2}q^{4n-14}.
\]

It is clear to see that the above estimates imply that $\b(G) \to 0$ as $|G| \to \infty$. We also deduce that the inequality in \eqref{e:spn_prob} holds if $q \geqs 5$, or if $q = 3$ and $n \geqs 21$. If $q = 3$ and $11 \leqs n \leqs 19$ then by setting $a_3 = |H_2|$ it is easy to check that the above bounds are sufficient. Finally, for $(n,q) = (9,3), (7,3)$ we can use {\sc Magma} to verify the bound $\Sigma(G)<1$.
\end{proof}

\begin{lem}\label{l:on_evenplus}
The conclusion to Theorem \ref{t:class_main} holds when $G = {\rm P\O}_n^{+}(q)$ and $n \geqs 10$.
\end{lem}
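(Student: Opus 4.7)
The plan is to mirror the templates of Lemmas \ref{l:spn} (symplectic) and \ref{l:on_odd} (odd-dimensional orthogonal), which have the closest Lie-theoretic structure to the even $+$-type orthogonal groups. Write $n = 2\ell$ with $\ell \geqs 5$, set $d = (4,q^\ell-1)$, and record
\[
|G| = \frac{1}{d}q^{\ell(\ell-1)}(q^\ell-1)\prod_{i=1}^{\ell-1}(q^{2i}-1), \quad |N_G(H_r):H_r| = \frac{1}{|Z|}(q-1)^\ell,
\]
where $|Z|$ is the order of the centre of the simply connected cover. A short manipulation gives $|H_r||N_G(H_r)|/|G| \geqs (1-q^{-1})^\ell$, so $\Sigma(G)<1$ will follow once
\[
\what{\b}(G) \leqs (1-q^{-1})^\ell.
\]
I then derive explicit upper bounds on $\rho_0,\rho_1,\rho_2,\rho_3$ in turn.

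For $p=2$ (so $q$ odd), write $q \equiv \e \imod 4$ with $\e = \pm 1$, apply Lemma \ref{l:nt}(i) to bound $|H_2| \leqs 2^{c}(q+\e)^{\ell}$ for an explicit small constant $c$, and embed $H_2$ in a subgroup $L < G$ of type ${\rm O}_2^{\e}(q) \wr S_\ell$ (with the number of $-$-type factors constrained by the $+$-type hypothesis on $G$). I would split involutions $x \in H_2$ according to $\nu(x) \in \{1,2,\geqs 3\}$, and in each bucket combine the centraliser bounds from \cite[Section 3]{FPR2} and \cite[Table B.1]{BGiu} with a direct count inside $L$ to bound $|x^G \cap H_2|$. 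This yields $\rho_0 < a_1^2 b_1^{-1} + a_2^2 b_2^{-1} + a_3^2 b_3^{-1}$ of exactly the shape seen in Lemma \ref{l:on_odd}.

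For odd $p \in \mathcal{P}_m$, I would handle $m \geqs 3$, $m = 2$ and $m = 1$ separately. When $m \geqs 3$, the trivial bound $|x^G \cap H_p| \leqs |H_p|$ suffices: apply Lemmas \ref{l:nt}(ii) and \ref{l:factorial} to bound $|H_p|$ by an expression of the form $(q^{m/2}+1)^{O(n/m)}$ (treating $m \equiv 0,2 \imod 4$ and $m$ odd in parallel with the symplectic case), bound $|x^G|$ below using the minimal centraliser of a semisimple element whose eigenvalues lie in $\mathbb{F}_{q^m} \setminus \mathbb{F}_{q^{m/e}}$ (essentially $\mathrm{SO}_{n-2e_m}^{\pm}(q)$ times a torus factor), and sum over $m$ using $|\mathcal{P}_m| < \frac{1}{2}n\log q$ from Lemma \ref{l:primes}. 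When $m \in \{1,2\}$, embed $H_p$ in a subgroup of type $\mathrm{GL}_1(q) \wr S_\ell$ or $\mathrm{GU}_1(q) \wr S_\ell$ stabilising an orthogonal decomposition into hyperbolic or $-$-type $2$-spaces, and split by $\nu(x) \in \{2, >2\}$; the bounds will involve factors of $(q-1)/\gamma$ and $(q+1)/\delta$ with $\gamma = (4,q-1)$ and $\delta = (4,q+1)$, just as in the preceding two lemmas. Each resulting estimate $\rho_i < f_i(n,q)$ tends to $0$ as $n$ or $q$ tends to infinity, giving $\b(G) \to 0$ as $|G| \to \infty$.

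Summing the four bounds then verifies the required inequality outside a short list of small cases, which I expect to be confined to $q \in \{2,3\}$ with $n$ close to $10$. These will be handled exactly as the analogous exceptional cases in Lemmas \ref{l:spn} and \ref{l:on_odd}: either one sharpens an $a_i$ by computing $|H_p|$ or $i_p(H_p)$ exactly in {\sc Magma}, or one verifies $\Sigma(G)<1$ directly. The main obstacle I anticipate is the $D_\ell$-specific bookkeeping for $p=2$: the $+$-type hypothesis restricts which subgroups of type $\mathrm{O}_2^\pm(q) \wr S_\ell$ actually embed in $G$ (an even number of $-$-type factors is forced), and one must also track the effect of the central quotient on both $|H_2|$ and the $G$-fusion of involutions in $L$. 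These are refinements of the symplectic and odd orthogonal analyses rather than fundamentally new phenomena, but the exact constants in the small-case list depend sensitively on carrying them through correctly.
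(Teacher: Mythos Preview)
Your proposal is essentially the paper's own argument: the same target inequality $\what{\b}(G) \leqs (1-q^{-1})^{\ell}$, the same four-fold split into $\rho_0,\rho_1,\rho_2,\rho_3$, embedding $H_2$ and $H_p$ in wreath-type stabilisers of orthogonal decompositions of $V$ into $2$-spaces (with the parity of the number of minus-type factors dictated by the $+$-type of $G$ and the parity of $\ell$), and the same residual list of small $(n,q)$ handled by sharpening an $a_i$ or by direct {\sc Magma} computation. Two minor corrections you would discover on implementation: first, for $p=2$ in an even-dimensional orthogonal group every involution has $\nu(x)$ even (or $\nu(x)=\ell$), so your $\nu(x)=1$ bucket is empty and the paper uses only the two buckets $\nu(x)=2$ and $\nu(x)\geqs 4$; second, the ambient subgroups for $m\in\{1,2\}$ are of type ${\rm O}_2^{\pm}(q)\wr S_{\ell}$ (or $({\rm O}_2^{-}(q)\wr S_{\ell-1})\perp {\rm O}_2^{+}(q)$ when the parity forces it) rather than literal $\mathrm{GL}_1(q)\wr S_\ell$ or $\mathrm{GU}_1(q)\wr S_\ell$, though the torus parts you are implicitly using are the same.
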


\begin{proof}
Write $n = 2\ell$ and set $d = (4,q^{\ell}-1)$. Since $|N_G(H_r)| = \frac{1}{d}q^{\ell(\ell-1)}(q-1)^{\ell}$, it follows that $\Sigma(G)<1$ if \eqref{e:spn_prob} holds. Fix a prime $p \in \pi'(G)$ and let $x \in G$ be an element of order $p$.

Suppose $p=2$, so $q$ is odd and either $\nu(x)$ is even or $\nu(x) = \ell$. Note that if $\nu(x) = 2$ then
\[
|x^G| \geqs \frac{|{\rm SO}_{n}^{+}(q)|}{2|{\rm SO}_{n-2}^{-}(q)||{\rm SO}_2^{-}(q)|} = \frac{q^{n-2}(q^{(n-2)/2}-1)(q^{n/2}-1)}{2(q+1)} = b_1,
\]
otherwise we have
\[
|x^G| \geqs \frac{|{\rm SO}_{n}^{+}(q)|}{2|{\rm SO}_{n-4}^{-}(q)||{\rm SO}_4^{-}(q)|} > \frac{1}{4}q^{4n-16} = b_2
\]
if $n \geqs 14$, and 
\[
|x^G| \geqs \frac{|{\rm SO}_{n}^{+}(q)|}{2|{\rm GU}_{n/2}(q)|} > \frac{1}{4}\left(\frac{q}{q+1}\right)q^{\frac{1}{4}(n^2-2n)} = b_2
\]
if $n \in \{10,12\}$.

First assume $q \equiv 1 \imod{4}$. Then by applying Lemma \ref{l:nt}(i), we compute
\[
|H_2| = \frac{1}{4}2^{\ell-1}((q-1)_2)^{\ell}(\ell!)_2 \leqs 2^{n-3}(q-1)^{\ell} = a_2
\]
and $H_2 < L<G$ with $L$ of type ${\rm O}_{2}^{+}(q) \wr S_{\ell}$. By counting in $L$ we deduce that there are at most
\[
a_1 = \ell + \binom{\ell}{2}\left((q-1)^2+2(q-1)\right)
\]
involutions $x \in H_2$ with $\nu(x) = 2$.

Now assume $q \equiv 3 \imod{4}$. If $\ell$ is even then   
\[
|H_2| = \frac{1}{4}2^{\ell-1}((q+1)_2)^{\ell}(\ell!)_2 \leqs 2^{n-3}(q+1)^{\ell} = a_2
\]
and $H_2 < L<G$ with $L$ of type ${\rm O}_{2}^{-}(q) \wr S_{\ell}$. And by working in $L$ we calculate that $H_2$ contains no more than
\[
a_1 = \ell + \binom{\ell}{2}\left((q+1)^2+2(q+1)\right)
\]
involutions with $\nu(x) = 2$. Similarly, if $\ell$ is odd then 
\[
|H_2|  = \frac{1}{2}2^{\ell-1}((q+1)_2)^{\ell-1}((\ell-1)!)_2.2 \leqs 2^{n-2}(q+1)^{\ell-1} = a_2
\]
and $H_2$ is contained in a subgroup $L$ of type $({\rm O}_2^{-}(q) \wr S_{\ell-1})\perp {\rm O}_2^{+}(q)$. By counting in $L$, we calculate that the number of involutions $x \in H_2$ with $\nu(x) = 2$ is at most
\[
a_1 = 1+(\ell-1)((q^2-1)+1)+\binom{\ell-1}{2}\left((q+1)^2+2(q+1)\right).
\]

So for all odd $q$ we deduce that $\rho_0 < a_1^2b_1^{-1} + a_2^2b_2^{-1}$, where the precise expressions for $a_1$ and $a_2$ depend on the congruence classes of $n$ and $q$ modulo $4$, as defined above.

Now assume $p \geqs 3$ is odd, so $p \in \mathcal{P}_m$ for some $m \leqs n-2$. Set $k = \lfloor (n-2)/me \rfloor$, where $e=2$ if $m$ is odd, otherwise $e = 1$. If $m=4$ then $p$ divides $q^2+1$ and we have
\[
|x^G| \geqs \frac{|{\rm SO}_n^{+}(q)|}{|{\rm SO}_{n-4}^{-}(q)||{\rm GU}_1(q^2)|} > \frac{1}{2}\left(\frac{q}{q+1}\right)q^{4n-12} = b_3.
\]
In addition, since $p \geqs 5$ and $(q^{n/2}-1)_p \leqs \frac{1}{8}n(q^2+1)$ by Lemma \ref{l:nt}, we compute
\begin{align*}
|H_p| = ((q^4-1)_p)^k(k!)_p \cdot (q^{n/2}-1)_p & \leqs (q^2+1)^{\frac{5}{16}(n-2)} \cdot \frac{1}{8}n(q^2+1) \\
&  = \frac{1}{8}n(q^2+1)^{\frac{1}{16}(5n+6)} = a_3.
\end{align*}
Similarly, if $m \geqs 3$ and $m \ne 4$, then
\[
|x^G| \geqs \frac{|{\rm SO}_n^{+}(q)|}{|{\rm SO}_{n-6}^{-}(q)||{\rm GU}_1(q^3)|} > \frac{1}{2}\left(\frac{q}{q+1}\right)q^{6n-24} = b_4
\]
and
\[
|H_p| = ((q^m-1)_p)^k(k!)_p \cdot (q^{n/2}-1)_p.
\]
If $m \geqs 6$ is even, then $p$ divides $q^{m/2}+1$ and thus
\[
|H_p| < (q^{m/2}+1)^{\frac{5(n-2)}{4m}}\cdot q^{\frac{1}{2}n} \leqs (q^3+1)^{\frac{5}{24}(n-2)} \cdot q^{\frac{1}{2}n} = a_4,
\]
while we get 
\[
|H_p| < q^{\frac{5}{4}mk + \frac{1}{2}n} = q^{\frac{5}{8}(n-2)+\frac{1}{2}n} < a_4
\]
if $m \geqs 3$ is odd. This allows us to conclude that
\[
\rho_3 < \log(q^2+1) \cdot a_3^2b_3^{-1} + \frac{1}{2}n(n-5)\log q \cdot a_4^2b_4^{-1}.
\]

Next assume $m = 2$, so $p$ divides $q+1$. First observe that if $\ell$ is even then
\[
|H_p| = ((q+1)_p)^{\ell}(\ell!)_p \leqs (q+1)^{\frac{3}{4}n} = a_6
\]
and $H_p < L<G$ with $L$ of type ${\rm O}_2^{-}(q) \wr S_{\ell}$. Similarly, if $\ell$ is odd then 
\[
|H_p| = ((q+1)_p)^{\ell-1}((\ell-1)!)_p < a_6
\]
and $H_p$ is contained in a subgroup $L$ of type $({\rm O}_2^{-}(q) \wr S_{\ell-1})\perp {\rm O}_2^{+}(q)$. If $\nu(x) = 2$ then 
\[
|x^G| = \frac{|{\rm SO}_{n}^{+}(q)|}{|{\rm SO}_{n-2}^{-}(q)||{\rm GU}_1(q)|} = \frac{q^{n-2}(q^{(n-2)/2}-1)(q^{n/2}-1)}{q+1} = b_5,
\]
otherwise we have
\[
|x^G| \geqs \frac{|{\rm SO}_{n}^{+}(q)|}{|{\rm SO}_{n-4}^{+}(q)||{\rm GU}_2(q)|} > \frac{1}{2}\left(\frac{q}{q+1}\right)q^{4n-14} = b_6.
\]
And by counting in $L$, we deduce that there are at most $a_5 = \ell q$ elements $x \in H_p$ of order $p$ with $\nu(x) = 2$. It follows that 
$\rho_2 < \log(q+1) \cdot (a_5^2b_5^{-1} + a_6^2b_6^{-1})$.

Finally, suppose $m=1$. Here 
\[
|H_p| = ((q-1)_p)^{\ell}(\ell!)_p \leqs (q-1)^{\frac{3}{4}n} = a_8
\]
and $H_p < L<G$ with $L$ of type ${\rm O}_2^{+}(q) \wr S_{\ell}$. If $\nu(x) = 2$ then 
\[
|x^G| = \frac{|{\rm SO}_{n}^{+}(q)|}{|{\rm SO}_{n-2}^{+}(q)||{\rm GL}_1(q)|} = \frac{q^{n-2}(q^{(n-2)/2}+1)(q^{n/2}-1)}{q-1} = b_7
\]
and by working in $L$ we see that there are no more than $a_7 = \ell(q-2)$ of these elements in $H_p$. And for $\nu(x)>2$ 
we have $|x^G|>\frac{1}{2}q^{4n-14} = b_8$, so $\rho_1 < \log(q-1) \cdot (a_7^2b_7^{-1} + a_8^2b_8^{-1})$.

It is easy to see that each of the above bounds $\rho_i  < f_i(n,q)$ has the property that $f_i(n,q) \to 0$ as $n$ or $q$ tends to infinity. In particular, this implies that $\b(G) \to 0$ as $|G| \to \infty$. In addition, the above estimates imply that $\Sigma(G)<1$ unless $(n,q)$ is one of the following:
\[
(10,2), \; (10,3), \; (12,2), \; (12,3), \; (14,2).
\]
For $(n,q) = (10,2)$ we can use {\sc Magma} to compute
\[
\Sigma(G)  = \frac{11219267305357}{11749647974400} < 1
\] 
as required. And in each of the remaining cases, we can slightly modify our previous estimates in order to obtain the desired conclusion. For example, suppose $(n,q) = (10,3)$. Here we compute $i_2(H_2) = 2063$ and $\pi'(G) = \{5,7,11,13,41\}$, so we can set $a_2 = 2063$  in the upper bound on $\rho_0$, and we can also replace the term $\frac{1}{2}n(n-5)\log q$ in the upper bound on $\rho_3$ by $4$. One can then check that the revised estimates imply that $\Sigma(G)<1$.
\end{proof}

\begin{lem}\label{l:o8_plus}
The conclusion to Theorem \ref{t:class_main} holds when $G = {\rm P\O}_8^{+}(q)$.
\end{lem}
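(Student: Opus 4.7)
The plan is to follow the same strategy as in Lemma \ref{l:on_evenplus}, specialised to $n=8$ with $\ell = 4$, and then to handle a short list of exceptional small-$q$ cases computationally. Writing $d = (4, q^4 - 1)$, we have
\[
|G| = \frac{1}{d}q^{12}(q^4-1)^2(q^6-1)(q^2-1), \qquad |N_G(H_r):H_r| = \frac{1}{d}(q-1)^4,
\]
so $\Sigma(G) < 1$ is implied by $\what{\b}(G) \leqs (1 - q^{-1})^4$, which is the inequality \eqref{e:spn_prob} with $\ell = 4$. So the task is to produce good enough upper bounds on the $\rho_i$ in \eqref{e:alpha}.

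First I would fix a prime $p \in \pi'(G)$ and an element $x \in G$ of order $p$, and split into cases according to $p = 2$ and $p \in \mathcal{P}_m$ for odd $p$, where $m \in \{1,2,3,4,6\}$. For $p=2$ (with $q$ odd), $\nu(x) \in \{2,4\}$ and one gets $|H_2| \leqs 2^{n-2}(q-\e)^4 = a_2$, where $\e \in \{\pm 1\}$ with $q \equiv \e \imod{4}$, by embedding $H_2$ in a subgroup of type ${\rm O}_2^{\e}(q) \wr S_4$ (or $({\rm O}_2^{-}(q) \wr S_3) \perp {\rm O}_2^{+}(q)$ when $q \equiv 3 \imod 4$, since $\ell = 4$ is even the first option applies). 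Counting involutions of type $\nu(x) = 2$ in the overgroup gives a bound $a_1$ of order $q^2$, paired with $|x^G| \geqs b_1$ using the class size formulas from \cite[Table B.1]{BGiu}; for $\nu(x) = 4$ one uses $|x^G| \geqs b_2$ via the centraliser of type ${\rm GU}_4(q)$ or ${\rm SO}_4^{-}(q) \times {\rm SO}_4^{-}(q)$. This yields $\rho_0 < a_1^2b_1^{-1} + a_2^2b_2^{-1}$.

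For odd $p$, the analysis is parallel to that in Lemma \ref{l:on_evenplus}: if $m \in \{3,4,6\}$ then $|H_p| \leqs q^4 + 1$ (for $m = 4$ exactly $(q^2+1)^2$, and for $m = 3, 6$ the Sylow is cyclic of order dividing $(q^4 - 1)_p \leqs q^4 - 1$), paired with $|x^G| \geqs b_3$ coming from the bound $|C_G(x)| \leqs |{\rm GU}_1(q^3)| \cdot |{\rm SO}_2(q)|$ or $|{\rm GU}_1(q^2)|^2$ depending on $m$; this gives a contribution to $\rho_3$ controlled by $(\log q) \cdot a_3^2 b_3^{-1}$. For $m = 2$ one has $|H_p| \leqs (q+1)^4 \cdot (4!)_p$, embedded in ${\rm O}_2^{-}(q) \wr S_4$, and one separates $\nu(x) = 2$ (with $a_5 \leqs 4q$ contributing elements and $|x^G| = q^6(q^6-1)(q^2+1)/(q+1)$) from $\nu(x) \geqs 4$ (with $|x^G| \geqs \frac{1}{2}(q+1)^{-1}q^{4n-14}$). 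The case $m = 1$ is analogous using ${\rm O}_2^{+}(q) \wr S_4$. Summing these four contributions, each of shape $f_i(q) \to 0$ as $q \to \infty$, gives $\b(G) \to 0$, which settles the asymptotic claim.

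For the bound $\Sigma(G) < 1$, the estimates above will be sufficient for all but a handful of small $q$. I expect that by inspecting the tightened bounds one is left with $q \in \{2,3,4,5\}$ (and possibly $q = 7$), where the exact value of $|H_2|$ and of $i_2(H_2)$ can be computed directly in {\sc Magma}, and the sets $\mathcal{P}_m$ become explicit small sets of primes (e.g.\ for $q = 2$ one has $\pi'(G) = \{3,5,7\}$ with $\mathcal{P}_1 = \emptyset$, $\mathcal{P}_2 = \{3\}$, $\mathcal{P}_3 = \{7\}$, $\mathcal{P}_4 = \{5\}$), allowing us to replace the generic factor $\log(q+1)$ or $\frac{1}{2}n(n-5)\log q$ by the actual (small) count. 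If even this refinement fails in a case or two, one can fall back on constructing $G$ as a permutation group in {\sc Magma} to verify $\Sigma(G) < 1$ directly, exactly as in the proofs of Lemmas \ref{l:ex_comp} and \ref{l:on_evenplus}. The main obstacle is therefore not the generic estimate but managing the smallest fields, where the counting bounds on involutions and on semisimple Sylow subgroups of small primes have to be sharpened (in particular for $p = 3$ when $q = 2$ and for $p = 2$ when $q = 3$), but this is a computation of the same character as the $(n,q) = (10,3)$ case treated in Lemma \ref{l:on_evenplus}.
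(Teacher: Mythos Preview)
Your plan is essentially the same as the paper's, and the generic estimates you sketch (with the case split on $m \in \{1,2,3,4,6\}$ and the embedding of $H_2$ and $H_p$ in wreath-product type subgroups) do indeed suffice for $q \geqs 8$, with refinements of the kind you describe handling $q \in \{4,5,7\}$. In fact the paper uses a slightly cruder bound for $p=2$ than you propose (it does not separate $\nu(x)=2$ from $\nu(x)=4$, but simply takes $|H_2| \leqs 16(q+1)^4$ against the single class-size lower bound coming from the ${\rm GU}_4(q)$ centraliser), so your version is, if anything, a little more careful there.

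There is, however, one genuine gap. Your stated fallback for the smallest fields is to ``verify $\Sigma(G) < 1$ directly'' in {\sc Magma}. This works for $q=3$, but for $q=2$ it fails: one computes $\Sigma(G) = 6518753/6220800 > 1$, so no amount of sharpening the $\what{Q}_p$ estimates or the involution count will rescue the argument via \eqref{e:spn_prob}. The paper deals with this by abandoning $\what{Q}_p$ altogether for $G = {\rm P\O}_8^+(2)$ and instead computing the \emph{exact} values
\[
Q_p(G) = 1 - \frac{s|H_p|}{|G:H_p|}
\]
for each $p \in \pi(G) = \{2,3,5,7\}$, where $s$ is the number of $(H_p,H_p)$ double cosets of size $|H_p|^2$; summing these gives $\a(G) < 1$ directly. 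You should flag that the $q=2$ case requires this stronger computation rather than the $\Sigma(G)$ route.
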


\begin{proof}
As in the proof of the previous lemma, we observe that $\Sigma(G)<1$ if the inequality in \eqref{e:spn_prob} holds (with $\ell=4$). For $q=3$, we can directly verify the bound $\Sigma(G)<1$ with the aid of {\sc Magma}. However, for $q=2$ we get 
\[
\Sigma(G) = \frac{6518753}{6220800} > 1.
\]
To resolve the latter case, we use {\sc Magma} to compute
\[
Q_p(G) = 1 - \frac{s|H_p|}{|G:H_p|}
\]
for each $p \in \pi(G) = \{2,3,5,7\}$, where $s$ is the number of $(H_p,H_p)$ double cosets in $G$ of size $|H_p|^2$. In this way, we obtain
\[
Q_2(G) = \frac{38429}{42525}, \; Q_3(G) = \frac{4307}{89600}, \; Q_5(G) = \frac{31}{435456}, \; Q_7(G) = \frac{1}{4147200} 
\]
and thus $\a(G) < 1$ as required. 

For the remainder, we may assume $q \geqs 4$. As usual, fix a prime $p \in \pi'(G)$ and let $x \in G$ be an element of order $p$.

If $p=2$ then $|H_2| \leqs 16(q+1)^4 = a_1$ and 
\[
|x^G| \geqs \frac{|{\rm SO}_8^{+}(q)|}{2|{\rm GU}_4(q)|} = \frac{1}{2}q^6(q-1)(q^2+1)(q^3-1) = b_1,
\]
so we have $\rho_0 \leqs a_1^2b_1^{-1}$.

Now assume $p$ is odd, so $p \in \mathcal{P}_m$ with $m \in \{1,2,3,4,6\}$. If $m \in \{3,4,6\}$ then we have $|H_p| \leqs (q^2+1)^2 = a_2$ and
\[
|x^G| \geqs \frac{|{\rm SO}_8^{+}(q)|}{|{\rm GU}_2(q^2)|} = q^{10}(q^2-1)^2(q^6-1) = b_2,
\]
which gives $\rho_3 \leqs 3\log(q^2+q+1) \cdot a_2^2b_2^{-1}$. Similarly, if $m=2$ then $|H_p| \leqs 3(q+1)^4 = a_3$ and
\[
|x^G| \geqs \frac{|{\rm SO}_8^{+}(q)|}{|{\rm GU}_4(q)|} = q^{6}(q-1)(q^2+1)(q^3-1) = b_3,
\]
whereas $|H_p| \leqs 3(q-1)^4 = a_4$ and 
\[
|x^G| \geqs \frac{|{\rm SO}_8^{+}(q)|}{|{\rm GL}_4(q)|} = q^{6}(q+1)(q^2+1)(q^3+1) = b_4
\]
if $m=1$. It follows that 
\[
\rho_2 \leqs \log(q+1) \cdot a_3^2b_3^{-1},\;\; \rho_1 \leqs \log(q-1) \cdot a_4^2b_4^{-1}.
\]

It is clear from the above estimates that $\b(G) \to 0$ as $q \to \infty$. Furthermore, we deduce that \eqref{e:spn_prob} holds (with $\ell=4$) if $q \geqs 8$, so it just remains to handle the groups with $q \in \{4,5,7\}$.

If $q = 7$ then using {\sc Magma} we compute $i_2(H_2) = 2831$ and by setting $a_1 = 2831$ in the previous estimate for $\rho_0$ it is easy to check \eqref{e:spn_prob} holds. And similarly if $q = 5$, noting that $i_2(H_2) = 495$. Finally, suppose $q=4$. Here $\pi'(G) = \{3,5,7,13,17\}$ with $\mathcal{P}_1 = \{3\}$ and $\mathcal{P}_2 = \{5\}$, so by defining $a_i,b_i$ as above for $i \in \{2,3,4\}$ we get 
\[
\what{\b}(G) \leqs 3a_2^2b_2^{-1} + a_3^2b_3^{-1} + a_4^2b_4^{-1} < \left(\frac{3}{4}\right)^4
\]
and the result follows.
\end{proof}

\begin{lem}\label{l:on_evenminus}
The conclusion to Theorem \ref{t:class_main} holds when $G = {\rm P\O}_n^{-}(q)$ and $n \geqs 8$.
\end{lem}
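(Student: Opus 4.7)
The plan is to follow the template of Lemma~\ref{l:on_evenplus} almost verbatim, writing $n = 2\ell$ and $d = (4,q^\ell+1)$ and using
\[
|G| = \frac{1}{d}q^{\ell(\ell-1)}(q^\ell+1)\prod_{i=1}^{\ell-1}(q^{2i}-1), \qquad |N_G(H_r):H_r| = \frac{1}{d}(q-1)^{\ell-1}(q+1)
\]
to reduce $\Sigma(G)<1$ to an explicit target inequality on $\what{\b}(G)$ of the same shape as \eqref{e:spn_prob}; the target differs from the plus-type target by only a factor of order $(q+1)/(q-1)$ and still behaves like $(1-q^{-1})^{\ell}$ up to a harmless constant.

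For the estimate of $\rho_0$ (the case $p=2$, so $q$ odd), I would embed $H_2$ in a subgroup $L<G$ of type $({\rm O}_2^{\e}(q) \wr S_{\ell-1}) \perp {\rm O}_2^{-\e}(q)$, choosing $\e=\pm 1$ according to the congruence class of $q$ modulo $4$ and the parity of $\ell$ so that the Sylow $2$-subgroup of ${\rm O}_2^{\e}(q)$ absorbs the dominant $2$-part of $|G|$. Counting involutions in $L$ by $\nu$-type, together with the centraliser bounds from \cite[Table~B.1]{BGiu} and the class-size estimates already used in Lemma~\ref{l:on_evenplus}, will then produce bounds $\rho_0 < a_1^2b_1^{-1} + a_2^2b_2^{-1} + a_3^2b_3^{-1}$ of exactly the same shape as in the plus case, with $a_i,b_i$ explicit in $n,q$.

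For odd $p \in \mathcal{P}_m$, the analysis mirrors Lemma~\ref{l:on_evenplus}: the ranges $m=4$, $m \in \{3,\dots,n-2\}\setminus\{4\}$, $m=2$ and $m=1$ each produce a bound of the same form as in the plus case, using Lemmas~\ref{l:nt}--\ref{l:factorial} for $|H_p|$ and the class-size bounds from \cite[Section~3]{BGiu} for $|x^G|$; for $m \in \{1,2\}$ the subgroup $H_p$ is embedded in a mixed wreath product of the same type as for $p=2$. The one genuinely new feature is that the value $m=n$ now occurs, since $q^{\ell}+1$ divides $|G|$, and so $\rho_3$ acquires an extra summand over $\mathcal{P}_n$. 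This term is negligible: for $p \in \mathcal{P}_n$ one has $|H_p| \leqs q^{\ell}+1$ and $|x^G| \geqs |G|/|{\rm GU}_1(q^\ell)|$, giving a bound far smaller than the other $\rho_3$-terms.

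Assembling the four estimates $\rho_i < f_i(n,q)$ one checks, exactly as in the plus case, that each $f_i(n,q) \to 0$ as $n$ or $q$ tends to infinity, which yields the asymptotic statement $\b(G) \to 0$. The main obstacle will be the short list of small cases in which the crude sum $\sum_i f_i(n,q)$ does not beat the target; the likely offenders are $(n,q) \in \{(8,2),(8,3),(10,2),(10,3),(12,2)\}$, with ${\rm P\O}_8^{-}(2)$ the most delicate since its ratio $|H_r||N_G(H_r)|/|G|$ is particularly meagre. For these I expect direct {\sc Magma} computation to suffice: either one verifies $\Sigma(G)<1$ directly, or, where that fails, one computes each $Q_p(G) = 1 - s|H_p|/|G:H_p|$ exactly via double cosets, as was done for ${\rm P\O}_8^{+}(2)$ in Lemma~\ref{l:o8_plus}. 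Intermediate cases should succumb to replacing the crude $|H_2|$ by the exact value $i_2(H_2)$ computed in {\sc Magma}, and/or by replacing the $\log q$ multiplicity bound on $|\mathcal{P}_m|$ by its actual (small) value.
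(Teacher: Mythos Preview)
Your proposal is correct and follows essentially the same approach as the paper: the same reduction to the target inequality \eqref{e:spn_prob}, the same embeddings of $H_2$ and $H_p$ in products of ${\rm O}_2^{\pm}(q)$-wreath products (with the case split on $q \bmod 4$ and the parity of $\ell$), and the same treatment of $\rho_3$ with the extra contribution from $q^{\ell}+1$ absorbed via an additional $(q^{n/2}+1)_p$ factor rather than a separate $m=n$ term. The paper's list of small residual cases is $(10,2),(10,3),(12,2),(12,3),(14,2)$ for $n\geqs 10$ and $q\in\{2,3\}$ for $n=8$, all of which are dispatched either by a direct {\sc Magma} check of $\Sigma(G)<1$ or by sharpening the crude $a_i$---no double-coset computation is needed, so your worry about ${\rm P\O}_8^-(2)$ is unfounded.
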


\begin{proof}
Write $n = 2\ell$, $d = (4,q^{\ell}+1)$ and note that $|N_G(H_r):H_r| = \frac{1}{d}(q^2-1)(q-1)^{\ell-2}$, which means that $\Sigma(G)<1$ if the bound in \eqref{e:spn_prob} is satisfied. Fix a prime $p \in \pi'(G)$ and let $x \in G$ be an element of order $p$. To begin with, we will assume $n \geqs 10$. 

\vs

\noindent \emph{Case 1. $n \geqs 10$}

\vs

First assume $p=2$. If $\nu(x) = 2$ then 
\[
|x^G| \geqs \frac{|{\rm SO}_n^{-}(q)|}{2|{\rm SO}_{n-2}^{+}(q)||{\rm SO}_2^{-}(q)|} = \frac{q^{n-2}(q^{(n-2)/2}+1)(q^{n/2}+1)}{2(q+1)} = b_1,
\]
otherwise $|x^G|>b_2$, where $b_2 = \frac{1}{4}q^{4n-16}$ if $n \geqs 14$ and $b_2 = \frac{1}{4}(q+1)^{-1}q^{c}$ if $n \in \{10,12\}$, with $c = 21$ if $n=10$ and $c = 31$ if $n=12$.

Suppose $q \equiv 1 \imod{4}$. Then 
\[
|H_2| = \frac{1}{2}2^{\ell-1}((q-1)_2)^{\ell-1}((\ell-1)!)_2.2 \leqs 2^{n-2}(q-1)^{\ell-1} = a_2
\]
and we observe that $H_2$ is contained in a subgroup $L<G$ of type $({\rm O}_{2}^{+}(q) \wr S_{\ell-1}) \perp {\rm O}_2^{-}(q)$. By counting in $L$ we see that there are no more than 
\[
a_1 = 1+\binom{\ell-1}{1}\left(1+(q^2-1)\right) + \binom{\ell-1}{2}\left((q-1)^2 +2(q-1)\right)
\]
involutions $x \in H_2$ with $\nu(x) = 2$.

Now assume $q \equiv 3 \imod{4}$. If $\ell$ is even, then   
\[
|H_2| = \frac{1}{2}2^{\ell-1}((q+1)_2)^{\ell-1}((\ell-1)!)_2.2 \leqs 2^{n-2}(q+1)^{\ell-1} = a_2
\]
and thus $H_2 < L<G$ with $L$ of type $({\rm O}_{2}^{-}(q) \wr S_{\ell-1}) \perp {\rm O}_2^{+}(q)$. By working in $L$ we deduce that $H_2$ contains at most
\[
a_1 = 1+\binom{\ell-1}{1}\left(1+(q^2-1)\right) + \binom{\ell-1}{2}\left((q+1)^2 +2(q+1)\right)
\]
involutions $x$ with $\nu(x) = 2$. Similarly, if $\ell$ is odd then 
\[
|H_2|  = \frac{1}{4}2^{\ell-1}((q+1)_2)^{\ell}(\ell!)_2 \leqs 2^{n-3}(q+1)^{\ell} = a_2
\]
and $H_2$ is contained in a subgroup $L$ of type ${\rm O}_2^{-}(q) \wr S_{\ell}$, which allows us to see that the number of involutions $x \in H_2$ with $\nu(x) = 2$ is at most
\[
a_1 = \binom{\ell}{1} + \binom{\ell}{2}\left((q+1)^2+2(q+1)\right).
\]
So for all odd $q$, we conclude that $\rho_0 < a_1^2b_1^{-1} + a_2^2b_2^{-1}$, where the expressions for $a_1$ and $a_2$ depend on the congruence classes of $n$ and $q$ modulo $4$, as defined above.

Now assume $p \geqs 3$ is odd, so $p \in \mathcal{P}_m$ for some $m \leqs n$. Set $k = \lfloor (n-2)/me \rfloor$, where $e=2$ if $m$ is odd, otherwise $e = 1$. If $m=4$ then $p$ divides $q^2+1$ and
\[
|x^G| \geqs \frac{|{\rm SO}_n^{-}(q)|}{|{\rm SO}_{n-4}^{+}(q)||{\rm GU}_1(q^2)|} > \frac{1}{2}\left(\frac{q}{q+1}\right)q^{4n-12} = b_3.
\]
Since $p \geqs 5$ and $(q^{n/2}+1)_p \leqs \frac{1}{4}n(q^2+1)$ (see Lemma \ref{l:nt}), we compute
\[
|H_p| = ((q^4-1)_p)^k(k!)_p \cdot (q^{n/2}+1)_p \leqs \frac{1}{4}n(q^2+1)^{\frac{1}{16}(5n+6)} = a_3.
\]
Similarly, if $m \geqs 3$ and $m \ne 4$, then
$|x^G| > \frac{1}{2}(q+1)^{-1}q^{6n-23} = b_4$
and
\[
|H_p| = ((q^m-1)_p)^k(k!)_p \cdot (q^{n/2}+1)_p \leqs (q^3+1)^{\frac{1}{24}(9n-10)}  = a_4.
\]
This implies that
\[
\rho_3 < \log(q^2+1) \cdot a_3^2b_3^{-1} + \frac{1}{2}n(n-4)\log q \cdot a_4^2b_4^{-1}.
\]

Next suppose $m = 2$. If $\ell$ is odd then
\[
|H_p| = ((q+1)_p)^{\ell}(\ell!)_p \leqs (q+1)^{\frac{3}{4}n} = a_6
\]
and $H_p < L<G$ with $L$ of type ${\rm O}_2^{-}(q) \wr S_{\ell}$. On the other hand, if $\ell$ is even, then 
\[
|H_p| = ((q+1)_p)^{\ell-1}((\ell-1)!)_p < a_6
\]
and $H_p$ is contained in a subgroup $L$ of type $({\rm O}_2^{-}(q) \wr S_{\ell-1})\perp {\rm O}_2^{+}(q)$. If $\nu(x) = 2$ then 
\[
|x^G| = \frac{|{\rm SO}_{n}^{-}(q)|}{|{\rm SO}_{n-2}^{+}(q)||{\rm GU}_1(q)|} = \frac{q^{n-2}(q^{(n-2)/2}+1)(q^{n/2}+1)}{q+1} = b_5,
\] 
otherwise we have $|x^G| > \frac{1}{2}(q+1)^{-1}q^{4n-13} = b_6$. By counting in $L$, we calculate that there are at most $a_5 = \ell q$ elements $x \in H_p$ of order $p$ with $\nu(x) = 2$ and it follows that 
\[
\rho_2 < \log(q+1) \cdot (a_5^2b_5^{-1} + a_6^2b_6^{-1}).
\]

Finally, suppose $m=1$. Here 
\[
|H_p| = ((q-1)_p)^{\ell-1}((\ell-1)!)_p \leqs (q-1)^{\frac{3}{4}(n-2)} = a_8
\]
and $H_p < L<G$ with $L$ of type $({\rm O}_2^{+}(q) \wr S_{\ell-1}) \perp {\rm O}_2^{-}(q)$. If $\nu(x) = 2$ then 
\[
|x^G| = \frac{|{\rm SO}_{n}^{-}(q)|}{|{\rm SO}_{n-2}^{-}(q)||{\rm GL}_1(q)|} = \frac{q^{n-2}(q^{(n-2)/2}-1)(q^{n/2}+1)}{q-1} = b_7,
\] 
and by working in $L$ we see that there are no more than 
$a_7 = (\ell-1)(q-1)$ of these elements in $H_p$. Otherwise $|x^G|>\frac{1}{2}q^{4n-14} = b_8$ and we conclude that
\[
\rho_1 < \log(q-1) \cdot (a_7^2b_7^{-1} + a_8^2b_8^{-1}).
\]

The above estimates $\rho_i < f_i(n,q)$ immediately imply that $\b(G) \to 0$ as $|G| \to \infty$. Moreover, we deduce that $\Sigma(G)<1$ unless $(n,q)$ is one of the following:
\[
(10,2), \; (10,3), \; (12,2), \; (12,3), \; (14,2).
\]
For $(n,q) = (10,2)$ we can use {\sc Magma} to verify the bound $\Sigma(G)<1$, and in the remaining cases we can modify our previous estimates. For example, suppose $(n,q) = (12,2)$. Here $|\pi'(G)| = 7$, $|H_3| = 3^6$ and we see that $\mathcal{P}_1 = \emptyset$, $\mathcal{P}_2 = \{3\}$, $\mathcal{P}_4 = \{5\}$. Therefore,
\[
\what{\b}(G) < a_3^2b_3^{-1}+5a_4^2b_4^{-1} + a_5^2b_5^{-1}+a_6^2b_6^{-1} < 2^{-6},
\]
where $a_6 = 3^6$ and the remaining $a_i,b_i$ are defined as above. 

\vs

\noindent \emph{Case 2. $n=8$}

\vs

To complete the proof, we may assume $n=8$. We can use {\sc Magma} to show that $\Sigma(G)<1$ when $q = 2,3$, so we may assume $q \geqs 4$.
Fix a prime $p \in \pi'(G)$ and let $x \in G$ be an element of order $p$.

If $p=2$ then $|H_2| \leqs 16(q+1)^3 = a_1$ and 
\[
|x^G| \geqs \frac{|{\rm SO}_8^{-}(q)|}{2|{\rm SO}_6^{+}(q)||{\rm SO}_2^{-}(q)|} = \frac{1}{2}q^6(q^2-q+1)(q^4+1) = b_1,
\]
so we have $\rho_0 \leqs a_1^2b_1^{-1}$.

Now assume $p$ is odd, so $p \in \mathcal{P}_m$ with $m \in \{1,2,3,4,6,8\}$. If $m \geqs 3$ then $|H_p| \leqs q^4+1 = a_2$ and
\[
|x^G| \geqs \frac{|{\rm SO}_8^{-}(q)|}{|{\rm SO}_4^{+}(q)||{\rm GU}_1(q^2)|} = q^{10}(q^4+1)(q^6-1) = b_2,
\]
which gives $\rho_3 \leqs 4\log(q^4+1) \cdot a_2^2b_2^{-1}$. For $m=2$ we observe that $|H_p| \leqs 3(q+1)^3 = a_3$ and
\[
|x^G| \geqs \frac{|{\rm SO}_8^{-}(q)|}{|{\rm SO}_6^{+}(q)||{\rm GU}_1(q)|} = q^6(q^2-q+1)(q^4+1) = b_3,
\]
whereas $|H_p| \leqs 3(q-1)^3 = a_4$ and 
\[
|x^G| \geqs \frac{|{\rm SO}_8^{-}(q)|}{|{\rm SO}_{6}^{-}(q)||{\rm GL}_1(q)|} = q^{6}(q^2+q+1)(q^4+1) = b_4
\]
if $m=1$. It follows that
\[
\rho_2 \leqs \log(q+1) \cdot a_3^2b_3^{-1},\;\; \rho_1 \leqs \log(q-1) \cdot a_4^2b_4^{-1}.
\]

It is clear that the above estimates imply that $\b(G) \to 0$ as $q$ tends to infinity. Furthermore, we deduce that \eqref{e:spn_prob} holds (with $\ell=4$) for all $q \geqs 4$. 
\end{proof}

\vs

This completes the proof of Theorem \ref{t:class_main}. In Remark \ref{r:u42}(a) we checked that Conjecture \ref{c:ls} holds in the special case $G = {\rm U}_4(2)$, so by combining Theorems \ref{t:ex_main} and  \ref{t:class_main}, we deduce that the proofs of Theorems \ref{t:main1} and \ref{t:main_asymptotic} are complete for groups of Lie type. Similarly, in view of Remark \ref{r:u42}(b), we have also proved Theorem \ref{t:main2} for all groups of Lie type.

\section{Sporadic groups}\label{s:spor}

In this final section, we assume $G$ is a sporadic simple group and we complete the proofs of Theorems \ref{t:main1} and \ref{t:main2}. As before, we define
\[
\a(G) = \sum_{p \in \pi(G)} Q_p(G).
\]
Then our main result is the following.

\begin{thm}\label{t:spor}
Let $G$ be a finite simple sporadic group. Then $\a(G)<1$.
\end{thm}

\begin{proof}
%
First assume $G$ is one of the following groups:
\[
{\rm M}_{11}, \, {\rm M}_{12}, \, {\rm M}_{23}, \, {\rm J}_1, \, {\rm J}_2, \, {\rm J}_3, \, {\rm HS}, \, {\rm He}, \, {\rm McL}, \, {\rm Suz}, \, {\rm Ru}, \, {\rm Fi}_{22}, \, {\rm Fi}_{23}, \, {\rm Co}_1,\, {\rm Co}_3,\, {\rm O'N}.
\]
Here we use {\sc Magma} \cite{magma} and the function \texttt{AutomorphismGroupSimpleGroup} to construct $G$ as a permutation group and we construct a Sylow $p$-subgroup $H_p$ of $G$ for each prime $p \in \pi(G)$. As in the proof of Lemma \ref{l:ex_comp}, we can calculate $\widehat{Q}_p(G)$ for each $p \in \pi(G)$ and then it is easy to verify the inequality
\[
\sum_{p \in \pi(G)}\what{Q}_p(G) < 1.
\]

For $G \in \{ {\rm M}_{22}, {\rm M}_{24}, {\rm Co}_2 \}$ we  compute
\[
Q_2(G) = 1 - \frac{s|H_2|}{|G:H_2|}
\]
precisely, where $s$ is the number of $(H_2,H_2)$ double cosets in $G$ of size $|H_2|^2$, and then it is routine to check that 
\begin{equation}\label{e:est1}
Q_2(G) + \sum_{p \in \pi'(G)}\what{Q}_p(G) < 1,
\end{equation}
where $\pi'(G)$ is the set of odd prime divisors of $|G|$. 

Next suppose $G \in \{ {\rm HN}, {\rm J}_4, {\rm Ly}, {\rm Th}, {\rm Fi}_{24}'\}$. For each $p \in \pi(G)$ we can work with \textsf{GAP} \cite{GAP} and the \textsf{GAP} Character Table Library \cite{GAPCTL} to compute 
\[
a_{p,i} = \min\{|H_p|, |x_i^G \cap L| \,:\, L \in \mathcal{M}_p\}, \;\; b_{p,i} = |x_i^G|
\]
with respect to a complete set $\{x_1, \ldots, x_{k_p}\}$ of representatives of the conjugacy classes in $G$ of elements of order $p$ and a set $\mathcal{M}_p$ of representatives of the conjugacy classes of maximal overgroups of $H_p$ in $G$. To do this, first observe that the character table of $G$ is available in \cite{GAPCTL}. In addition, we can use the \texttt{Maxes} function to access the character table of each maximal subgroup $L$ of $G$, together with the corresponding fusion map from $L$-classes to $G$-classes. This allows us to calculate each term $a_{p,i}$ and $b_{p,i}$ as defined above, and then it is straightforward to check that 
\begin{equation}\label{e:est2}
\sum_{p \in \pi(G)} \what{Q}_p(G) \leqs \sum_{p \in \pi(G)} \sum_{i=1}^{k_p} a_{p,i}^2b_{p,i}^{-1} < 1.
\end{equation}

%
%

To complete the proof, we may assume $G = \mathbb{B}$ or $\mathbb{M}$.
By 
Lemma \ref{l:est} we have
\[
\sum_{p \in \pi'(G)} \widehat{Q}_p(G) < \sum_{p \in \pi'(G)} a_p^2b_p^{-1} < \eta(G),
\]
where $a_p = |H_p|$, $b_p = \min\{|x^G| \,:\, x \in G, \, |x| = p\}$ and we compute
\[
\eta(G) = \begin{cases}
2.4 \times 10^{-7} & \mbox{if $G = \mathbb{B}$} \\
5.7 \times 10^{-11} & \mbox{if $G = \mathbb{M}$,}
\end{cases}
\]
so it suffices to show that $\widehat{Q}_2(G) < 1-\eta(G)$.


The Baby Monster $G = \mathbb{B}$ has four classes of involutions, labelled $\texttt{2A}$, $\texttt{2B}$, $\texttt{2C}$ and $\texttt{2D}$. We can embed $H_2$ in a maximal subgroup $L = [2^{35}].(S_5 \times {\rm L}_3(2))$ and using \cite{GAPCTL} to access the corresponding character tables and fusion map, we compute
\[
\begin{array}{lll}
|\texttt{2A}| = 13571955000 = d_1 & & |\texttt{2A} \cap L| = 51512 = c_1 \\
|\texttt{2B}| = 11707448673375 = d_2 & & |\texttt{2B} \cap L| = 1172575 = c_2 \\
|\texttt{2C}| = 156849238149120000 = d_3 & & |\texttt{2C} \cap L| = 131022848 = c_3 \\
|\texttt{2D}| = 355438141723665000 = d_4 & & |\texttt{2D} \cap L| = 313463400 = c_4.
\end{array}
\]
This gives $\what{Q}_2(G) \leqs \sum_i c_i^2d_i^{-1} < 0.7< 1-\eta(G)$, as required. Similarly, the Monster $G = \mathbb{M}$ has two classes of involutions with
\[
|\texttt{2A}| = 97239461142009186000 = d_1,\;\; |\texttt{2B}| = 5791748068511982636944259375 = d_2.
\]
We can embed $H_2$ in a maximal subgroup $L = 2^{3+6+12+18}.(3S_6 \times {\rm L}_3(2))$ of $G$, noting that \cite[Proposition 3.9]{BOW} gives 
\[
|\texttt{2A} \cap L| = 3573456 = c_1,\;\; |\texttt{2B} \cap L| =  4026530095 = c_2.
\]
This yields $\what{Q}_2(G) \leqs c_1^2d_1^{-1} + c_2^2d_2^{-1} < 1.4 \times 10^{-7}$ and the result follows.
\end{proof}

\begin{rem}\label{r:spor0}
By arguing as in the proof of Theorem \ref{t:spor}, it is straightforward to show that $\a(G)<1$ for every almost simple sporadic group $G$. Therefore, Conjecture \ref{c:ls} holds in this slightly more general setting. In addition, we deduce that if $A$ and $B$ are nilpotent subgroups of $G$, then $A \cap B^x = 1$ for some $x \in G$. To verify the inequality $\a(G)<1$ for an almost simple sporadic group of the form $G = T.2$ with $T$ simple, we use \textsf{GAP} \cite{GAPCTL,GAP} to show that the inequality in \eqref{e:est2} holds for $G \in \{ {\rm HN}.2, {\rm Fi}_{24}'.2\}$. And in each of the remaining cases, we use {\sc Magma} \cite{magma} to check that the bound in \eqref{e:est1} is satisfied.  
\end{rem}

\begin{rem}\label{r:alt0}
Recall that Lisi and Sabatini have shown that the conclusion to Conjecture \ref{c:ls} holds for all alternating groups $G = A_n$ with $n \geqs N$, for some unspecified constant $N$. It remains an open problem to prove Conjecture \ref{c:ls} for all alternating groups, which would complete the proof of the conjecture for all simple groups, in view of Theorem \ref{t:main1}. For $G = A_n$ with $5 \leqs n \leqs 10$, we can use {\sc Magma} \cite{magma} to compute $Q_p(G)$ for every prime $p \in \pi(G)$ and this allows us to verify the bound $\sum_p Q_p(G) <1$. And for $11 \leqs n \leqs 50$ we can calculate $\widehat{Q}_p(G)$ by proceeding as in the proof of Lemma \ref{l:ex_comp} and we find that $\sum_p \widehat{Q}_p(G) <1$ in every case. This means that in order to complete the proof of Conjecture \ref{c:ls}, we may assume $G = A_n$ is an alternating group with $n >50$.
\end{rem}

\vs

By combining Theorem \ref{t:spor} with Theorems \ref{t:ex_main} and \ref{t:class_main}, and referring to Remark \ref{r:u42} for the special case $G = {\rm U}_4(2)$, we deduce that Theorems \ref{t:main1} and \ref{t:main2} hold. Then Corollary \ref{c:main1} follows by combining Theorem \ref{t:main1} with the main theorem of \cite{Kurm}, and we note that Corollary \ref{c:main2} is an immediate consequence. In view of Theorem \ref{t:deb}, Corollary \ref{c:main_asymptotic} follows immediately from Theorem \ref{t:main_asymptotic}. Finally, we combine Theorem \ref{t:main2} with the main theorem of \cite{Z_sym} to deduce Corollary \ref{c:main3}.

\end{document}